\title[Kato and Kuzumaki's properties for $\K$ of $p$-adic function fields]{On Kato and Kuzumaki's properties for the Milnor $\K$ of function fields of $p$-adic curves}
\author{Diego Izquierdo}
\address{CMLS, École polytechnique, F–91128 Palaiseau cedex, France}
\email{diego.izquierdo@polytechnique.edu}
\author{Giancarlo Lucchini Arteche}
\address{Departamento de Matem\'aticas, Facultad de Ciencias, Universidad de Chile, Las Palmeras 3425, \~Nu\~noa, Santiago, Chile}
\email{luco@uchile.cl}
\date{}
\DeclareSymbolFont{cyrletters}{OT2}{wncyr}{m}{n}
\DeclareMathSymbol{\Sha}{\mathalpha}{cyrletters}{"58}
\DeclareMathSymbol{\Brusse}{\mathalpha}{cyrletters}{"42}
\theoremstyle{plain}
\newtheorem{theorem}{Theorem}[section]
\newtheorem{MT}{Main Theorem}
\newtheorem{lemma}[theorem]{Lemma}
\newtheorem{proposition}[theorem]{Proposition}
\newtheorem{corollary}[theorem]{Corollary}
\theoremstyle{definition}
\newtheorem{remark}[theorem]{Remark}
\newtheorem{notation}[theorem]{Notation}
\newcommand{\cal}[1]{\mathcal{#1}}
\newcommand{\bb}[1]{\mathbb{#1}}
\newcommand{\Z}{\bb{Z}}
\newcommand{\Q}{\bb{Q}}
\newcommand{\K}{\mathrm{K}_2}
\newcommand{\im}{\mathrm{im}}
\newcommand{\res}{\mathrm{res}}
\newcommand{\ires}{i_{\mathrm{res}}}
\newcommand{\iram}{i_{\mathrm{ram}}}
\numberwithin{equation}{section}
\begin{document}

\begin{abstract}
Let $K$ be the function field of a curve $C$ over a $p$-adic field $k$. We prove that, for each $n, d \geq 1$ and for each hypersurface $Z$ in $\mathbb{P}^n_{K}$ of degree $d$ with $d^2 \leq n$, the second Milnor $K$-theory group of $K$ is spanned by the images of the norms coming from finite extensions $L$ of $K$ over which $Z$ has a rational point. When the curve $C$ has a point in the maximal unramified extension of $k$, we generalize this result to hypersurfaces $Z$ in $\mathbb{P}^n_{K}$ of degree $d$ with $d \leq n$.\\

\textbf{MSC Classes:} 11E76, 11G20, 12G05, 12G10, 14G05, 14G27, 14J45, 14J70, 19C99, 19F05.

\textbf{Keywords:} Milnor $K$-theory, zero-cycles, Fano hypersurfaces, $p$-adic function fields, $C_i$ property, Galois cohomology, cohomological dimension.
\end{abstract}

\maketitle

\section{Introduction}

In 1986, in the article \cite{KK}, Kato and Kuzumaki stated a set of conjectures which aimed at giving a diophantine characterization of cohomological dimension of fields. For this purpose, they introduced some properties of fields which are variants of the classical $C_i$-property and which involve Milnor $K$-theory and projective hypersurfaces of small degree. They hoped that those properties would characterize fields of small cohomological dimension.\\

More precisely, fix a field $K$ and two non-negative integers $q$ and $i$. Let $\mathrm{K}_q(K)$ be the $q$-th Milnor $K$-group of $K$. For each finite extension $L$ of $K$, one can define a norm morphism $N_{L/K}: \mathrm{K}_q(L)\rightarrow \mathrm{K}_q(K)$ (see Section 1.7 of \cite{Kat}). Thus, if $Z$ is a scheme of finite type over $K$, one can introduce the subgroup $N_q(Z/K)$ of $\mathrm{K}_q(K)$ generated by the images of the norm morphisms $N_{L/K}$ when $L$ runs through the finite extensions of $K$ such that $Z(L) \neq \emptyset$. One then says that the field $K$ is $C_i^q$ if, for each $n \geq 1$, for each finite extension $L$ of $K$ and for each hypersurface $Z$ in $\mathbb{P}^n_{L}$ of degree $d$ with $d^i \leq n$, one has $N_q(Z/L) = \mathrm{K}_q(L)$. For example, the field $K$ is $C_i^0$ if, for each finite extension $L$ of $K$, every hypersurface $Z$ in $\mathbb{P}^n_{L}$ of degree $d$ with $d^i \leq n$ has a 0-cycle of degree 1. The field $K$ is $C_0^q$ if, for each tower of finite extensions $M/L/K$, the norm morphism $N_{M/L}: \mathrm{K}_q(M)\rightarrow \mathrm{K}_q(L)$ is surjective.\\

Kato and Kuzumaki conjectured that, for $i \geq 0$ and $q\geq 0$, a perfect field is $C_i^q$ if, and only if, it is of cohomological dimension at most $i+q$. This conjecture generalizes a question raised by Serre in \cite{GC} asking whether the cohomological dimension of a $C_i$-field is at most $i$. As it was already pointed out at the end of Kato and Kuzumaki's original paper \cite{KK}, Kato and Kuzumaki's conjecture for $i=0$ follows from the Bloch-Kato conjecture (which has been established by Rost and Voevodsky, cf.~\cite{Riou}): in other words, a perfect field is $C_0^q$ if, and only if, it is of cohomological dimension at most $q$. However, it turns out that the conjectures of Kato and Kuzumaki are wrong in general. For example, Merkurjev constructed in \cite{Mer} a field of characteristic 0 and of cohomological dimension 2 which does not satisfy property $C^0_2$. Similarly, Colliot-Th\'el\`{e}ne and Madore produced in \cite{CTM} a field of characteristic 0 and of cohomological dimension 1 which did not satisfy property $C^0_1$. These counter-examples were all constructed by a method using transfinite induction due to Merkurjev and Suslin. The conjecture of Kato and Kuzumaki is therefore still completely open for fields that usually appear in number theory or in algebraic geometry.\\

In 2015, in \cite{Wit}, Wittenberg proved that totally imaginary number fields and $p$-adic fields have the $C_1^1$ property. In 2018, in \cite{diego}, the first author also proved that, given a positive integer $n$, finite extensions of $\mathbb{C}(x_1,\ldots,x_n)$ and of $\mathbb{C}(x_1,\ldots,x_{n-1})((t))$ are $C_i^q$ for any $i,q \geq 0$ such that $i+q =n$. These are essentially the only known cases of Kato and Kuzumaki's conjectures. Note however that a variant of the $C_1^q$-property involving homogeneous spaces under connected linear groups is proved to characterize fields with cohomological dimension at most $q+1$ in \cite{ILA}.\\

In the present article, we are interested in Kato and Kuzumaki's conjectures for the function field $K$ of a smooth projective curve $C$ defined over a $p$-adic field $k$. The field $K$ has cohomological dimension $3$, and hence it is expected to satisfy the $C_i^q$-property for $i+q\geq 3$. As already mentioned, the Bloch-Kato conjecture implies this result when $q\geq 3$. The cases $q=0$ and $q=1$ seem out of reach with the current knowledge, since they likely imply the $C_2^0$-property for $p$-adic fields, which is a widely open question. In this article, we make progress in the case $q=2$.\\

Our first main result is the following:

\begin{MT} \label{MTA}
Function fields of $p$-adic curves satisfy the $C_2^2$-property.
\end{MT}

\noindent Of course, this implies that function fields of $p$-adic curves also satisfy the $C_i^2$-property for each $i \geq 2$. It therefore only remains to prove the $C_1^2$-property. In that direction, we prove the following main result:

\begin{MT} \label{MTB}
Let $K$ be the function field of a smooth projective curve $C$ defined over a $p$-adic field $k$. Assume that $C$ has a point in the maximal unramified extension of $k$. Then, for each $n, d \geq 1$ and for each hypersurface $Z$ in $\mathbb{P}^n_{K}$ of degree $d$ with $d \leq n$, we have  $\K(K)=N_2(Z/K)$.
\end{MT}

\noindent This theorem applies for instance when $K$ is the rational function field $k(t)$ or more generally the function field of a curve that has a rational point.\\

Since the proofs of these theorems are quite involved, we provide here below an outline with some details. Section \ref{sec prel} introduces all the notations and basic definitions we will need in the sequel. In Section \ref{sec C22}, we prove Theorem \ref{corint}, which
widely generalizes Main Theorem \ref{MTA}. Finally, in Section \ref{sec4}, we prove Theorem \ref{endth}, and
its Corollaries \ref{MTBbis} and \ref{MTBbisbis}, which widely generalize Main Theorem \ref{MTB}.

\subsection*{Ideas of proof for Main Theorem \ref{MTA}} Let $K$ be the function field of a smooth projective curve $C$ defined over a $p$-adic field $k$. Take two integers $n, d \geq 1$ such that $d^2\leq n$, a hypersurface $Z$ in $\mathbb{P}^n_{K}$ of degree $d$ and an element $x \in \K(K)$. We want to prove that $x\in N_2(Z/K)$. To do so, we roughly proceed in four steps, that are inspired from the proof of the $C_1^1$ property for number fields in \cite{diego} but that require to deal with several new difficulties:
\begin{enumerate}
    \item \emph{Solve the problem locally:} for each closed point $v$ of $C$, prove that $x\in N_2(Z_{K_v}/K_v)$. This provides $r_v$ finite extensions $M_i^{(v)}/K_v$ such that $Z(M_i^{(v)})\neq \emptyset$ and
    $$x \in\left\langle N_{M_i^{(v)}/K_v}(\K(M_i^{(v)})) \mid 1 \leq i \leq r_v\right\rangle. $$
    \item \emph{Globalize the extensions $M_i^{(v)}/K_v$:} for each closed point $v$ of $C$ and each $1\leq i\leq r_v$, find a finite extension $K_i^{(v)}$ of $K$ contained in $M_i^{(v)}$ such that $Z(K_i^{(v)})\neq \emptyset$. Then prove that there exists a finite subset of these global extensions, say $K_1,\ldots,K_r$, such that for every closed point $v$ of $C$, $x$ lies in the subgroup of $\K(K_v)$ generated by the norms coming from the $(K_i\otimes_K K_v)$'s.
    \item \emph{Establish a local-to-global principle for norm groups:} prove the vanishing of the Tate-Shafarevich group:
    \begin{multline*}
\Sha_2 := \ker \left( \frac{\K(K)}{\left\langle N_{K_i/K}(\K(K_i)) \mid 1 \leq i \leq r \right\rangle} \right. \\ \left. \rightarrow  \prod_{v\in C^{(1)}} \frac{\K(K_v)}{\left\langle N_{K_i\otimes_K K_v/K_v}(\K(K_i\otimes_K K_v)) \mid 1 \leq i \leq r \right\rangle}  \right).
\end{multline*}
\item \emph{Conclude:} By step (2), we have $x\in \Sha_2$. Hence, step (3) implies that:
$$x \in \langle N_{K_i/K}(\K(K_i)) \mid 1 \leq i \leq r \rangle \subset N_2(Z/K), $$
as wished.
\end{enumerate}
Let us now briefly discuss the proofs of Steps (1), (2) and (3). Step (1) can be proved by combining some results for $p$-adic fields due to Wittenberg (\cite{Wit}) and the computation of the groups $\K(K_v)$ thanks to the residue maps in Milnor $K$-theory, cf.~\S\ref{subsubsec local}.

In the way it is written above, Step (2) can be easily deduced from Greenberg's approximation Theorem. However, as we will see below, we will need a stronger version of that step, that will require a completely different proof.

Step (3) is the hardest part of the proof. The first key tool that we use is a Poitou-Tate duality for motivic cohomology over the field $K$ proved by the first author in \cite{diego0}. This provides a finitely generated free Galois module $\hat{T}$ over $K$ such that the Pontryagyn dual of $\Sha_2$ is the quotient of:
$$\Sha^2(K,\hat{T}):=\ker \left( H^2(K,\hat{T})\rightarrow \prod_{v\in C^{(1)}} H^2(K_v,\hat{T}) \right)$$
by its maximal divisible subgroup. Now, a result of Demarche and Wei (\cite{DW}) states that, under some technical linear disjointness assumption for the extensions $K_i^{(v)}/K$, one can find two finite extensions $K'$ and $K''$ of $K$ such that the restriction:
$$\Sha^2(K,\hat{T}) \rightarrow \Sha^2(K',\hat{T})\oplus \Sha^2(K'',\hat{T})$$
is injective and $\hat{T}$ is a permutation Galois module over both $K'$ and $K''$. If the groups $\Sha^2(K',\hat{T})$ and $\Sha^2(K'',\hat{T})$ were trivial, then we would be done. But that is not the case in our context because the $p$-adic function field $K$ has finite extensions $K'$ such that $\Sha^2(K',\mathbb{Z})$ is not trivial (see for instance the appendix of \cite{ctps}). This ``failure of Chebotarev's Density Theorem'' makes the computation of $\Sha^2(K,\hat{T})$ very complicated and technical. By carrying out quite subtle Galois cohomology computations and by using some results of Kato (\cite{Kat}), we prove that, under some technical assumptions on the $K_i^{(v)}$ (see below) and another technical assumption on $C$ (which is trivially satisfied when $C(k)\neq\emptyset$), the group $\Sha^2(K,\hat{T})$ is always divisible, even though it might not be trivial, cf.~\S\ref{subsubsec calcul Sha}. This is enough to apply the Poitou-Tate duality and deduce the vanishing of $\Sha_2$.

Now, in order to ensure that the $K_i^{(v)}$'s and $C$ fulfill the conditions required to carry out the previous argument, we have to:
\begin{itemize}
    \item add a step (0) in which we reduce to the case where $C$ satisfies a technical assumption close to having a rational point; and
    \item modify the constructions of the $K_i^{(v)}$'s in Step (2), which cannot be done anymore by using Greenberg's approximation Theorem.
\end{itemize}
The reduction to the case where $C$ satisfies the required conditions uses the Beilinson-Lichtenbaum conjecture for motivic cohomology and a local-to-global principle due to Kato (cf.~\cite{Kat}) with respect to the places of $K$ that come from a suitable regular model of the curve $C$, cf.~\S\ref{subsubsec resind1}. As for Step (2), we want to construct the $K_i^{(v)}$'s so that they fulfill two extra conditions:
\begin{itemize}
    \item[(a)] one of the $K_i^{(v)}$'s has to be of the form $k_i^{(v)}K$ for some finite unramified extension $k_i^{(v)}/k$. This is achieved by observing that  $Z(k^{\mathrm{nr}}(C))\neq \emptyset$ since the field $k^{\mathrm{nr}}(C)$ is $C_2$ and $Z$ is a hypersurface in $\mathbb{P}^n_K$ of degree $d$ with $d^2 \leq n$.
     \item[(b)] the $K_i^{(v)}$'s have to satisfy some suitable linear disjointness conditions also involving abelian extensions of $K$ that are locally trivial everywhere. This is achieved by an approximation argument that uses the implicit function Theorem for $Z$ over the $K_v$'s, weak approximation and an analogue of Hilbert's irreducibility Theorem for the field $K$, cf.~\S\ref{subsubsec global}.
\end{itemize}
Note that, since we use the implicit function Theorem, the previous argument only works when the hypersurface $Z$ is smooth. We thus need to add an extra step to the proof in which we reduce to that case. This uses a dévissage technique that is due to Wittenberg (\cite{Wit}) and that requires to work with \emph{all} proper varieties over $K$ (instead of only hypersurfaces), cf.~\S\ref{subsubsec conclusion}. For that reason, we need to prove a wide generalization of Main Theorem \ref{MTA} to all proper varieties. This is the object of Theorem \ref{corint} in the core of the text. Of course, this requires to modify and generalize the proofs of Steps (1), (2) and (3) so that they can be applied in that more general setting.

\subsection*{Ideas of proof for Main Theorem \ref{MTB}}

The proof of Main Theorem \ref{MTB} follows by combining Main Theorem \ref{MTA} together with a result roughly stating that every element of $\K(K)$ can be written as a product of norms coming from extensions of the form $k'K$ with $k'$ a finite extension of $k$ whose ramification degree is fixed, cf. Theorem \ref{extell}. The general ideas to prove this last result are similar to (and a bit simpler than) those used in Main Theorem \ref{MTA}.

\subsection*{Acknowledgements.} We thank Jean-Louis Colliot-Thélène, Olivier Wittenberg and an anonymous referee for their comments and suggestions.

The second author's research was partially supported by ANID via FONDECYT Grant 1210010.

\section{Notations and preliminaries}\label{sec prel}

In this section we fix the notations that will be used throughout this article.

\subsection*{Milnor $K$-theory} 
Let $K$ be any field and let $q$ be a non-negative integer. The $q$-th Milnor $\mathrm{K}$-group of $K$ is by definition the group $\mathrm{K}_0(K)=\mathbb{Z}$ if $q =0$ and:
$$\mathrm{K}_q(K):= \underbrace{K^{\times} \otimes_{\mathbb{Z}} \ldots \otimes_{\mathbb{Z}} K^{\times}}_{q \text{ times}} / \left\langle x_1 \otimes \ldots \otimes x_q | \exists i,j, i\neq j, x_i+x_j=1 \right\rangle$$
if $q>0$. For $x_1,\ldots,x_q \in K^{\times}$, the symbol $\{x_1,\ldots,x_q\}$ denotes the class of $x_1 \otimes \cdots \otimes x_q$ in $\mathrm{K}_q(K)$. More generally, for $r$ and $s$ non-negative integers such that $r+s=q$, there is a natural pairing:
$$\mathrm{K}_r(K) \times \mathrm{K}_s(K) \rightarrow \mathrm{K}_q(K)$$
which we will denote $\{\cdot, \cdot\}$.\\

When $L$ is a finite extension of $K$, one can construct a norm homomorphism
\[N_{L/K}: \mathrm{K}_q(L) \rightarrow \mathrm{K}_q(K),\]
satisfying the following properties (see Section 1.7 of \cite{Kat} or Section 7.3 of \cite{GS}):
\begin{itemize}
\item[$\bullet$] For $q=0$, the map $N_{L/K}: \mathrm{K}_0(L) \rightarrow \mathrm{K}_0(K)$ is given by multiplication by $[L:K]$.
\item[$\bullet$] For $q=1$, the map $N_{L/K}: \mathrm{K}_1(L) \rightarrow \mathrm{K}_1(K)$ coincides with the usual norm $L^{\times} \rightarrow K^{\times}$.
\item[$\bullet$] If $r$ and $s$ are non-negative integers such that $r+s=q$, we have $N_{L/K}(\{x,y\})=\{x,N_{L/K}(y)\}$ for $x \in \mathrm{K}_r(K)$ and $y\in \mathrm{K}_s(L)$.
\item[$\bullet$] If $M$ is a finite extension of $L$, we have $N_{M/K} = N_{L/K} \circ N_{M/L}$.
\end{itemize}
Recall also that Milnor $K$-theory is endowed with residue maps (see Section 7.1 of \cite{GS}). Indeed, when $K$ is a henselian discrete valuation field with ring of integers $R$, maximal ideal $\mathfrak{m}$ and residue field $\kappa$, there exists a unique residue morphism:
$$\partial: \mathrm{K}_q(K) \rightarrow \mathrm{K}_{q-1}(\kappa)$$
such that, for each uniformizer $\pi$ and for all units $u_2,\ldots,u_q\in R^{\times}$ whose images in $\kappa$ are denoted $\overline{u_2},\ldots,\overline{u_q}$, one has:  $$\partial (\{\pi,u_2,\ldots,u_q\})=\{ \overline{u_2},\ldots,\overline{u_q}\}.$$
The kernel of $\partial$ is the subgroup $U_q(K)$ of $\mathrm{K}_q(K)$ generated by symbols of the form $\{x_1,\ldots,x_q\}$ with $x_1,\ldots,x_q \in R^\times$. If $U_q^1(K)$ stands for the subgroup of $\mathrm{K}_q(K)$ generated by those symbols that lie in $U_q(K)$ and that are of the form $\{x_1,\ldots,x_q\}$ with $x_1 \in 1+\mathfrak{m}$ and $x_2,\ldots,x_q \in K^{\times}$, then $U_q^1(K)$ is $\ell$-divisible for each prime $\ell$ different from the characteristic of $\kappa$ and  $U_q(K)/U_q^1(K)$ is canonically isomorphic to $\mathrm{K}_q(\kappa)$. Moreover, if $L/K$ is a finite extension with ramification degree $e$ and residue field $\lambda$, then the norm map $N_{L/K}:\mathrm{K}_q(L)\rightarrow \mathrm{K}_q(K)$ sends $U_q(L)$ to $U_q(K)$ and $U_q^1(L)$ to $U_q^1(K)$, and the following diagrams commute:
\begin{equation}\label{eqn diagr norm K-th}
\xymatrix{
    \mathrm{K}_q(L)/U_q(L) \ar[r]_-{\cong}^-{\partial_L} \ar[d]^{N_{L/K}} & \mathrm{K}_{q-1}(\lambda)\ar[d]^{N_{\lambda/\kappa}} &U_q(L)/U^1_q(L) \ar[r]_-{\cong} \ar[d]^{N_{L/K}} & \mathrm{K}_{q}(\lambda)\ar[d]^{eN_{\lambda/\kappa}}\\
    \mathrm{K}_q(K)/U_q(K) \ar[r]_-{\cong}^-{\partial_K} & \mathrm{K}_{q-1}(\kappa), & U_q(K)/U^1_q(K) \ar[r]_-{\cong} & \mathrm{K}_{q}(\kappa).}
\end{equation}

\subsection*{The $C_i^q$ properties}
Let $K$ be a field and let $i$ and $q$ be two non-negative integers. For each $K$-scheme $Z$ of finite type, we denote by $N_q(Z/K)$ the subgroup of $\mathrm{K}_q(K)$ generated by the images of the maps $N_{L/K}: \mathrm{K}_q(L) \rightarrow \mathrm{K}_q(K)$ when $L$ runs through the finite extensions of $K$ such that $Z(L)\neq \emptyset$. The field $K$ is said to have the $C_i^q$ property if, for each $n \geq 1$, for each finite extension $L$ of $K$ and for each hypersurface $Z$ in $\mathbb{P}^n_{L}$ of degree $d$ with $d^i \leq n$, one has $N_q(Z/L) = \mathrm{K}_q(L)$.

\subsection*{Motivic complexes}
Let $K$ be a field. For $i \geq 0$, we denote by $z^i(K, \cdot)$ Bloch’s cycle complex defined in \cite{Bloch}. The étale motivic complex $\mathbb{Z}(i)$ over $K$ is then defined as the complex of Galois modules $z^i(-,\cdot)[-2i]$. By the Nesterenko-Suslin-Totaro Theorem and the Beilinson-Lichtenbaum Conjecture, it is known that :
\begin{equation}\label{nesterenko}
H^i(K,\mathbb{Z}(i)) \cong \mathrm{K}_i(K),
\end{equation}
and
\begin{equation}\label{beilinsonlicht}
H^{i+1}(K,\mathbb{Z}(i)) =0,
\end{equation}
for all $i\geq 0$. Statement \eqref{nesterenko} was originally proved in \cite{NS} and \cite{Tot}, and statement \eqref{beilinsonlicht} was deduced from the Bloch-Kato conjecture in \cite{SV}, \cite{GL2} and \cite{GL}. The Bloch-Kato conjecture itself was proved in \cite{SJ} and \cite{Voe}. For the convenience of the reader, we also provide more tractable references: statement \eqref{nesterenko} follows from Theorem 5.1 of \cite{Haewei} and Theorem 1.2(2) of \cite{Gei}, and statement \eqref{beilinsonlicht} can be deduced from the Bloch-Kato conjecture as explained in Lemma 1.6 and Theorem 1.7 of \cite{Haewei}.

\subsection*{Fields of interest}
From now on and until the end of the article, $p$ stands for a prime number and $k$ for a $p$-adic field with ring of integers $\cal O_k$. We let $C$ be a smooth projective geometrically integral curve over $k$, and we let $K$ be its function field. We denote by $C^{(1)}$ the set of closed points in $C$. The residual index $\ires(C)$ of $C$ is defined to be the g.c.d. of the residual degrees of the $k(v)/k$ with $v\in C^{(1)}$. The ramification index $\iram(C)$ of $C$ is defined to be the g.c.d. of the ramification degrees of the $k(v)/k$ with $v\in C^{(1)}$. 

\subsection*{Tate-Shafarevich groups}
When $M$ is a complex of Galois modules over $K$ and $i \geq 0$ is an integer, we define the $i$-th Tate-Shafarevich group of $M$ as:
$$\Sha^i(K,M):=\ker \left( H^i(K,M) \rightarrow \prod_{v\in C^{(1)}} H^i(K_v,M) \right).$$
When a suitable regular model $\mathcal{C}/\cal O_k$ of $C/k$ is given, we also introduce the following smaller Tate-Shafarevich groups:
$$\Sha^i_{\mathcal{C}}(K,M):=\ker \left( H^i(K,M) \rightarrow \prod_{v\in \mathcal{C}^{(1)}} H^i(K_v,M) \right),$$
where $\mathcal{C}^{(1)}$ is the set of codimension $1$ points of $\mathcal{C}$.

\subsection*{Poitou-Tate duality for motivic cohomology}
We recall the Poitou-Tate duality for motivic complexes over the field $K$ (Theorem 0.1 of \cite{diego0} in the case $d=1$). Let $\hat{T}$ be a finitely generated free Galois module over $K$. Set $\check{T}:=\mathrm{Hom}(\hat{T},\mathbb{Z})$ and $T=\check{T}\otimes \mathbb{Z}(2)$. Then there is a perfect pairing of finite groups:
\begin{equation}\label{PT}
    \overline{\Sha^2(K,\hat{T})} \times \Sha^3(K,T) \rightarrow \mathbb{Q}/\mathbb{Z},
\end{equation}
where $\overline{A}$ denotes the quotient of $A$ by its maximal divisible subgroup.

Note that, in the case $\hat{T}=\mathbb{Z}$, the Beilinson-Lichtenbaum conjecture \eqref{beilinsonlicht} implies the vanishing of $\Sha^3(K,\mathbb{Z}(2))$ and hence the group $\Sha^2(K, \mathbb{Z})$ is divisible. By Shapiro's Lemma, the same holds for the group $\Sha^2(K, \mathbb{Z}[E/K])$ for every étale $K$-algebra $E$.

\section{On the $C_2^2$-property for $p$-adic function fields}\label{sec C22}

The goal of this section is to prove the following theorem:

\begin{theorem}\label{corint}
Let $l/k$ be a finite unramified extension and set $L:=lK$. Let $Z$ be a proper $K$-variety. Then the quotient:
$$\K(K)/\langle N_{L/K}(\K(L)), N_2(Z/K) \rangle$$
is $\chi_K(Z,E)^2$-torsion for each coherent sheaf $E$ on $Z$.
\end{theorem}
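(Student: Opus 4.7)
The plan is to execute the four-step strategy sketched in the introduction, formulated for arbitrary proper $K$-varieties and arbitrary coherent sheaves $E$. Working with all coherent sheaves, rather than only $\cal{O}_Z$, is precisely what powers a dévissage à la Wittenberg: short exact sequences $0\to E'\to E\to E''\to 0$ are compatible with the additivity of $\chi_K(Z,-)$, so one may filter a general coherent sheaf by pieces supported on smooth subvarieties and thereby reduce the theorem to the case where $Z$ is smooth and projective. The dimension induction also allows one to replace $Z$ by suitable hyperplane sections when necessary.

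Once $Z$ is smooth, the first substantive step is local: for each $v\in C^{(1)}$, one would show that
\[
\K(K_v)/\langle N_{lK_v/K_v}(\K(lK_v)),\,N_2(Z_{K_v}/K_v)\rangle
\]
is $\chi_K(Z,E)^2$-torsion. The residue exact sequence and the compatibility diagrams \eqref{eqn diagr norm K-th} reduce this to analogous statements over the residue field $\kappa(v)$ (a local field) and $\K(\kappa(v))$, where Wittenberg's proof of the $C_1^1$-property for $p$-adic fields, coupled with a Riemann--Roch computation over $K_v$ to extract the Euler characteristic, does the job. Next, one globalizes: for each $v$, produce finite extensions $K_1^{(v)},\dots,K_{r_v}^{(v)}$ of $K$ with $Z(K_i^{(v)})\neq\emptyset$, one of which has the special form $k_i^{(v)}K$ for $k_i^{(v)}/k$ unramified, enforcing linear disjointness of the $K_i^{(v)}/K$ with respect to abelian extensions of $K$ that split everywhere locally. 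The implicit function theorem on the smooth locus of $Z_{K_v}$, weak approximation on $C$, and a Hilbert irreducibility analogue for $K$ supply the construction. A compactness argument on $C$ then yields a single finite list $K_1,\dots,K_r$ simultaneously handling every $v$.

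The heart of the proof, and its main obstacle, is the Tate--Shafarevich step. After an initial reduction (via the Beilinson--Lichtenbaum conjecture \eqref{beilinsonlicht} and a Kato-style local-to-global principle over a regular model $\cal{C}/\cal{O}_k$ of $C$) to the case where $C$ satisfies a technical condition close to possessing a rational point, one must show that
\[
\Sha_2 := \ker\left(\frac{\K(K)}{\langle N_{K_i/K}(\K(K_i)),\,N_{L/K}(\K(L))\rangle} \to \prod_{v\in C^{(1)}}\frac{\K(K_v)}{\langle N_{K_i\otimes_K K_v/K_v}(\K(K_i\otimes_K K_v)),\,N_{lK_v/K_v}(\K(lK_v))\rangle}\right)
\]
is trivial (or at worst $\chi_K(Z,E)^2$-torsion, absorbed by the local step). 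Via the Poitou--Tate pairing \eqref{PT}, the Pontryagin dual of $\Sha_2$ is $\Sha^2(K,\hat{T})/\text{div}$ for a finitely generated free Galois module $\hat{T}$ built from the $K_i$'s and $L$. Demarche--Wei's theorem reduces the computation to $\Sha^2(K',\hat{T})\oplus \Sha^2(K'',\hat{T})$ over two auxiliary finite extensions $K',K''/K$ where $\hat{T}$ becomes a permutation module.

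The hard technical point is that, unlike in the number field case, $\Sha^2(K',\bb{Z})$ need \emph{not} vanish for finite extensions $K'$ of a $p$-adic function field, reflecting the failure of Chebotarev density. However, Beilinson--Lichtenbaum \eqref{beilinsonlicht} combined with Shapiro's lemma ensures that $\Sha^2(K',\bb{Z}[E'/K'])$ is always \emph{divisible} for any étale $K'$-algebra $E'$. Delicate Galois cohomology computations, exploiting the linear disjointness conditions imposed in the globalization step, the auxiliary unramified extension $k_i^{(v)}K$, the technical condition secured for $C$, and Kato's local-global results, propagate this divisibility to $\Sha^2(K,\hat{T})$. Poitou--Tate then forces $\Sha_2$ to be divisible, which combined with the local step's $\chi_K(Z,E)^2$-torsion bound yields the desired conclusion that $\K(K)/\langle N_{L/K}(\K(L)),\,N_2(Z/K)\rangle$ is killed by $\chi_K(Z,E)^2$.
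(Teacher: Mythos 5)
Your proposal captures the overall architecture of the paper's proof --- Step 0 (motivic interpretation via Lemma \ref{abstractlemma}), Step 1 (reduction to residual index $1$), Step 2 (local solution), Step 3 (globalization with linear disjointness), Step 4 (Tate--Shafarevich computation), Step 5 (Poitou--Tate conclusion for smooth $Z$), Step 6 (dévissage via Hironaka/Gabber--de Jong to drop smoothness) --- and your identification of the key tools (residue maps, Wittenberg's results over local fields, Hilbert irreducibility for the approximation, Demarche--Wei, the divisibility of $\Sha^2(K,\mathbb{Z}[E/K])$ from Beilinson--Lichtenbaum) is correct. However, your accounting of where the exponent $2$ in $\chi_K(Z,E)^2$ arises is wrong in a way that reveals a genuine conceptual gap.

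You claim the local step yields that $\K(K_v)/\langle N_{lK_v/K_v}(\K(lK_v)), N_2(Z_{K_v}/K_v)\rangle$ is $\chi_K(Z,E)^2$-torsion, and that Poitou--Tate then ``forces $\Sha_2$ to be divisible'' (hence trivial, since it is of finite exponent). Both claims are false. The local result (the paper's Proposition \ref{proploc}, following Wittenberg) gives only $\chi_K(Z,E)$-torsion, with a single power of $\chi$. More importantly, the central difficulty of the global step is precisely that one \emph{cannot} show $\Sha^2(K,\hat{T})$ is divisible for general $\chi$: the paper's Proposition \ref{shadiv} only establishes that $\overline{\Sha^2(K,\hat{T})}$ (the quotient by its maximal divisible subgroup) is $\chi_K(Z,E)$-torsion. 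Dually, $\Sha^3(K,T)$ --- which is your $\Sha_2$ --- is therefore only $\chi_K(Z,E)$-torsion, not zero. The correct mechanism is: the local step places $\chi_K(Z,E)\cdot x$ into the (possibly nontrivial, but $\chi$-torsion) group $\Sha^3(K,T)$, whence $\chi_K(Z,E)^2\cdot x = 0$. This is why the theorem needs a square; it is a genuine loss caused by the failure of the local-global principle, not a product of two artifacts. Your reasoning, if it worked, would prove the stronger single-power statement (and for $\chi>1$, that stronger statement is not known and is not proved here). In the hypersurface case of Main Theorem \ref{MTA} one has $\chi=1$, and then divisibility/triviality do hold; the paper's introduction speaks of divisibility in that special context, which may be the source of the confusion, but Theorem \ref{corint} is for arbitrary proper $Z$, where only the torsion bound is available.

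A secondary, less serious issue: your description of the dévissage as ``filtering a coherent sheaf by pieces supported on smooth subvarieties'' is not the actual mechanism. The paper's Proposition \ref{dev} (a variant of Wittenberg's Proposition 2.1) reduces to smooth integral $Z$ via resolution of singularities and a coprimality condition on Euler characteristics of generic fibres; the role of coherent sheaves is to propagate the bound from $\mathcal{O}_Z$ to all $E$ via additivity, after the torsion bound for smooth varieties has already been established. The exponent $r=2$ is carried through the dévissage unchanged; it does not originate there.
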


Here, $\chi_K(Z,E)$ denotes the Euler characteristic of $E$ over $Z$. Main Theorem \ref{MTA} can be deduced as a very particular case of Theorem \ref{corint}, in which this characteristic is trivial. We explain this at the end of the section.

\subsection{Proof of Theorem \ref{corint}}

\subsubsection{Step 0: Interpreting norms in Milnor $K$-theory in terms of motivic cohomology}\label{subsubsec trad Sha}

The following lemma, which will be extensively used in the sequel, allows to interpret quotients of $\K(K)$ by norm subgroups as twisted motivic cohomology groups.

\begin{lemma}\label{abstractlemma}
Let $L$ be a field and let $L_1,\ldots,L_r$ be finite separable extensions of $L$. Consider the étale $L$-algebra $E:=\prod_{i=1}^r L_i$ and let $\check{T}$ be the Galois module defined by the following exact sequence:
\begin{equation}\label{seqabs}
    0 \rightarrow \check{T} \rightarrow \mathbb{Z}[E/L] \rightarrow \mathbb{Z} \rightarrow 0.
\end{equation}
Then:
$$H^3(L,\check{T}\otimes \mathbb{Z}(2)) \cong \K(L)/\langle N_{L_i/L}(\K(L_i)) \; | \; 1\leq i \leq r \rangle .$$
\end{lemma}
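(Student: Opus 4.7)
The plan is to tensor the defining short exact sequence \eqref{seqabs} with the motivic complex $\mathbb{Z}(2)$ and extract $H^3(L,\check T\otimes\mathbb Z(2))$ from the resulting hypercohomology long exact sequence. Since $\mathbb Z(2)$ is a complex of free abelian groups, the tensor product remains a short exact sequence of complexes of Galois modules, and the relevant four-term portion of the long exact sequence reads
$$H^2(L,\mathbb Z[E/L]\otimes\mathbb Z(2))\xrightarrow{\;\phi\;} H^2(L,\mathbb Z(2))\rightarrow H^3(L,\check T\otimes\mathbb Z(2))\rightarrow H^3(L,\mathbb Z[E/L]\otimes\mathbb Z(2)).$$

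Next I would identify each outer term. Writing $\mathbb Z[E/L]\cong\bigoplus_{i=1}^r \mathbb Z[G_L/G_{L_i}]$ and applying Shapiro's lemma yields $H^j(L,\mathbb Z[E/L]\otimes\mathbb Z(2))\cong\bigoplus_i H^j(L_i,\mathbb Z(2))$ for every $j$. By \eqref{nesterenko} we have $H^2(F,\mathbb Z(2))=\K(F)$ for $F=L$ or $F=L_i$, and by the Beilinson--Lichtenbaum input \eqref{beilinsonlicht} we have $H^3(F,\mathbb Z(2))=0$ in both cases. The displayed sequence therefore collapses to
$$\bigoplus_{i=1}^r \K(L_i)\xrightarrow{\;\phi\;}\K(L)\rightarrow H^3(L,\check T\otimes\mathbb Z(2))\rightarrow 0.$$

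The one point that is not formal is the identification of $\phi$ on the $i$-th summand with the Milnor norm $N_{L_i/L}$. This amounts to two compatibilities: first, the map induced on cohomology by the augmentation $\mathbb Z[G_L/G_{L_i}]\to\mathbb Z$ corresponds, via Shapiro's isomorphism, to the corestriction $\mathrm{cor}_{L_i/L}\colon H^2(L_i,\mathbb Z(2))\to H^2(L,\mathbb Z(2))$, which is the standard description of the Shapiro transfer; second, under the Nesterenko--Suslin--Totaro isomorphism \eqref{nesterenko} this corestriction agrees with the norm $N_{L_i/L}$ of Milnor $K$-theory recalled in Section~\ref{sec prel}. Granting these two compatibilities, the cokernel of $\phi$ is exactly $\K(L)/\langle N_{L_i/L}(\K(L_i))\mid 1\le i\le r\rangle$, as desired.

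The main obstacle, such as it is, lies in citing (or verifying) the second compatibility above — the equality, for a finite separable extension, of the Galois-cohomological corestriction with the Milnor-theoretic norm in weight two. Everything else is a direct assembly of the hypercohomology long exact sequence, Shapiro's lemma, and the two deep inputs \eqref{nesterenko} and \eqref{beilinsonlicht} already quoted in the preliminaries.
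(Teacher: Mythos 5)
Your proposal is correct and follows the paper's argument essentially verbatim: tensor \eqref{seqabs} with $\mathbb{Z}(2)$, take the long exact sequence in cohomology, apply Shapiro's lemma together with \eqref{nesterenko} and \eqref{beilinsonlicht}, and identify the first map with the product of norms. The only difference is that you explicitly flag the compatibility of the Shapiro corestriction in weight-two motivic cohomology with the Milnor norm, a point the paper silently takes as known.
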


\begin{proof}
Exact sequence \eqref{seqabs} induces a distinguished triangle:
$$\check{T}\otimes \mathbb{Z}(2) \rightarrow \mathbb{Z}[E/L]\otimes \mathbb{Z}(2) \rightarrow \mathbb{Z}(2) \rightarrow \check{T}\otimes \mathbb{Z}(2)[1].$$
By taking cohomology, we get an exact sequence:
$$H^2(L,\mathbb{Z}[E/L]\otimes \mathbb{Z}(2))) \rightarrow H^2(L,\mathbb{Z}(2)) \rightarrow H^3(L,\check{T}\otimes \mathbb{Z}(2)) \rightarrow H^3(L,\mathbb{Z}[E/L]\otimes \mathbb{Z}(2)).$$
By Shapiro's Lemma, we have: 
\begin{gather*}
    H^2(L,\mathbb{Z}[E/L]\otimes \mathbb{Z}(2)))\cong H^2(E,\mathbb{Z}(2)),\\ H^3(L,\mathbb{Z}[E/L]\otimes \mathbb{Z}(2)))\cong H^3(E,\mathbb{Z}(2)).
\end{gather*} 
Moreover, as recalled in section \ref{sec prel}, the Nesterenko-Suslin-Totaro Theorem and the Beilinson-Lichtenbaum conjecture give the following isomorphisms:
\begin{gather*}
    H^2(L,\mathbb{Z}(2))\cong \K(L),\\ 
    H^2(E,\mathbb{Z}(2)) \cong \prod_{i=1}^r \K(L_i),\\
    H^3(E,\mathbb{Z}(2))=0.
\end{gather*} 
 We therefore get an exact sequence:
$$\prod_{i=1}^r \K(L_i) \rightarrow \K(L) \rightarrow H^3(L,\check{T}\otimes \mathbb{Z}(2)) \rightarrow 0,$$
in which the first map is the product of the norms. 
\end{proof}

\subsubsection{Step 1: Reducing to curves with residual index $1$}\label{subsubsec resind1}

In this step, we prove the following proposition, that allows to reduce to the case when the curve $C$ has residual index $1$:

\begin{proposition}\label{resid}
Let $k'/k$  be the unramified extension of $k$ of degree $\ires(C)$ and set $K':=k'K$. Then the norm morphism $N_{K'/K}: \K(K')\rightarrow \K(K)$ is surjective.
\end{proposition}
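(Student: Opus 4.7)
The plan is to translate the surjectivity claim into a motivic-cohomology vanishing statement via Lemma~\ref{abstractlemma} and then tackle it using Poitou--Tate duality and a divisibility analysis of a suitable Tate--Shafarevich group.

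First, I would apply Lemma~\ref{abstractlemma} with $r=1$ and $L_1=K'$. Writing $\check{T}$ for the augmentation kernel $\ker(\mathbb{Z}[K'/K]\to\mathbb{Z})$, the proposition becomes equivalent to the vanishing of $H^3(K,\check{T}\otimes\mathbb{Z}(2))$. The key geometric input is that, by the very definition of $\ires(C)$, the integer $n:=\ires(C)=[k':k]$ divides the residue degree $[k(v):k]$ of every $v\in C^{(1)}$; since $k'/k$ is the unique unramified extension of $k$ of degree $n$, this forces $k'\subseteq k(v)$ for every such $v$. Combined with the fact that $K'/K$ is unramified everywhere, each $v\in C^{(1)}$ then splits completely in $K'/K$, which gives $K'\otimes_K K_v\cong K_v^{\,n}$, makes $\check{T}|_{G_{K_v}}$ a trivial free $G_{K_v}$-module of rank $n-1$, and hence yields
$$H^3(K_v,\check{T}(2))\cong H^3(K_v,\mathbb{Z}(2))^{n-1}=0$$
by~\eqref{beilinsonlicht}. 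Therefore $H^3(K,\check{T}(2))=\Sha^3(K,\check{T}(2))$.

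Next I would invoke the Poitou--Tate pairing~\eqref{PT}: since $\Sha^3(K,\check{T}(2))$ is Pontryagin-dual to $\overline{\Sha^2(K,\hat{T})}$, the problem reduces to showing that the group $\Sha^2(K,\hat{T})$ is divisible. To this end I would use the $\mathbb{Z}$-dual short exact sequence
$$0\to\mathbb{Z}\to\mathbb{Z}[K'/K]\to\hat{T}\to 0$$
and examine the associated long exact sequences in Galois cohomology, globally and locally at each $v\in C^{(1)}$. By the closing remark of Section~\ref{sec prel} and Shapiro's Lemma, both $\Sha^2(K,\mathbb{Z})$ and $\Sha^2(K,\mathbb{Z}[K'/K])\cong\Sha^2(K',\mathbb{Z})$ are divisible. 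A careful snake-lemma-type diagram chase then exhibits $\Sha^2(K,\hat{T})$ as an extension of a subgroup of $\Sha^3(K,\mathbb{Z})=\Sha(\mathrm{Br}(K))$ by a divisible group, so everything eventually boils down to checking that this image lies in the maximal divisible subgroup of $\Sha(\mathrm{Br}(K))$.

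This last step is the main technical obstacle. I would handle it by switching to a suitable regular proper model $\mathcal{C}/\mathcal{O}_k$ of $C$ and invoking Kato's local-global results from~\cite{Kat}: since $k'/k$ is unramified, the base change $\mathcal{C}\times_{\mathcal{O}_k}\mathcal{O}_{k'}\to\mathcal{C}$ is étale, so $K'/K$ is unramified at every codimension-1 point of $\mathcal{C}$, horizontal or vertical. Kato's Hasse-principle theorems for $H^3(K,\mathbb{Q}/\mathbb{Z}(2))$, applied together with residue computations at the vertical places and the already-established splitting at the horizontal ones, then force the obstructing Brauer classes to lie in the maximal divisible subgroup of $\Sha(\mathrm{Br}(K))$. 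Combined with the preceding paragraph, this yields the divisibility of $\Sha^2(K,\hat{T})$ and completes the proof.
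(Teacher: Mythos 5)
Your plan diverges from the paper's proof in one crucial way, and the divergence opens a real gap. The paper never invokes Poitou--Tate duality here. Instead, it observes that because $K'/K$ is \emph{cyclic}, the augmentation kernel $\check{T}$ is isomorphic to the coaugmentation cokernel: there is a \emph{second} exact sequence $0\to\mathbb{Z}\to\mathbb{Z}[K'/K]\to\check{T}\to 0$ (the map $\mathbb{Z}[K'/K]\to\check T$ sends $s\mapsto s-1$). Tensoring this sequence with $\mathbb{Z}(2)$ and using $H^3(K',\mathbb{Z}(2))=0$ (Beilinson--Lichtenbaum) gives an \emph{inclusion} $\Sha^3_{\mathcal C}(K,\check T(2))\hookrightarrow\Sha^4_{\mathcal C}(K,\mathbb{Z}(2))\cong\Sha^3_{\mathcal C}(K,\mathbb{Q}/\mathbb{Z}(2))$, and the latter vanishes by Proposition~5.2 of~\cite{Kato}. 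Combined with the total splitting of $K'/K$ at \emph{every} point of $\mathcal C^{(1)}$ (not just $C^{(1)}$ --- and establishing this for the vertical places is genuinely nontrivial, needing Lang--Weil and Lemma~4.6 of~\cite{Wit}), this directly gives $\K(K)/\mathrm{im}(N_{K'/K})=\Sha^3_{\mathcal C}(K,\check T(2))=0$. No duality and no analysis of $\Sha^2(K,\hat T)$ are needed.

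Your route instead reduces everything to proving that $\Sha^2(K,\hat T)$ is divisible, and this is precisely where the proposal stops being a proof. The final paragraph is a plan, not an argument: it is not shown that the image of $\Sha^2(K,\hat T)\to\Sha^3(K,\mathbb{Z})$ lands in the maximal divisible subgroup, and the identification $\Sha^3(K,\mathbb{Z})=\Sha(\mathrm{Br}(K))$ is incorrect ($\Sha^3(K,\mathbb{Z})\cong\Sha^2(K,\mathbb{Q}/\mathbb{Z})$, which is not the Brauer--group Tate--Shafarevich group since $\mathbb{Q}/\mathbb{Z}$ and $\mathbb{Q}/\mathbb{Z}(1)$ differ as Galois modules over $K$). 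Moreover, Kato's Hasse principle is a statement about $\Sha^3_{\mathcal C}(K,\mathbb{Q}/\mathbb{Z}(2))$ --- over the full set of codimension-one points of a regular model --- so invoking it requires precisely the control at vertical places that you gesture at (``residue computations at the vertical places'') but do not carry out; this is the part of the paper's argument that is actually delicate. In short: the local splitting at $C^{(1)}$ and the translation into motivic cohomology are correct and match the paper, but the key trick --- the second exact sequence available because $K'/K$ is cyclic --- is missing from the proposal, and without it the crucial divisibility of $\Sha^2(K,\hat T)$ is left unproven.
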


\begin{proof}
Consider the Galois module $\check{T}$ defined by the following exact sequence:
$$0 \rightarrow \check{T}\rightarrow \mathbb{Z}[K'/K] \rightarrow \mathbb{Z}  \rightarrow 0,$$
Since $K'/K$ is cyclic, a $\Z$-basis of $\check T$ is given by $s^\alpha-s^{\alpha-1}$ with $s$ a generator of $\mathrm{Gal}(K'/K)$ and $1\leq \alpha\leq \ires(C)-1$. Then the arrow $\mathbb{Z}[K'/K] \rightarrow  \check{T}$ that sends $s$ to $s-1$ gives rise to an exact sequence:
$$0 \rightarrow  \mathbb{Z}\rightarrow \mathbb{Z}[K'/K] \rightarrow  \check{T} \rightarrow 0,$$
and hence to a distinguished triangle:
$$ \mathbb{Z}(2) \rightarrow \mathbb{Z}[K'/K]\otimes \mathbb{Z}(2) \rightarrow  \check{T}\otimes\mathbb{Z}(2)  \rightarrow \mathbb{Z}(2)[1].$$
By the Beilinson-Lichtenbaum conjecture, the group $H^3(K',\mathbb{Z}(2))$ is trivial. Hence we get an inclusion:
$$\Sha^3_{\mathcal{C}}(K,\check{T}\otimes \mathbb{Z}(2))\subseteq \Sha^4_{\mathcal{C}}(K,\mathbb{Z}(2)),$$
where $\mathcal{C}$ is a fixed regular, proper and flat model of $C$ whose reduced special fiber $C_0$ is a strict normal crossing divisor.
Now, the distinguished triangle
\[\Z(2)\to \Q(2)\to \Q/\Z(2)\to \Z(2)[1],\]
and the vanishing of the groups $H^3(K,\Q(2))$ and $H^4(K,\Q(2))=0$ (which follow from Lemma 2.5 and Theorem 2.6.c of \cite{kahn}) give rise to an isomorphism
\[\Sha^3_{\mathcal{C}}(K,\Q/\Z(2))\cong \Sha^4_{\mathcal{C}}(K,\mathbb{Z}(2)),\]
and by Proposition 5.2 of \cite{Kato}, the group on the left is trivial, and hence so is the former group.

Now observe that, by Lemma \ref{abstractlemma}, we have:
$$\Sha^3_{\mathcal{C}}(K,\check{T}\otimes \mathbb{Z}(2)) \cong \ker \left( \K(K)/\mathrm{im}(N_{K'/K}) \rightarrow \prod_{v\in \mathcal{C}^{(1)}} \K(K_v)/\mathrm{im}(N_{K'_v/K_v}) \right).$$ 
We claim that the extension $K'/K$ totally splits at each place $v\in \mathcal{C}^{(1)}$. From this, we deduce that:
$$0=\Sha^3_{\mathcal{C}}(K,\check{T}\otimes \mathbb{Z}(2))\cong \K(K)/\mathrm{im}(N_{K'/K}),$$
and hence the norm morphism $N_{K'/K}: \K(K')\rightarrow \K(K)$ is surjective.

It remains to check the claim. It is obviously satisfied for $v\in C^{(1)}$, so we may and do assume $v \in \mathcal{C}^{(1)} \setminus C^{(1)}$. If $\kappa$ and $\kappa'$ denote the residue fields of $k$ and $k'$, we then have to prove that all the irreducible components of $C_0$ are $\kappa'$-curves. To do so, consider an infinite sequence of finite unramified field extensions $k=k_0 \subset k_1 \subset k_2 \subset \ldots $ all with degrees prime to $[k':k]$ and denote by $\kappa=\kappa_0 \subset \kappa_1 \subset \kappa_2 \subset \ldots$ the corresponding residue fields. Let $k_{\infty}$ (resp. $\kappa_{\infty}$) be the union of all the $k_i$'s (resp. $\kappa_i$'s). Since $\kappa_\infty$ is infinite, Lemma 4.6 of \cite{Wit} and the definition of $\ires(C)$ imply that each irreducible component of $C_0 \times_{\kappa_0} \kappa_\infty$ has index divisible by $[k':k]$. Hence the same is true for all the irreducible components of $C_0$. But recall that, by the Lang-Weil estimates, any smooth geometrically integral variety defined over a finite field has a zero-cycle of degree $1$. We deduce that the irreducible components of $C_0$ are $\kappa'$-curves. 
\end{proof}

\subsubsection{Step 2: Solving the problem locally}\label{subsubsec local}

In this step, we prove that the analogous statement to Theorem \ref{corint} over the completions of $K$ holds. For that purpose, we first need to settle a simple lemma:

\begin{lemma}\label{lemloc}
Let $l/k$ be a finite extension and set $K_0:=k((t))$ and $L_0:=l((t))$. The residue map $\partial: \K(K_0) \rightarrow k^\times$ induces an isomorphism:
$$\K(K_0)/N_{L_0/K_0}(\K(L_0)) \cong k^\times/N_{l/k}(l^\times).$$
\end{lemma}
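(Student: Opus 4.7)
The plan is to show that the residue map $\partial$ itself induces the claimed isomorphism. The map $\partial:\K(K_0)\to k^\times$ is surjective (a section is $a\mapsto\{t,a\}$, which has residue $a$), and the compatibility diagram \eqref{eqn diagr norm K-th} applied to the extension $L_0/K_0$ — which has ramification index $1$ and residue extension $l/k$ — yields $\partial\circ N_{L_0/K_0}=N_{l/k}\circ\partial_{L_0}$. Hence $\partial(N_{L_0/K_0}(\K(L_0)))=N_{l/k}(l^\times)$, and $\partial$ descends to a surjection $\bar\partial:\K(K_0)/N_{L_0/K_0}(\K(L_0))\twoheadrightarrow k^\times/N_{l/k}(l^\times)$.

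The heart of the argument is injectivity of $\bar\partial$, equivalently the inclusion $U_2(K_0)=\ker\partial\subseteq N_{L_0/K_0}(\K(L_0))$. I would exploit the filtration $U_2^1(K_0)\subset U_2(K_0)$ and handle each layer separately. For $U_2^1(K_0)$: since the residue field $k$ has characteristic $0$, the excerpt guarantees that $U_2^1(K_0)$ is $\ell$-divisible for every prime $\ell$, hence divisible. Given $\xi\in U_2^1(K_0)$, write $\xi=n\xi'$ with $n=[l:k]$ and $\xi'\in U_2^1(K_0)$; since restriction sends $U_2^1(K_0)$ into $U_2^1(L_0)$ and $N_{L_0/K_0}\circ\mathrm{res}=n$, we get $\xi=N_{L_0/K_0}(\mathrm{res}(\xi'))\in N_{L_0/K_0}(U_2^1(L_0))$.

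For the quotient $U_2(K_0)/U_2^1(K_0)\cong\K(k)$, diagram \eqref{eqn diagr norm K-th} identifies the induced norm with $N_{l/k}:\K(l)\to\K(k)$, so I need this map to be surjective. Since $N_{l/k}\circ\mathrm{res}_{l/k}$ equals multiplication by $n=[l:k]$, the cokernel of $N_{l/k}$ is a quotient of $\K(k)/n$. By the Merkurjev-Suslin isomorphism $\K(k)/n\cong H^2(k,\mu_n^{\otimes 2})$ the norm mod $n$ corresponds to the corestriction in Galois cohomology; by local Tate duality for the $p$-adic field $k$, this corestriction is dual to the restriction $H^0(k,\mu_n^{\otimes -1})\to H^0(l,\mu_n^{\otimes -1})$, which is an injective inclusion of Galois invariants. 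Therefore the corestriction is surjective, and $N_{l/k}:\K(l)\twoheadrightarrow\K(k)$.

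Assembling the pieces: given $\xi\in U_2(K_0)$ with class $\bar\xi\in\K(k)$, choose $\bar\eta\in\K(l)$ with $N_{l/k}(\bar\eta)=\bar\xi$, lift $\bar\eta$ to $\eta\in U_2(L_0)$ by lifting its symbol representatives to $l^\times\subset l[[t]]^\times$, and observe that $\xi-N_{L_0/K_0}(\eta)\in U_2^1(K_0)$, which is a norm by the previous step. Hence $U_2(K_0)\subseteq N_{L_0/K_0}(\K(L_0))$, completing the proof of injectivity. The main obstacle is the surjectivity of $N_{l/k}$ on $\K$ between $p$-adic fields, which rests on the Bloch-Kato (Merkurjev-Suslin) theorem combined with local Tate duality; the rest of the argument is a formal filtration manipulation using divisibility of $U_2^1$ and the norm/residue compatibility.
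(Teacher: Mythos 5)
Your proof is correct and follows essentially the same strategy as the paper: reduce via the residue map to showing $U_2(K_0)\subseteq N_{L_0/K_0}(\K(L_0))$, handle $U_2^1(K_0)$ by its divisibility (the residue characteristic is $0$), and handle the quotient $U_2(K_0)/U_2^1(K_0)\cong\K(k)$ by the surjectivity of $N_{l/k}:\K(l)\to\K(k)$. The one point of divergence is that the paper simply invokes the $C_0^2$-property of $p$-adic fields for that surjectivity, whereas you re-derive it from Merkurjev--Suslin and local Tate duality; this is exactly how the $C_0^2$-property of $p$-adic fields is proved (cohomological dimension $2$ plus Bloch--Kato), so it is a self-contained rendering of the same ingredient rather than a different route.
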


\begin{proof}
We have the following commutative diagram from \eqref{eqn diagr norm K-th}:
\begin{equation*}
    \xymatrix{
    \K(L_0)\ar[r]^-{\partial_{L_0}}\ar[d]_{N_{L_0/K_0}} & l^\times \ar[d]^{N_{l/k}}\\
    \K(K_0) \ar[r]^-{\partial_{K_0}} & k^\times.
    }
\end{equation*}
Recalling that $U_2(K_0)$ is by definition the kernel of $\partial_{K_0}$ (cf.~\S\ref{sec prel}), this diagram induces an exact sequence:
$$
0 \rightarrow \frac{U_2(K_0)}{U_2(K_0) \cap N_{L_0/K_0}(\K(L_0))} \rightarrow \frac{\K(K_0)}{N_{L_0/K_0}(\K(L_0))} \xrightarrow{\bar \partial_{K_0}} \frac{k^\times}{N_{l/k}(l^\times)} \rightarrow 0.
$$
It therefore suffices to prove that $U_2(K_0)=U_2(K_0) \cap N_{L_0/K_0}(\K(L_0))$. For that purpose, recall that we have a commutative diagram with exact lines:
\begin{equation*}
    \xymatrix{
    0 \ar[r] & U_2^1(L_0) \ar[d]^{N_{L_0/K_0}} \ar[r]& U_2(L_0) \ar[d]^{N_{L_0/K_0}}\ar[r]& \K(l) \ar[d]^{N_{l/k}}\ar[r] & 0\\
    0 \ar[r] & U_2^1(K_0)  \ar[r]& U_2(K_0) \ar[r]& \K(k) \ar[r] & 0.
    }
\end{equation*}
But the map $N_{l/k}: \K(l)\rightarrow \K(k)$ is surjective since $p$-adic fields have the $C_0^2$-property, and the map $N_{L_0/K_0}: U_2^1(L_0)\rightarrow U_2^1(K_0)$ is surjective since the group $U_2^1(K_0)$ is divisible (as explained in \S\ref{sec prel}). We deduce that $N_{L_0/K_0}: U_2(L_0)\rightarrow U_2(K_0)$ is also surjective, as wished.
\end{proof}

\begin{proposition}\label{proploc}
Let $l/k$ be a finite unramified extension and set $K_0:=k((t))$ and $L_0:=l((t))$. Let $Z$ be a proper $K_0$-variety. Then the quotient:
$$\K(K_0)/\langle N_{L_0/K_0}(\K(L_0)), N_2(Z/K_0) \rangle$$
is $\chi_{K_0}(Z,E)$-torsion for each coherent sheaf $E$ on $Z$.
\end{proposition}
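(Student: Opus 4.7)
The plan is to proceed by induction on $\dim Z$, adapting to our setting the dévissage used by Wittenberg in \cite{Wit} to prove the $C_1^1$-property for $p$-adic fields. The extra denominator $N_{L_0/K_0}(\K(L_0))$ is fixed throughout the argument and plays no role in the dévissage; it only serves to shrink the final quotient.

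For the base case $\dim Z = 0$, write $Z = \mathrm{Spec}\bigl(\prod_{i=1}^{r} L_i\bigr)$ with $L_i/K_0$ finite field extensions. A coherent sheaf $E$ on $Z$ decomposes as $E = \prod E_i$ with $E_i$ a finite-dimensional $L_i$-vector space of $L_i$-dimension $n_i$, so that $\chi_{K_0}(Z,E) = \sum_i n_i [L_i:K_0]$. The projection formula $N_{L_i/K_0} \circ \mathrm{res}_{L_i/K_0} = [L_i:K_0]\cdot\mathrm{id}$ on $\K(K_0)$ implies that $\K(K_0)/N_{L_i/K_0}(\K(L_i))$ is $[L_i:K_0]$-torsion for each $i$. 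Since each $N_{L_i/K_0}(\K(L_i)) \subseteq N_2(Z/K_0)$, the quotient in question is annihilated by $\gcd_i [L_i:K_0]$, which divides $\chi_{K_0}(Z,E)$. Equivalently, Lemma \ref{lemloc} identifies the quotient with a subquotient of $k^\times/N_{l/k}(l^\times) \cong \mathbb{Z}/f\mathbb{Z}$ in which the image of $N_2(Z/K_0)$ is generated by the residue degrees $f_{\lambda_{L_i}/k}$ of the residue fields $\lambda_{L_i}$; since each $f_{\lambda_{L_i}/k}$ divides $[L_i:K_0]$, one checks directly that $\gcd\bigl(f,\gcd_i f_{\lambda_{L_i}/k}\bigr)$ divides $\chi_{K_0}(Z,E)$.

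For the inductive step ($\dim Z \geq 1$), Chow's lemma reduces to the case that $Z$ is projective over $K_0$. Fix an embedding $Z \hookrightarrow \mathbb{P}^N_{K_0}$ and a hyperplane $H \subseteq \mathbb{P}^N_{K_0}$. The scheme-theoretic intersection $H \cap Z$ is a proper $K_0$-variety of dimension strictly smaller than $\dim Z$, and the short exact sequence
\[
0 \to E(-1) \to E \to E|_{H \cap Z} \to 0
\]
yields the identity $\chi_{K_0}(Z,E) = \chi_{K_0}(Z,E(-1)) + \chi_{K_0}(H \cap Z, E|_{H \cap Z})$. Since $N_2(H \cap Z/K_0) \subseteq N_2(Z/K_0)$, the induction hypothesis applied to $H \cap Z$ (and to each twist $E(m)$ of $E$) shows that the quotient attached to $Z$ is $\chi_{K_0}(H \cap Z, E(m)|_{H \cap Z})$-torsion for all integers $m$. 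By iterating the identity above and exploiting the polynomial dependence of $\chi_{K_0}(Z, E(m))$ on $m$ (the Hilbert polynomial has degree $\dim Z$), one writes $\chi_{K_0}(Z,E)$ as an explicit $\mathbb{Z}$-linear combination of the Euler characteristics $\chi_{K_0}(H \cap Z, E(m)|_{H \cap Z})$, from which the desired $\chi_{K_0}(Z,E)$-torsion follows.

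The main obstacle is the combinatorial step at the end of the induction: one must carry out the Hilbert polynomial manipulation to extract $\chi_{K_0}(Z,E)$ as a $\mathbb{Z}$-linear combination of Euler characteristics of its hyperplane sections and their twists. This manipulation mirrors the one carried out by Wittenberg for the $p$-adic $C_1^1$ case in \cite{Wit}, and it adapts formally to our setting precisely because the additional denominator $N_{L_0/K_0}(\K(L_0))$ does not change as one passes from $Z$ to $H \cap Z$.
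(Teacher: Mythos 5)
Your proposed inductive step does not work. The key claim is that one can write $\chi_{K_0}(Z,E)$ as a $\mathbb{Z}$-linear combination of the Euler characteristics $\chi_{K_0}(H\cap Z, E(m)|_{H\cap Z})$. This is false: the relation $\chi(Z,E(m))-\chi(Z,E(m-1))=\chi(H\cap Z,E(m)|_{H\cap Z})$ determines the first difference of the Hilbert polynomial $P(m)=\chi(Z,E(m))$, hence all its coefficients except the constant term $P(0)=\chi_{K_0}(Z,E)$. No finite $\mathbb{Z}$-linear combination of values $P(m)-P(m-1)$ can recover $P(0)$ (try $Z=\mathbb{P}^2$, $E=\mathcal{O}$: the differences are $m+1$, whose $\gcd$ over any finite set of $m$'s is $1$, but this reflects nothing about the constant $1=\chi(\mathbb{P}^2,\mathcal{O})$). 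So your induction establishes torsion by $\gcd_m\chi_{K_0}(H\cap Z,E(m)|_{H\cap Z})$, which need not divide $\chi_{K_0}(Z,E)$. The dévissage used in \cite{Wit} and in the paper is not a hyperplane-section induction on the Hilbert polynomial; it is a filtration argument on coherent sheaves by support, together with the identity $\chi_K(X,E)\equiv \mathrm{rank}(E)\cdot\chi_K(X,\mathcal{O}_X)$ modulo Euler characteristics of sheaves supported on proper closed subschemes, followed by alterations.

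There is a second, equally serious gap: your argument never engages with the arithmetic of the residue field $k$, whereas this is where all the content lies. The paper applies Wittenberg's abstract dévissage (the paper's Proposition \ref{dev}) and reduces to verifying condition (2) for a proper $K_0$-scheme $X$ admitting a regular, irreducible, proper, flat model $\mathcal{X}/\mathcal{O}_{K_0}$. That verification uses the residue map $\partial:\K(K_0)\to k^\times$, the multiplicity $m$ of the special fiber $Y=mD$, the identification $\partial(N_2(X/K_0))=N_1(D/k)$, and Wittenberg's Corollary 5.4 over the $p$-adic field $k$, combined with the formula $m\chi_k(D,\mathcal{O}_D)=\chi_{K_0}(X,\mathcal{O}_X)$ from \cite{ELW}. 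None of this arithmetic appears in your sketch, and the base case ($\dim Z=0$) alone cannot supply it, since the $\gcd$ of degrees of closed points need not divide $\chi_{K_0}(Z,E)$ (e.g. a conic without rational points has Euler characteristic $1$ but all closed points of even degree). You would need to replace the hyperplane-section induction by the filtration/rank argument and then supply the genuine arithmetic input over regular models as in the paper.
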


\begin{proof}
For each proper ${K_0}$-scheme $Z$, we denote by $n_Z$ the exponent of the quotient group $\K({K_0})/\langle N_{L_0/K_0}(\K(L_0)), N_2(Z/K_0) \rangle$. We say that $Z$ satisfies property $(P)$ if it has a model over $\mathcal{O}_{K_0}$ that is irreducible, regular, proper and flat. To prove the proposition, it suffices to check assumptions (1), (2) and (3) of Proposition 2.1 of \cite{Wit}. \\

Assumption (1) is obvious. Assumption (3) is a direct consequence of Gabber and de Jong's Theorem (Theorem 3 of the Introduction of \cite{ILO}). It remains to check assumption (2). For that purpose, we proceed in the same way as in the proof of Theorem 4.2 of \cite{Wit}. Indeed, consider a proper $K_0$-scheme $X$ together with a model $\mathcal{X}$ that is irreducible, regular, proper and flat and denote by $Y$ its special fiber. Let $m$ be the multiplicity of $Y$ and let $D$ be the effective divisor on $\mathcal{X}$ such that $Y=mD$. \\

The residue map induces an exact sequence:
\begin{equation}\label{exs}
0 \rightarrow \frac{U_2(K_0)}{U_2(K_0) \cap N_2(X/K_0)} \rightarrow \frac{\K(K_0)}{N_2(X/K_0)} \rightarrow \frac{\mathrm{K}_1(k)}{\partial (N_2(X/K_0))} \rightarrow 0.
\end{equation}
Moreover:
\begin{itemize}
    \item[(a)] since $k$ satisfies the $C_0^2$ property, the proof of Lemma 4.4 of \cite{Wit} still holds in our context, and hence the group $\frac{U_2(K_0)}{U_2(K_0) \cap N_2(X/K_0)}$ is killed by the multiplicity $m$ of the special fiber $Y$ of $\mathcal{X}$;
    \item[(b)] the proof of Lemma 4.5 of \cite{Wit} also holds in our context, and hence $\partial (N_2(X/K_0))=N_1(Y/k)=N_1(D/k)$;
    \item[(c)] by Corollary 5.4 of \cite{Wit} applied to the proper $k$-scheme $D \sqcup \mathrm{Spec}(l)$, the group $k^{\times}/\langle N_{l/k}(l^{\times}), N_1(D/k) \rangle$ is killed by $\chi_k(D,\mathcal{O}_D)$.
\end{itemize}
By using exact sequence \eqref{exs}, facts (b) and (c) and Lemma \ref{lemloc}, we deduce that:
$$\chi_k(D,\mathcal{O}_D) \cdot \K(K_0) \subset \langle N_{L_0/K_0}(\K(L_0)), N_2(X/K_0), U_2(K_0) \rangle.$$
Hence, by fact (a), we get:
$$m\chi_k(D,\mathcal{O}_D) \cdot \K(K_0) \subset \langle N_{L_0/K_0}(\K(L_0)), N_2(X/K_0) \rangle.$$
But $m\chi_k(D,\mathcal{O}_D)=\chi_{K_0}(X,\mathcal{O}_X)$ by Proposition 2.4 of \cite{ELW}, and hence the quotient $\K(K_0)/\langle N_{L_0/K_0}(\K(L_0)), N_2(X/K_0) \rangle$ is killed by $\chi_{K_0}(X,\mathcal{O}_X)$.
\end{proof}

\subsubsection{Step 3: Globalizing local field extensions}\label{subsubsec global}

In rest of the proof, we will show how one can deduce the global Theorem \ref{corint} from the local Proposition \ref{proploc}. For that purpose, we first need to find a suitable way to globalize local extension: more precisely, given a place $w \in C^{(1)}$ and a finite extension $M^{(w)}$ of $K_w$ such that $Z(M^{(w)}) \neq \emptyset$, we want to find a suitable finite extension $M$ of $K$ that can be seen as a subfield of $M^{(w)}$ and such that $Z(M)\neq \emptyset$. For technical reasons related to the failure of Cébotarev's Theorem over the field $K$, we also need $M$ to be linearly disjoint from a given finite extension of $K$. The following proposition is the key statement allowing to do that:

\begin{proposition}\label{locglob}
Let $Z$ be a smooth geometrically integral $K$-variety. Let $T$ be a finite subset of $C^{(1)}$. Fix a finite extension $L$ of $K$ and, for each $w\in T$, a finite extension $M^{(w)}$ of $K_w$ such that $Z(M^{(w)})\neq \emptyset$. Then there exists a finite extension $M$ of $K$ satisfying the following properties:
\begin{itemize}
    \item[(i)] $Z(M)\neq\emptyset$;
    \item[(ii)] for each $w\in T$, there exists a $K$-embedding $M\hookrightarrow M^{(w)}$;
    \item[(iii)] the extensions $L/K$ and $M/K$ are linearly disjoint.
\end{itemize}
\end{proposition}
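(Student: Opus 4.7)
The plan is to construct $M$ as the residue field at a carefully chosen closed point of $Z$, combining a local lift supplied by the implicit function theorem with a Hilbertian specialization argument that controls the Galois-theoretic shape of the residue field. Let $n = \dim Z$. After restricting to a dense affine open subscheme of $Z$, I fix a generically étale dominant morphism $\pi \colon Z \to \mathbb{A}^n_K$, obtained for instance via a Noether normalization. For each $w \in T$, the point $z_w \in Z(M^{(w)})$ is smooth on $Z$, and after a further Zariski shrinking we may assume $\pi$ is étale at $z_w$. The implicit function theorem over the complete field $M^{(w)}$ then produces a $w$-adic analytic open neighborhood $\Omega_w \subseteq \mathbb{A}^n(M^{(w)})$ of $\pi(z_w)$ together with a continuous section $\sigma_w \colon \Omega_w \to Z(M^{(w)})$ satisfying $\sigma_w(\pi(z_w)) = z_w$.

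Next, by density of $K_w$ in $M^{(w)}$, one can find $K_w$-rational points inside $\Omega_w$; fix a $w$-adic open $U_w \subseteq \mathbb{A}^n(K_w) \cap \Omega_w$. By weak approximation for $\mathbb{A}^n_K$ (the image of $K$ is dense in $\prod_{w \in T} K_w$), the set of $K$-points in $\bigcap_{w \in T} U_w$ is nonempty and even Zariski dense. To guarantee that the specialization generates an extension linearly disjoint from $L$, I invoke the Hilbertianity of $K$ (which holds since $K$ is a finitely generated field of transcendence degree one over $k$): for any generically finite cover of $\mathbb{A}^n_K$ the set of $K$-rational base points over which the fiber fails to be irreducible with the full generic monodromy group is thin. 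I apply this to the cover obtained from $\pi$ together with an auxiliary cover encoding $\mathrm{Gal}(L'/K)$, where $L'$ is a Galois closure of $L$; an irreducible fiber with full monodromy then automatically has residue field linearly disjoint from $L$.

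The main technical obstacle is the compatibility of the two conditions: the open condition $P \in \bigcap_{w \in T} U_w$ arising from approximation, and the thin-complement condition coming from Hilbert irreducibility. What is required is that a thin subset of $\mathbb{A}^n(K)$ cannot engulf the intersection of finitely many $w$-adic open subsets, a property satisfied by function fields of curves over $p$-adic fields (the Bertini-type proofs of Hilbertianity for such fields yield this uniform version, in the spirit of \cite{diego}). Once such a $K$-point $P$ is chosen, the scheme-theoretic fiber $\pi^{-1}(P)$ consists of a single closed point $Q \in Z$, and $M := \kappa(Q)$ satisfies (i) tautologically, (ii) through the sections $\sigma_w(P) \in Z(M^{(w)})$, which give rise to $K$-embeddings $M \hookrightarrow M^{(w)}$, and (iii) by the Hilbertian choice.
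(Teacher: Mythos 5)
Your strategy is the same as the paper's: use the implicit function theorem over each completion $M^{(w)}$ to produce a local section near a smooth point, then combine weak approximation with the Hilbertianity of $K$ to locate a $K$-rational specialization point whose fiber yields the desired field $M$. The execution differs in minor ways: the paper passes to a birational hypersurface model $\{f=0\}$ of $Z$ and specializes the coefficients of $f$ in two stages (first constructing an auxiliary field $K'$, then building $M$ over $K'$), invoking Krasner's Lemma to obtain the embeddings $M \hookrightarrow M^{(w)}$; you instead fix a Noether normalization $\pi : Z \to \mathbb{A}^n_K$ and let the local sections $\sigma_w$ furnish the embeddings directly. Both routes work, and yours is slightly cleaner in avoiding the two-stage specialization.

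That said, there is a real gap, located exactly at what you call the ``main technical obstacle.'' The assertion that a Hilbert subset of $\mathbb{A}^n(K)$ meets the intersection of finitely many $w$-adic open sets is a genuine theorem requiring a precise source, and the hand-wave toward ``Bertini-type proofs of Hilbertianity'' together with a misdirected reference to \cite{diego} does not supply one. The paper cites Corollary 12.2.3 of \cite{FA} to identify the relevant set as Hilbertian and Proposition 19.7 of \cite{Jarden} for its compatibility with finitely many local approximation constraints; you need the same (or equivalent) references to close the argument. A secondary, smaller issue: the linear-disjointness argument via ``full generic monodromy'' of an auxiliary cover is stated too loosely. Cleaner: since $Z$ is geometrically integral, the base-changed cover $Z \times_K L \to \mathbb{A}^n_K$ is irreducible of degree $[L:K]\deg(\pi)$ over $K$, and requiring $P$ to lie in the Hilbert set over which this cover has irreducible fiber yields (iii) at once, with no need to reason about monodromy groups or Galois closures.
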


\begin{proof}
Before starting the proof, we introduce the following notations for each $w \in T$:
\begin{gather*}
    n^{(w)}:=[M^{(w)}:K_w],\\
    m^{(w)}:=\prod_{w' \in T \setminus \{w\}} n^{(w')},
\end{gather*}
so that the integer $n:=n^{(w)}m^{(w)}$ is independent of $w$. We now proceed in three substeps.\\

\emph{Substep 1.} By Proposition 4.9 in Chapter I of \cite{Hart}, there exists a projective hypersurface $Z'$ in $\mathbb{P}^m_K$ given by a non-zero equation $$f(x_0,\ldots,x_m)=0$$ that is birationally equivalent to $Z$. Let $U$ and $U'$ be non-empty open sub-schemes of $Z$ and $Z'$ that are isomorphic. Up to reordering the variables and shrinking $U'$, we may and do assume that the polynomial $\partial f /\partial x_0$ is non-zero and that:
$$U' \cap \{\partial f/\partial x_0(x_0,\ldots,x_m)=0\} = \emptyset.$$

Given an element $w\in T$, the variety $Z$ is smooth, $Z(M^{(w)})\neq \emptyset$ and $M^{(w)}$ is large (for the definition of this notion, please refer to \cite{Pop}). Hence the sets $U(M^{(w)})$ and $U'(M^{(w)})$ are non-empty. We can therefore find a non-trivial solution $(y_0^{(w)},\ldots,y_m^{(w)})$ of the equation $f(x_0,\ldots,x_m)=0$ in $M^{(w)}$ such that: 
$$\begin{cases}
(y_0^{(w)},\ldots,y_m^{(w)})\in U'\\
\partial f/\partial x_0(y_0^{(w)},\ldots,y_m^{(w)})\neq 0.
\end{cases}$$

\emph{Substep 2.} Given $w \in T$, there exist $m^{(w)}$ elements $\alpha_1,\ldots,\alpha_{m^{(w)}} \in M^{(w)}$ whose respective minimal polynomials $\mu_{\alpha_1},\ldots,\mu_{\alpha_{m^{(w)}}}$ are pairwise distinct and such that $M^{(w)}=K_w(\alpha_i)$ for each $1
\leq i\leq m^{(w)}$. Recalling that $n=n^{(w)}m^{(w)}$, introduce the degree $n$ monic polynomial $\mu^{(w)}:=\prod_{i=1}^{m^{(w)}} \mu_{\alpha_i}$ and consider the set $H$ of $n$-tuples $(a_0,\ldots,a_{n-1})\in K^n$ such that the polynomial $T^{n}+\sum_{i=0}^{n-1}a_iT^i$ is irreducible over $L$. By Corollary 12.2.3 of \cite{FA}, the set $H$ contains a Hilbertian subset of $K^{n}$, and hence, according to Proposition 19.7 of \cite{Jarden}, if we fix some $\epsilon>0$, we can find an $n$-tuple $(b_0,\ldots,b_{n-1})$ in $H$ such that the polynomial $\mu:=T^{n}+\sum_{i=0}^{n-1}b_iT^i$ is coefficient-wise $\epsilon$-close to $\mu^{(w)}$ for each $w \in T$. Consider the field $K':=K[T]/(\mu)$. If $\epsilon$ is chosen small enough, then there exists a $K$-embedding $K'\hookrightarrow M^{(w)}$ for each $w\in T$ by Krasner's Lemma (cf.~Lemma 8.1.6 in \cite{NSW}). Moreover, since $(b_0,\ldots,b_{n-1})\in H$, the polynomial $\mu$ is irreducible over $L$, and hence the extensions $K'/K$ and $L/K$ are linearly disjoint. \\

\emph{Substep 3.} According to Substep 1, for each $w\in T$, $y_0^{(w)}$ is a simple root of the polynomial \[g^{(w)}(T):=f(T,y_1^{(w)},\ldots,y_m^{(w)}).\]
Let $H'$ be the set of $m$-tuples $(z_1,\ldots,z_m)$ in $K'$ such that $f(T,z_1,\ldots,z_m)$ is irreducible over $LK'$. By Corollary 12.2.3 of \cite{FA}, the set $H'$ contains a Hilbertian subset of $K'^{m}$. Hence, by Proposition 19.7 of \cite{Jarden}, we can find $(y_1,\ldots,y_m)$ in $H'$ such that the polynomial \[g(T):=f(T,y_1,\ldots,y_m)\] is coefficient-wise $\epsilon$-close to $g^{(w)}$ for each $w\in T$. Introduce the field $M:=K'[T]/(g(T))$. We check that $M$ satisfies the conditions of the proposition, provided that $\epsilon$ is chosen small enough:
\begin{itemize}
    \item[(i)] Fix $w\in T$. By Substep 1, the $m$-tuple $(y_0^{(w)},\ldots,y_m^{(w)})$ lies in $U'$. Hence, for $\epsilon$ small enough, if $y_{0,w}$ stands for the root of $g$ that is closest to $y_0^{(w)}$, then the $m$-tuple $(y_{0,w},y_1,\ldots,y_m)$ lies in $U'$. We deduce that $U'(M)\neq \emptyset$, and hence $Z(M)\neq \emptyset$.
    \item[(ii)] For each $w\in T$, the polynomial $g^{(w)}$ has a simple root in $M^{(w)}$, and hence so does $g(T)$ if $\epsilon$ is chosen small enough, again by Krasner's Lemma. The field $M$ can therefore be seen as a subfield of $M^{(w)}$.
    \item[(iii)] Since $(y_1,\ldots,y_m)\in H'$, the polynomial $g(T)$ is irreducible over $LK'$. Hence the extensions $M/K'$ and $LK'/K'$ are linearly disjoint. Moreover, by Substep 2, the extensions $K'/K$ and $L/K$ are linearly disjoint. We deduce that $L/K$ and $M/K$ are linearly disjoint.
\end{itemize}
\end{proof}

\subsubsection{Step 4: Computation of a Tate-Shafarevich group}\label{subsubsec calcul Sha}

This step, which is quite technical, consists in computing the Tate-Shafarevich groups of some finitely generated free Galois modules over $K$. Recall that for each abelian group $A$, we denote by $\overline{A}$ the quotient of $A$ by its maximal divisible subgroup.

\begin{proposition}\label{shadiv}
Let $r\geq 2$ be an integer and let $L,K_1,\ldots,K_r$ be finite extensions of $K$ contained in $\overline{K}$. Consider the composite fields $K_{\mathcal{I}}:=K_1\cdots K_r$ and $K_{\hat{i}}:=K_1\cdots K_{i-1}K_{i+1}\cdots K_r$ for each $i$, and denote by $n$ the degree of $L/K$. Consider the Galois module $\hat{T}$ defined by the following exact sequence:
\begin{equation}\label{hat}
0 \rightarrow  \mathbb{Z}\rightarrow \mathbb{Z}[E/K] \rightarrow  \hat{T} \rightarrow 0,
  \end{equation}
where $E:=L\times K_1 \times \cdots \times K_r$. Given two positive integers $m$ and $m'$, make the following assumptions:
\begin{enumerate}[label=\upshape(LD\arabic*),ref=\upshape(LD\arabic*)]
\item\label{ld1} the Galois closure of $L/K$ and the extension $K_{\mathcal{I}}/K$ are linearly disjoint;
\item\label{h2} for each $i \in \{1,\ldots,r\}$, the fields $K_{i}$ and $K_{\hat{i}}$ are linearly disjoint over $K$;
\end{enumerate}

\begin{enumerate}[label=\upshape(H\arabic*),ref=\upshape(H\arabic*)]
\item\label{h1} the restriction map:
$$\Sha^2(K,\hat{T}) \rightarrow \Sha^2(L,\hat{T})\oplus \Sha^2(K_{\mathcal{I}},\hat{T})$$
is injective;

\item\label{h3} the restriction map $$\mathrm{Res}_{LK_{\mathcal{I}}/K_{\mathcal{I}}}: \Sha^2(K_{\mathcal{I}},\mathbb{Z}) \rightarrow \Sha^2(LK_{\mathcal{I}},\mathbb{Z})$$  is surjective and its kernel is $m$-torsion;

\item\label{h4} for each $i$, the restriction maps $$\mathrm{Res}_{LK_i/K_i}: \Sha^2(K_i,\mathbb{Z}) \rightarrow \Sha^2(LK_i,\mathbb{Z})$$
and
$$\mathrm{Res}_{LK_{\hat{i}}/K_{\hat{i}}}: \Sha^2(K_{\hat{i}},\mathbb{Z}) \rightarrow \Sha^2(LK_{\hat{i}},\mathbb{Z})$$
are surjective;

\item\label{h5} for each finite extension $L'$ of $L$ contained in the Galois closure of $L/K$, the kernel of the restriction map $$\mathrm{Res}_{L'K_{\mathcal{I}}/L}: \Sha^2(L',\mathbb{Z}) \rightarrow \Sha^2(L'K_{\mathcal{I}},\mathbb{Z})$$ is $m'$-torsion.
\end{enumerate} 
Then $\overline{\Sha^2(K,\hat{T})}$ is $\left((m\vee m')\wedge n\right)$-torsion.
\end{proposition}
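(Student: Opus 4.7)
The plan is to exploit hypothesis \ref{h1} to reduce the torsion question for $\overline{\Sha^2(K,\hat T)}$ to separate analyses of the images in $\Sha^2(L,\hat T)$ and in $\Sha^2(K_{\mathcal I},\hat T)$, and then to extract the bound via the long exact cohomology sequence associated to \eqref{hat}.

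More precisely, over any field $F$ containing $K$, the sequence \eqref{hat} yields a long exact sequence in Galois cohomology, in which Shapiro's lemma identifies $H^i(F,\mathbb{Z}[E/K])$ with $H^i(E\otimes_K F,\mathbb{Z})$. The crucial input here is that, by the Beilinson-Lichtenbaum conjecture as recalled at the end of \S\ref{sec prel}, $\Sha^2(F',\mathbb{Z})$ is divisible for every field $F'$; hence $\Sha^2(F,\mathbb{Z}[E'/F])$ is divisible for every étale $F$-algebra $E'$, and this divisibility is what allows us to control $\overline{\Sha^2(F,\hat T)}$ through the sequence.

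Next, I would carry out two parallel analyses. Over $F=K_{\mathcal I}$: by \ref{ld1} the algebra $L\otimes_K K_{\mathcal I}$ equals $LK_{\mathcal I}$, while each $K_i\otimes_K K_{\mathcal I}$ splits as an étale $K_{\mathcal I}$-algebra into fields containing $K_{\mathcal I}$ (the combinatorics being constrained by \ref{h2}); hypothesis \ref{h3} then controls, up to $m$-torsion, the interplay between $\Sha^2(K_{\mathcal I},\mathbb{Z})$ and $\Sha^2(LK_{\mathcal I},\mathbb{Z})$, and a diagram chase through the long exact sequence gives an $m$-torsion bound for the image of $\Sha^2(K,\hat T)$ in $\overline{\Sha^2(K_{\mathcal I},\hat T)}$. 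Over $F=L$: the factor $L\otimes_K L$ splits into fields $L_\sigma$ lying between $L$ and the Galois closure $\tilde L$ of $L/K$, indexed by double cosets of $\mathrm{Gal}(\overline K/L)$ in $\mathrm{Gal}(\overline K/K)$, and each $K_i\otimes_K L$ splits similarly; hypothesis \ref{h5} controls the restriction $\Sha^2(L_\sigma,\mathbb{Z})\to\Sha^2(L_\sigma K_{\mathcal I},\mathbb{Z})$ up to $m'$-torsion, while \ref{h4} handles the interaction with the $K_i$-components, giving an $m'$-torsion bound for the image of $\Sha^2(K,\hat T)$ in $\overline{\Sha^2(L,\hat T)}$.

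Combining these two bounds via \ref{h1} yields an $(m\vee m')$-torsion bound for $\overline{\Sha^2(K,\hat T)}$. To incorporate the factor of $n$, I would apply restriction-corestriction: since $[L:K]=n$, the composite $\mathrm{Cor}_{L/K}\circ\mathrm{Res}_{L/K}$ equals multiplication by $n$ on $\Sha^2(K,\hat T)$, and since corestriction preserves infinite divisibility, $n$ times any class becomes infinitely divisible modulo $(m\vee m')$-torsion, producing the claimed $(m\vee m')\wedge n$ bound after taking the gcd. The main obstacle will be the intricate torsion bookkeeping across several Shapiro decompositions and long exact sequences at once; this is compounded by the fact that $L/K$ is not assumed Galois, which forces the double-coset analysis of $L\otimes_K L$, and by the need to chain hypotheses \ref{h3}--\ref{h5} consistently across the $K_{\mathcal I}$ and $L$ tracks.
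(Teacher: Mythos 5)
Your proposal has a genuine gap at the step where you ``combine these two bounds via \ref{h1}.'' Hypothesis \ref{h1} gives an injection
$\Sha^2(K,\hat T) \hookrightarrow \Sha^2(L,\hat T)\oplus \Sha^2(K_{\mathcal I},\hat T)$,
but an injection of abelian groups does \emph{not} induce an injection after passing to the quotients by maximal divisible subgroups (compare $\mathbb{Z}\hookrightarrow \mathbb{Q}$). Worse, because the sequence \eqref{hat} splits over $L$ and over $K_{\mathcal I}$, the module $\hat T$ restricted to either of those fields is a direct summand of a permutation module, so $\Sha^2(L,\hat T)$ and $\Sha^2(K_{\mathcal I},\hat T)$ are both divisible and $\overline{\Sha^2(L,\hat T)}=\overline{\Sha^2(K_{\mathcal I},\hat T)}=0$. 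Thus the ``$m$-torsion bound for the image in $\overline{\Sha^2(K_{\mathcal I},\hat T)}$'' and the ``$m'$-torsion bound for the image in $\overline{\Sha^2(L,\hat T)}$'' are both vacuous (everything maps to zero), and combining them with \ref{h1} yields no constraint at all on $\overline{\Sha^2(K,\hat T)}$. The whole difficulty of the proposition is precisely that divisibility of the restrictions to $L$ and $K_{\mathcal I}$ does not imply divisibility over $K$.

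The paper's actual mechanism is different: rather than descending to $\overline{\cdot}$ over $L$ and $K_{\mathcal I}$, one shows (Lemma~\ref{verytech}) that for any $a\in\Sha^2(K,\hat T)$, the class $(m\vee m')\,a$ lies in the image of $\psi_0\colon \Sha^2(K,\mathbb{Z}[E/K])\to\Sha^2(K,\hat T)$. Since $\Sha^2(K,\mathbb{Z}[E/K])$ is divisible, this image is divisible, so $(m\vee m')\,a$ is in the maximal divisible subgroup. Concretely, one chooses a lift $((x_\alpha),(y_i),z,(t_{ij}))\in \Sha^2(L,\mathbb{Z}[E/K])\oplus\Sha^2(K_{\mathcal I},\mathbb{Z}[E/K])$ of $f_0(a)$ killed by $g$, and then uses the Shapiro decomposition together with \ref{h3}, \ref{h4}, \ref{h5}, and a separate Lemma (\ref{provient}, which exploits \ref{h2} and the $K_{\hat i}$'s) to show $\mu\cdot((x_\alpha),(y_i),z,(t_{ij}))=\mu\cdot f(x_1,(\tilde y_i))$ for an explicit element of $\Sha^2(K,\mathbb{Z}[E/K])$. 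Applying $\psi_1$ and the injectivity of $f_0$ (from \ref{h1}) then gives the lift. Your final restriction--corestriction step for the factor of $n$ does coincide with the paper's, but the central part of the argument needs to be replaced by this lifting argument over $K$.
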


Recall that $\overline{A}$ denotes the quotient of $A$ by its maximal divisible subgroup.

\begin{remark}
In the sequel of the article, we will only use the proposition in the case when $L/K$ is Galois. However, this assumption does not simplify the proof.
\end{remark}

\begin{proof}
Consider the following sequence:
\begin{equation}\label{keyseq}
\xymatrix @R=.3pc{
\Sha^2(K,\hat{T}) \ar[r]^-{f_0}& \Sha^2(L,\hat{T})\oplus \Sha^2(K_{\mathcal{I}},\hat{T}) \ar[r]^-{g_0}& \Sha^2(LK_{\mathcal{I}},\hat{T})\\
x \ar@{|->}[r]& (\mathrm{Res}_{L/K}(x),\mathrm{Res}_{K_{\mathcal{I}}/K}(x))&\\
&(x,y) \ar@{|->}[r] & \mathrm{Res}_{LK_{\mathcal{I}}/L}(x) - \mathrm{Res}_{LK_{\mathcal{I}}/K_{\mathcal{I}}}(y).
}
\end{equation}
It is obviously a complex, and the first arrow is injective by \ref{h1}. In order to give further information about the complex \eqref{keyseq}, let us consider the following commutative diagram, in which the first and second rows are obtained in the same way as the third:
\begin{equation}\label{diagram}
\xymatrix{
& 0\ar[d] & 0\ar[d]\\
 \Sha^2(K,\mathbb{Z}) \ar[r]^-{f_1}\ar[d]^-{\phi_0}& \Sha^2(L,\mathbb{Z})\oplus \Sha^2(K_{\mathcal{I}},\mathbb{Z}) \ar@{->>}[r]^-{g_1}\ar[d]^-{\phi_1}& \Sha^2(LK_{\mathcal{I}},\mathbb{Z})\ar[d]^-{\phi_2}\\
 \Sha^2(K, \mathbb{Z}[E/K]) \ar[r]^<<<<<{f} \ar[d]^-{\psi_0}& \Sha^2(L, \mathbb{Z}[E/K])\oplus \Sha^2(K_{\mathcal{I}}, \mathbb{Z}[E/K]) \ar[r]^<<<<<{g}\ar[d]^-{\psi_1}& \Sha^2(LK_{\mathcal{I}}, \mathbb{Z}[E/K])\ar[d]^-{\psi_2}\\
 \Sha^2(K,\hat{T}) \ar@{^{(}->}[r]^>>>>>>>>>>>>>>>>{f_0}& \Sha^2(L,\hat{T})\oplus \Sha^2(K_{\mathcal{I}},\hat{T}) \ar[r]^>>>>>>>>>>>{g_0}\ar[d]& \Sha^2(LK_{\mathcal{I}},\hat{T})\ar[d]\\
 & 0 & 0
}
\end{equation}

The second and third columns are exact since the exact sequence (\ref{hat}) splits over $L$, $K_{\mathcal{I}}$ and $LK_{\mathcal{I}}$.  Moreover, all the lines are complexes, and in the first one, the arrow $g_1$ is surjective since the restriction map:
$$ \Sha^2(K_{\mathcal{I}},\mathbb{Z}) \rightarrow \Sha^2(LK_{\mathcal{I}},\mathbb{Z})$$
is surjective by assumption \ref{h3}. 

The next two lemmas constitute the core of the proof of Proposition \ref{shadiv}.

\begin{lemma}\label{provient}
Let $a\in \Sha^2(K,\hat{T}) $ and $b=(b_L,b_{K_{\mathcal{I}}}) \in \Sha^2(L, \mathbb{Z}[E/K])\oplus \Sha^2(K_{\mathcal{I}}, \mathbb{Z}[E/K]))$ such that $f_0(a)=\psi_1(b)$ and $g(b)=0$. Then $mb_{K_{\mathcal{I}}}$ comes by restriction from $ \Sha^2(K_{\hat{i}}, \mathbb{Z}[E/K])$ for each $i$.
\end{lemma}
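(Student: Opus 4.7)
The plan is to exploit the splitting of the sequence \eqref{hat} as Galois modules over every extension $F/K$ into which some factor of $E$ embeds. This holds over $L$ (via the identity factor $L\subseteq L\otimes_K L$), over $K_{\mathcal{I}}$, over $LK_{\mathcal{I}}$ and, crucially, over $K_{\hat{i}}$ and $LK_{\hat{i}}$: the assumption $r\geq 2$ guarantees some $j\neq i$ with $K_j\subseteq K_{\hat{i}}$, providing a factor $K_{\hat{i}}$ of $K_j\otimes_K K_{\hat{i}}$. I would first fix a splitting $s_{K_{\hat{i}}}$ of \eqref{hat} over $K_{\hat{i}}$ built from this $K_j$, and set $s_{K_{\mathcal{I}}}$, $s_{LK_{\hat{i}}}$ and $s_{LK_{\mathcal{I}}}$ to be its natural restrictions; independently I would fix a splitting $s_L$ over $L$ coming from the identity factor. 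These give canonical decompositions
\[b_L=\phi_1(x_L)+s_L(a_L),\qquad b_{K_{\mathcal{I}}}=\phi_1(x_{K_{\mathcal{I}}})+s_{K_{\mathcal{I}}}(a_{K_{\mathcal{I}}}),\]
with $a_L:=\mathrm{Res}_{L/K}(a)$, $a_{K_{\mathcal{I}}}:=\mathrm{Res}_{K_{\mathcal{I}}/K}(a)$, and unique $x_L\in\Sha^2(L,\mathbb{Z})$, $x_{K_{\mathcal{I}}}\in\Sha^2(K_{\mathcal{I}},\mathbb{Z})$ (the $x$'s land in $\Sha^2$ because the splittings commute with localization).

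Next I would translate $g(b)=0$ in these coordinates. The map $s_L-s_{K_{\hat{i}}}$, being a difference of sections of \eqref{hat}, has image in that of the first map $\mathbb{Z}\to\mathbb{Z}[E/K]$, so it factors through a morphism $\mu\colon\hat{T}\to\mathbb{Z}$ which is Galois-equivariant over $LK_{\hat{i}}$ (both $s_L$ and $s_{K_{\hat{i}}}$ become defined there). Letting $\tilde\delta:=\mu_{\ast}(\mathrm{Res}_{LK_{\hat{i}}/K}(a))\in\Sha^2(LK_{\hat{i}},\mathbb{Z})$ and $\delta:=\mathrm{Res}_{LK_{\mathcal{I}}/LK_{\hat{i}}}(\tilde\delta)$, the injectivity of $\phi_2$ (which follows from the splitting over $LK_{\mathcal{I}}$) converts $g(b)=0$ into the identity
\[\mathrm{Res}_{LK_{\mathcal{I}}/L}(x_L)+\delta=\mathrm{Res}_{LK_{\mathcal{I}}/K_{\mathcal{I}}}(x_{K_{\mathcal{I}}})\quad\text{in }\Sha^2(LK_{\mathcal{I}},\mathbb{Z}),\]
whose left-hand side is visibly the restriction to $LK_{\mathcal{I}}$ of $\mathrm{Res}_{LK_{\hat{i}}/L}(x_L)+\tilde\delta\in\Sha^2(LK_{\hat{i}},\mathbb{Z})$.

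The surjectivity part of hypothesis \ref{h4} for $LK_{\hat{i}}/K_{\hat{i}}$ then yields $x'\in\Sha^2(K_{\hat{i}},\mathbb{Z})$ with $\mathrm{Res}_{LK_{\hat{i}}/K_{\hat{i}}}(x')=\mathrm{Res}_{LK_{\hat{i}}/L}(x_L)+\tilde\delta$. Further restriction to $LK_{\mathcal{I}}$ shows that $\mathrm{Res}_{K_{\mathcal{I}}/K_{\hat{i}}}(x')-x_{K_{\mathcal{I}}}\in\Sha^2(K_{\mathcal{I}},\mathbb{Z})$ lies in the kernel of $\mathrm{Res}_{LK_{\mathcal{I}}/K_{\mathcal{I}}}$, which by \ref{h3} is killed by $m$. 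Hence $mx_{K_{\mathcal{I}}}=\mathrm{Res}_{K_{\mathcal{I}}/K_{\hat{i}}}(mx')$, and combining this with the built-in compatibility $s_{K_{\mathcal{I}}}(a_{K_{\mathcal{I}}})=\mathrm{Res}_{K_{\mathcal{I}}/K_{\hat{i}}}(s_{K_{\hat{i}}}(\mathrm{Res}_{K_{\hat{i}}/K}(a)))$ gives
\[mb_{K_{\mathcal{I}}}=\mathrm{Res}_{K_{\mathcal{I}}/K_{\hat{i}}}\bigl(\phi_1(mx')+ms_{K_{\hat{i}}}(\mathrm{Res}_{K_{\hat{i}}/K}(a))\bigr)\in\mathrm{Res}_{K_{\mathcal{I}}/K_{\hat{i}}}\bigl(\Sha^2(K_{\hat{i}},\mathbb{Z}[E/K])\bigr),\]
as required. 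The main obstacle is the coordinated choice of splittings that ensures both that the correction term $\delta$ descends to $LK_{\hat{i}}$ and that $s_{K_{\mathcal{I}}}(a_{K_{\mathcal{I}}})$ is visibly a restriction from $K_{\hat{i}}$; once these compatibilities are arranged, the proof is a diagram chase in which the factor $m$ arises precisely from the $m$-torsion kernel provided by \ref{h3}.
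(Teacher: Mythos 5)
Your proof is correct and uses the same ingredients in the same places as the paper's: you explicitly construct the element that the paper calls $b_{K_{\hat i}}$ (your $\phi_{K_{\hat i}}(x')+s_{K_{\hat i}}(\mathrm{Res}_{K_{\hat i}/K}(a))$) via a chosen splitting, where the paper obtains it through a diagram chase using exactness of the columns and the surjectivity of $g_1^i$ coming from \ref{h4}, and then both arguments kill the remaining discrepancy in $\Sha^2(K_{\mathcal I},\mathbb Z)$ by the $m$-torsion statement in \ref{h3}. The only difference is stylistic: you work with concrete splittings and a correction term $\mu_*$, while the paper phrases the identical chase abstractly; the hypotheses invoked and the structure of the argument coincide.
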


\begin{proof}
Consider the following commutative diagram, constructed exactly in the same way as diagram \eqref{diagram}:
\[\xymatrix{
& 0\ar[d] & 0\ar[d]\\
 \Sha^2(K,\mathbb{Z}) \ar[r]^-{f_1^i}\ar[d]^-{\phi_0}& \Sha^2(L,\mathbb{Z})\oplus \Sha^2(K_{\hat{i}},\mathbb{Z}) \ar@{->>}[r]^-{g_1^i}\ar[d]^-{\phi_1^i}& \Sha^2(LK_{\hat{i}},\mathbb{Z})\ar[d]^-{\phi_2^i}\\
 \Sha^2(K, \mathbb{Z}[E/K]) \ar[r]^<<<<<{f^i} \ar[d]^-{\psi_0}& \Sha^2(L, \mathbb{Z}[E/K])\oplus \Sha^2(K_{\hat{i}}, \mathbb{Z}[E/K]) \ar[r]^<<<<<{g^i}\ar[d]^-{\psi_1^i}& \Sha^2(LK_{\hat{i}}, \mathbb{Z}[E/K])\ar[d]^-{\psi_2^i}\\
 \Sha^2(K,\hat{T}) \ar@{^{(}->}[r]^>>>>>>>>>>>>>>>>{f_0^i}& \Sha^2(L,\hat{T})\oplus \Sha^2(K_{\hat{i}},\hat{T}) \ar[r]^>>>>>>>>>>>{g_0^i}\ar[d]& \Sha^2(LK_{\hat{i}},\hat{T})\ar[d]\\
 & 0 & 0
}\]
The last two columns are exact since the exact sequence (\ref{hat}) splits over $L$, $K_{\hat{i}}$ and $LK_{\hat{i}}$, and the restriction morphism $\Sha^2(K_{\hat{i}},\mathbb{Z}) \rightarrow \Sha^2(LK_{\hat{i}},\mathbb{Z})$ is surjective by \ref{h4}. Hence there exists $b_{K_{\hat{i}}} \in \Sha^2(K_{\hat{i}}, \mathbb{Z}[E/K])$ such that $\psi_1^i(b_L,b_{K_{\hat i}})=f_0^i(a)$ and $g^i(b_L,b_{K_{\hat i}})=0$. The pair: $$(0,b_{K_{\mathcal{I}}}-\mathrm{Res}_{K_{\mathcal{I}}/K_{\hat{i}}}(b_{K_{\hat{i}}})) \in \Sha^2(L, \mathbb{Z}[E/K])\oplus \Sha^2(K_{\mathcal{I}}, \mathbb{Z}[E/K])$$ then lies in $\ker(g) \cap \ker(\psi_1)$ and a diagram chase in \eqref{diagram} shows that there exists $c\in \Sha^2(K_{\mathcal{I}},  \mathbb{Z})$ such that:
$$\begin{cases}
\phi_1(0,c)=(0,b_{K_{\mathcal{I}}}-\mathrm{Res}_{K_{\mathcal{I}}/K_{\hat{i}}}(b_{K_{\hat{i}}}))\\
\mathrm{Res}_{LK_{\mathcal{I}}/K_{\mathcal{I}}}(c)=0.
\end{cases}$$
By \ref{h3}, we have $mc=0$, and hence:
$m\cdot (b_{K_{\mathcal{I}}}-\mathrm{Res}_{K_{\mathcal{I}}/K_{\hat i}}(b_{K_{\hat i}}))=0$.
\end{proof}

\begin{lemma}\label{verytech}
Set $\mu:=m\vee m'$ and take $a\in \Sha^2(K,\hat{T})$. Then $\mu a \in \mathrm{Im}(\psi_0)$.
\end{lemma}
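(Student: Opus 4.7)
The strategy is to exploit the injectivity of $f_0$ from hypothesis \ref{h1}: by commutativity of \eqref{diagram}, to show $\mu a \in \mathrm{Im}(\psi_0)$ it suffices to find $b' \in \Sha^2(K, \mathbb{Z}[E/K])$ with $\mu b - f(b') \in \ker(\psi_1) = \mathrm{Im}(\phi_1)$, for some lift $b = (b_L, b_{K_{\mathcal{I}}})$ of $f_0(a)$ along $\psi_1$. The proof begins exactly as in Lemma \ref{provient}: using the exactness of the middle column of \eqref{diagram}, lift $f_0(a)$ to some $b$; the surjectivity of $g_1$ from \ref{h3} together with a diagram chase lets us adjust $b$ by an element of $\mathrm{Im}(\phi_1) = \ker(\psi_1)$ (which does not alter $\psi_1(b)=f_0(a)$) so that additionally $g(b) = 0$ in $\Sha^2(LK_{\mathcal{I}}, \mathbb{Z}[E/K])$.

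For the $K_{\mathcal{I}}$-component, Lemma \ref{provient} provides, for each $i$, an element $\beta_i \in \Sha^2(K_{\hat{i}}, \mathbb{Z}[E/K])$ with $\mathrm{Res}_{K_{\mathcal{I}}/K_{\hat{i}}}(\beta_i) = m b_{K_{\mathcal{I}}}$. The pairwise linear disjointness \ref{h2} between $K_i$ and $K_{\hat{i}}$ over $K$ should allow one to inductively combine these $r$ local descents into a single $\beta \in \Sha^2(K, \mathbb{Z}[E/K])$ with $\mathrm{Res}_{K_{\mathcal{I}}/K}(\beta) = m b_{K_{\mathcal{I}}}$, via a Mayer--Vietoris style argument that uses \ref{h3} and \ref{h4} to bound the relevant kernels and cokernels of restriction along the intermediate extensions $K_i, K_{\hat{i}}$.

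For the $L$-component, applying $\mathrm{Res}_{LK_{\mathcal{I}}/L}$ to $m b_L$ and using $g(b)=0$ together with the previous step yields
\begin{equation*}
\mathrm{Res}_{LK_{\mathcal{I}}/L}(m b_L - \mathrm{Res}_{L/K}(\beta)) = 0
\end{equation*}
in $\Sha^2(LK_{\mathcal{I}}, \mathbb{Z}[E/K])$. Decomposing $\mathbb{Z}[E/K] = \mathbb{Z}[L/K] \oplus \bigoplus_i \mathbb{Z}[K_i/K]$ into permutation modules and applying Shapiro's lemma, the kernel of $\mathrm{Res}_{LK_{\mathcal{I}}/L}$ on $\Sha^2(L, \mathbb{Z}[L/K])$ becomes a product of kernels of maps $\mathrm{Res}_{L'K_{\mathcal{I}}/L'}$ for various finite extensions $L'$ of $L$ contained in the Galois closure of $L/K$, and these are $m'$-torsion by \ref{h5}; the $K_i$-summands can be handled analogously with the help of \ref{h4}. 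Hence $m'(m b_L - \mathrm{Res}_{L/K}(\beta))$ lies in the image of $\phi_1$ on the $L$-factor. Setting $b' := (\mu/m)\beta$ and correcting it, if necessary, by an element of $\ker(\mathrm{Res}_{K_{\mathcal{I}}/K})$ to absorb the residual $m'$-factor on the $L$-side, we obtain $\mu b - f(b') \in \mathrm{Im}(\phi_1)$; applying $\psi_1$ and invoking injectivity of $f_0$ then gives $\mu a = \psi_0(b')$.

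The main obstacle I foresee is precisely this last reconciliation: a naive combination of the factors $m$ (from the $K_{\mathcal{I}}$-side descent) and $m'$ (from the $L$-side kernel) produces $mm'$, whereas the sharp bound claimed is $\mu = m \vee m'$. Obtaining this sharp bound should require careful use of the linear-disjointness hypothesis \ref{ld1}, ensuring that the Galois closure of $L/K$ interacts cleanly with $K_{\mathcal{I}}$, so that the descent steps on the $L$- and $K_{\mathcal{I}}$-sides can be carried out with genuinely independent data and the two factors combine as their least common multiple rather than as a plain product.
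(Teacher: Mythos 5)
Your plan has a genuine gap, and you have in fact put your finger on it yourself in the last paragraph: the "Mayer--Vietoris style" descent of $mb_{K_{\mathcal I}}$ from the various $\Sha^2(K_{\hat i},\mathbb Z[E/K])$ down to $\Sha^2(K,\mathbb Z[E/K])$ is not justified (there is no reason the local descents $\beta_i$ glue), and the subsequent reconciliation with the $L$-side would naturally produce $mm'$ rather than $\mu=m\vee m'$. Both problems stem from the same strategic choice: you are trying to \emph{construct} a preimage of (a multiple of) $b$ by descent, first on the $K_{\mathcal I}$-side and then on the $L$-side, and then to compare the two.

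The paper's argument avoids descent entirely. After taking a lift $b=(b_L,b_{K_{\mathcal I}})$ with $g(b)=0$ (your first paragraph is the same as the paper here), decompose $b_L\in\Sha^2(L,\mathbb Z[E/K])$ and $b_{K_{\mathcal I}}\in\Sha^2(K_{\mathcal I},\mathbb Z[E/K])$ into their Shapiro components $((x_\alpha)_\alpha,(y_i)_i)$ and $(z,(t_{ij})_{i,j})$. The candidate preimage in $\Sha^2(K,\mathbb Z[E/K])=\Sha^2(L,\mathbb Z)\oplus\bigoplus_i\Sha^2(K_i,\mathbb Z)$ is then read off \emph{directly from $b_L$}: take $x_1$ (the $L_1=L$ component of $b_L$) together with preimages $\tilde y_i\in\Sha^2(K_i,\mathbb Z)$ of the $LK_i$-components $y_i$, provided by the surjectivity in \ref{h4}. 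The whole point is that the relation $g(b)=0$ forces $z=\mathrm{Res}_{LK_{\mathcal I}/L}(x_1)$ on the nose, so one only has to verify $\mu b=\mu f(x_1,(\tilde y_i)_i)$ component by component: the $L_\alpha$-components require \ref{h5} and cost a factor $m'$; the $LK_i$- and $LK_{\mathcal I}$-components are exact by construction; the $K_{ij}$-components for $j=1$ require \ref{h3} and cost a factor $m$; and the $K_{ij}$-components for $j\neq 1$ are reduced to the case $j=1$ via Lemma~\ref{provient}, again at cost $m$. Since the $m'$-loss and the $m$-loss occur on \emph{disjoint} collections of Shapiro components, the total cost is $\mu=m\vee m'$, not $mm'$. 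Finally, applying $\psi_1$ and the injectivity of $f_0$ from \ref{h1} gives $\mu a=\mu\psi_0(x_1,(\tilde y_i)_i)\in\mathrm{Im}(\psi_0)$. To fix your proof you should drop the attempt to descend $mb_{K_{\mathcal I}}$ altogether, use Lemma~\ref{provient} only as the paper does (to relate the $t_{ij}$ for $j\neq1$ to $t_{i,1}$), and carry out the comparison with $f(x_1,(\tilde y_i)_i)$ at the level of Shapiro components.
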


Before proving the lemma, let us introduce some notation.

\begin{notation}
\begin{itemize}
    \item[(i)] For each $i$, we can find a family $(K_{ij})_j$ of finite extensions of $K_\mathcal{I}$ together with embeddings $\sigma_{ij}: K_i \hookrightarrow K_{ij}$ so that $K_{i,1}=K_\mathcal{I}$, the embedding $\sigma_{i,1}$ is the natural embedding $K_i\hookrightarrow K_\mathcal{I}$, and the $K$-algebra homomorphism:
    \begin{align*}    
K_i \otimes_K K_\mathcal{I} &\rightarrow \prod_j K_{ij}\\
x \otimes y &\mapsto (\sigma_{ij}(x)y)_j
\end{align*} 
is an isomorphism. We denote by $\tilde{\sigma}_{ij}: K_\mathcal{I} \rightarrow K_{ij}$ the embedding obtained by tensoring $\sigma_{ij}$ with the identity of $K_{\hat{i}}$. This is well-defined by \ref{h2}.
\item[(ii)] For each $i,j$, we can find a family $(L_{ijj'})_{j'}$ of finite extensions of $K_{ij}$ together with embeddings $\sigma_{ijj'}: L \hookrightarrow L_{ijj'}$ so that the $K$-algebra homomorphism:
    \begin{align} \label{tenseur}
    \begin{split}
L \otimes_K K_{ij} &\rightarrow \prod_{j'} L_{ijj'}\\
x \otimes y &\mapsto (\sigma_{ijj'}(x)y)_{j'},
\end{split}
\end{align}
is an isomorphism. We denote by $\tilde{\sigma}_{ijj'}: LK_i \rightarrow L_{ijj'}$ the embedding obtained by tensoring $\sigma_{ijj'}$ with $\sigma_{ij}$.
Observe that, when $j=1$, the $K$-algebra homomorphism \eqref{tenseur} is simply the isomorphism $L \otimes_K K_{\mathcal{I}} \cong LK_\mathcal{I}$, so that $\sigma_{i,1,1}$ is none other than the inclusion of $L$ in $LK_\mathcal{I}$.
    \item[(iii)] We can find a family of finite extensions $(L_{\alpha})_\alpha$ of $L$ together with embeddings $\tau_{\alpha}: L \hookrightarrow L_\alpha$ so that $L_1=L$, the embedding $\tau_1$ is the identity of $L$, and the $K$-algebra homormorphism:
    \begin{align*}    
L \otimes_K L &\rightarrow \prod_\alpha L_\alpha\\
x \otimes y &\mapsto (\tau_{\alpha}(x)y)_\alpha
\end{align*} 
is an isomorphism. For each $\alpha$, we denote by $\tilde{\tau}_\alpha: LK_\mathcal{I} \rightarrow L_\alpha K_\mathcal{I}$ the embedding obtained by tensoring $\tau_\alpha$ with the identity of $K_\mathcal{I}$. This is well-defined by \ref{ld1}.
\end{itemize}
\end{notation}

\begin{proof}
By Shapiro's Lemma, one can identify the second line of diagram \eqref{diagram} with the following complex:
\[\xymatrix{\displaystyle
\Sha^2(L,\mathbb{Z}) \oplus \bigoplus_i \Sha^2(K_i,\mathbb{Z}) \ar[d]^f \\
\displaystyle\bigoplus_\alpha \Sha^2(L_\alpha,\mathbb{Z}) \oplus \bigoplus_i \Sha^2(LK_i,\mathbb{Z}) \oplus \Sha^2(LK_{\mathcal{I}},\mathbb{Z}) \oplus \bigoplus_{i,j} \Sha^2(K_{ij},\mathbb{Z}) \ar[d]^g \\
\displaystyle\bigoplus_{\alpha}\Sha^2(L_{\alpha}K_{\mathcal I},\mathbb{Z}) \oplus \bigoplus_{i,j,j'} \Sha^2(L_{ijj'},\mathbb{Z})
}\]
where $f$ is given by:
$$(x,(y_i)_i) \mapsto \left(\left((\mathrm{Res}_{\tau_\alpha:L \hookrightarrow L_\alpha}(x)\right)_\alpha, \left(\mathrm{Res}_{LK_i/K_i}(y_i)\right)_i,  \mathrm{Res}_{LK_{\mathcal{I}}/L}(x), \left(\mathrm{Res}_{\sigma_{ij}: K_i \hookrightarrow K_{ij}}(y_i)\right)_{i,j}\right),$$
and $g$:
\begin{align*}
((&x_{\alpha})_{\alpha}, (y_i)_i, z, (t_{ij})_{i,j}) \mapsto\\& \left( \left(\mathrm{Res}_{L_{\alpha}K_\mathcal{I}/L_{\alpha}}(x_\alpha)-\mathrm{Res}_{ \tilde{\tau}_\alpha:LK_\mathcal{I}\hookrightarrow L_{\alpha}K_\mathcal{I}}(z)\right)_{\alpha},  \left( \mathrm{Res}_{\tilde{\sigma}_{ijj'}:LK_i\hookrightarrow L_{ijj'}}(y_i) - \mathrm{Res}_{L_{ijj'}/K_{ij}}(t_{ij}) \right)_{i,j} \right).
\end{align*}
Now take:
$$((x_{\alpha})_{\alpha}, (y_i)_i, z, (t_{ij})_{i,j})\in \bigoplus_\alpha \Sha^2(L_\alpha,\mathbb{Z}) \oplus \bigoplus_i \Sha^2(LK_i,\mathbb{Z}) \oplus \Sha^2(LK_{\mathcal{I}},\mathbb{Z}) \oplus \bigoplus_{i,j} \Sha^2(K_{ij},\mathbb{Z})$$
such that:
\begin{equation}\label{eqn hip1 lema}
\psi_1((x_{\alpha})_{\alpha}, (y_i)_i, z, (t_{ij})_{i,j})=f_0(a).
\end{equation}
Since $g_0(f_0(a))=0$ and $g_1$ is surjective, a diagram chase in \eqref{diagram} allows to assume that:
\begin{equation}\label{eqn hip2 lema}
((x_{\alpha})_{\alpha}, (y_i)_i, z, (t_{ij})_{i,j})\in \ker(g).
\end{equation}
This implies that:
\begin{numcases}{}
\mathrm{Res}_{L_{\alpha}K_\mathcal{I}/L_{\alpha}}(x_\alpha)=\mathrm{Res}_{ \tilde{\tau}_\alpha:LK_\mathcal{I}\hookrightarrow L_{\alpha}K_\mathcal{I}}(z),\quad\forall\, \alpha,\label{eqz}\\
  \mathrm{Res}_{\tilde{\sigma}_{ijj'}:LK_i\hookrightarrow L_{ijj'}}(y_i) = \mathrm{Res}_{L_{ijj'}/K_{ij}}(t_{ij}),\quad\forall\,i,j,j'. \label{eqt}
\end{numcases}
In particular,
\begin{equation}\label{eqn eqx1}
\mathrm{Res}_{L_{1}K_\mathcal{I}/L_{1}}(x_1)=\mathrm{Res}_{LK_\mathcal{I}/L}(x_1)=z,
\end{equation}
and hence the commutativity of the following diagram of field extensions:
\begin{equation*}
    \xymatrix@=1em{
    & L_\alpha K_\mathcal{I}\ar@{-}[dr]^{\tilde{\tau}_\alpha} \ar@{-}[dl] & \\
    L_\alpha\ar@{-}[dr]_{\tau_\alpha} & & LK_\mathcal{I}\ar@{-}[dl] \\
    & L &
    }
\end{equation*}
shows that:
\begin{align*}
    \mathrm{Res}_{L_{\alpha}K_\mathcal{I}/L_{\alpha}}(\mathrm{Res}_{\tau_\alpha:L \hookrightarrow L_{\alpha}}(x_1))&=\mathrm{Res}_{\tilde{\tau}_\alpha:LK_\mathcal{I}\hookrightarrow L_{\alpha}K_\mathcal{I}}(\mathrm{Res}_{LK_\mathcal{I}/L}(x_1))\\&=\mathrm{Res}_{\tilde{\tau}_\alpha:LK_\mathcal{I}\hookrightarrow L_{\alpha}K_\mathcal{I}}(z)\\&=\mathrm{Res}_{L_{\alpha}K_\mathcal{I}/L_{\alpha}}(x_\alpha).
\end{align*}
Since the kernel of $\mathrm{Res}_{L_{\alpha}K_\mathcal{I}/L_{\alpha}}$ is $m'$-torsion by \ref{h5}, we have:
\begin{equation}\label{eqn eqxalpha}
m'\mathrm{Res}_{\tau_\alpha:L \hookrightarrow L_{\alpha}}(x_1)=m'x_\alpha
\end{equation}
for all $\alpha$. Moreover, by \ref{h4}, one can find for each $i$ an element $\tilde{y}_i\in\Sha^2(K_i,\Z)$ such that: 
\begin{equation}\label{eqn eqy}
y_i=\mathrm{Res}_{LK_i/K_i}(\tilde{y}_i).
\end{equation}
Let us check that:
\begin{equation}\label{0}
\mu(( x_\alpha)_\alpha, ( y_i)_i,  z, ( t_{ij})_{i,j}) = \mu f\left( x_1,( \tilde{y}_i)_i\right).
\end{equation}

By construction (cf.~equations \eqref{eqn eqxalpha}, \eqref{eqn eqy} and \eqref{eqn eqx1}), we have:
\begin{gather*}
\mu (\mathrm{Res}_{\tau_\alpha:L \hookrightarrow L_{\alpha}}(x_1))_{\alpha}=\mu( x_\alpha)_\alpha,\\
(y_i)_i=\left(\mathrm{Res}_{LK_i/K_i}(\tilde{y}_i)\right)_i,\\
\mu\mathrm{Res}_{LK_{\mathcal{I}}/L}( x_1)=\mu z.
\end{gather*}
To finish the proof of \eqref{0}, it is therefore enough to check that:
\begin{equation}\label{4}
m t_{ij}=m \mathrm{Res}_{\sigma_{ij}: K_i \hookrightarrow K_{ij}}(\tilde{y}_i)
\end{equation}
for each $i$ and $j$. For that purpose, fix $i=i_0$, and consider first the case $j=1$. We then have $K_{i_0,1}=K_{\mathcal{I}}$, and hence, by using \eqref{eqt}:
\begin{multline*}
\mathrm{Res}_{LK_{\mathcal{I}}/K_{\mathcal{I}}}(t_{i_0,1})=\mathrm{Res}_{L_{i_0,1,1}/LK_{i_0}}(y_{i_0})\\ = \mathrm{Res}_{L_{i_0,1,1}/K_{i_0}}(\tilde{y}_{i_0}) = \mathrm{Res}_{LK_{\mathcal{I}}/K_{\mathcal{I}}}(\mathrm{Res}_{K_{i_0,1}/K_{i_0}}(\tilde{y}_{i_0})).
\end{multline*}
By \ref{h3}, we deduce that:
$$mt_{i_0,1}=m\mathrm{Res}_{K_{i_0,1}/K_{i_0}}(\tilde{y}_{i_0})=m\mathrm{Res}_{K_{\mathcal{I}}/K_{i_0}}(\tilde{y}_{i_0}).$$
Now move on to case of arbitrary $j$. By Lemma \ref{provient} together with equations \eqref{eqn hip1 lema} and \eqref{eqn hip2 lema}, the element
\[m(z,(t_{i,j})_{i,j})\in \Sha^2(LK_{\mathcal{I}},\mathbb{Z}) \oplus \bigoplus_{i,j} \Sha^2(K_{ij},\mathbb{Z}) = \Sha^2(K_\mathcal{I},\mathbb{Z}[E/K])\]
comes by restriction from $\Sha^2(K_{\hat{i}},\mathbb{Z}[E/K])$ for each $i$. In particular, the element:
\[(mt_{i_0,j})_j\in \bigoplus_j \Sha^2(K_{i_0,j},\mathbb{Z}) = \Sha^2(K_\mathcal{I},\mathbb{Z}[K_{i_0}/K])\]
comes by restriction from an element $t_{i_0}\in \Sha^2(K_{\mathcal{I}},\mathbb{Z})=\Sha^2(K_{\hat{i}_0},\mathbb{Z}[K_{i_0}/K])$. In other words:
$$(mt_{i_0,j})_j = (\mathrm{Res}_{\tilde{\sigma}_{i_0,j}: K_{\mathcal{I}} \hookrightarrow K_{i_0,j}}(t_{i_0}))_j. $$
In particular, $mt_{i_0,1} = t_{i_0}$, and hence for each $j$:
\begin{align*}
   mt_{i_0,j} &=  \mathrm{Res}_{\tilde{\sigma}_{i_0,j}: K_\mathcal{I} \hookrightarrow K_{i_0,j}}(t_{i_0})\\
   &=\mathrm{Res}_{\tilde{\sigma}_{i_0,j}: K_\mathcal{I} \hookrightarrow K_{i_0,j}}(mt_{i_0,1})\\
   &=\mathrm{Res}_{\tilde{\sigma}_{i_0,j}: K_\mathcal{I} \hookrightarrow K_{i_0,j}}(m\mathrm{Res}_{K_\mathcal{I}/K_{i_0}}(\tilde{y}_{i_0}))\\
      &=m\mathrm{Res}_{\sigma_{i_0,j}: K_{i_0} \hookrightarrow K_{i_0,j}}(\tilde{y}_{i_0}).
\end{align*}
This finishes the proofs of equalities \eqref{4} and \eqref{0}. Applying $\psi_1$ to \eqref{0} we deduce that:
$$\mu f_0(\alpha)=\mu f_0(\psi_0(( x_\alpha)_\alpha, ( y_i)_i,  z, ( t_{ij})_{i,j})).$$
Since $f_0$ is injective, we get:
$$\mu \alpha=\mu \psi_0(( x_\alpha)_\alpha, ( y_i)_i,  z, ( t_{ij})_{i,j}),$$
which finishes the proof of the lemma.
\end{proof}

We can now finish the proof of Proposition \ref{shadiv}. As recalled at the end of section \ref{sec prel}, the group $\Sha^2(K, \mathbb{Z}[E/K])$ is divisible and hence, by Lemma \ref{verytech}:
$$(m\vee m')\cdot  \Sha^2(K,\hat{T}) \subseteq \Sha^2(K,\hat{T})_{\mathrm{div}}.$$
In other words, the group $\overline{\Sha^2(K,\hat{T})}$ is $(m\vee m')$-torsion.

On the other hand, using once again the end of section \ref{sec prel}, the group $\overline{\Sha^2(L,\hat{T})}$ vanishes. Hence, by restriction-corestriction, $\overline{\Sha^2(K,\hat{T})}$ is $n$-torsion. We deduce that $\overline{\Sha^2(K,\hat{T})}$ is $\left((m\vee m')\wedge n\right)$-torsion.
\end{proof}

The following lemma will often allow us to check assumptions \ref{h3} and \ref{h4} of Proposition \ref{shadiv}:

\begin{lemma}\label{res1}
Let $l$ be a finite unramified extension of $k$ of degree $n$ and set $L=lK$. The restriction map $\mathrm{Res}_{L/K}: \Sha^2(K,\mathbb{Z}) \rightarrow \Sha^2(L,\mathbb{Z})$ is surjective and its kernel is $(\ires(C)\wedge n)$-torsion.
\end{lemma}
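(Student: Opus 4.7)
The plan is to pass to characters via the Bockstein isomorphism $H^2(F,\mathbb{Z})\cong H^1(F,\mathbb{Q}/\mathbb{Z})$ (coming from $0\to\mathbb{Z}\to\mathbb{Q}\to\mathbb{Q}/\mathbb{Z}\to 0$ and the vanishing $H^i(F,\mathbb{Q})=0$ for all $i\geq 1$), under which $\Sha^2(K,\mathbb{Z})$ becomes the group of continuous characters $\chi\colon G_K\to\mathbb{Q}/\mathbb{Z}$ that vanish on $G_{K_v}$ for each $v\in C^{(1)}$. Since $l/k$ is unramified of degree $n$, the extension $L/K$ is Galois with cyclic Galois group $G:=\mathrm{Gal}(L/K)\cong \mathbb{Z}/n$; and at each $v\in C^{(1)}$ with residual degree $f_v$ of $k(v)/k$, the étale $K_v$-algebra $L\otimes_K K_v$ is a product of $\gcd(n,f_v)$ unramified local extensions $L_w/K_v$ of degree $n/\gcd(n,f_v)$, so the decomposition subgroup $D_v\leq G$ is the unique cyclic subgroup of order $n/\gcd(n,f_v)$.

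For the kernel, the inflation--restriction sequence applied to $1\to G_L\to G_K\to G\to 1$, together with $H^1(G_L,\mathbb{Z})=\mathrm{Hom}_{\mathrm{cts}}(G_L,\mathbb{Z})=0$, identifies $\ker(\mathrm{Res}_{L/K}\colon H^2(K,\mathbb{Z})\to H^2(L,\mathbb{Z}))$ with $H^2(G,\mathbb{Z})\cong \mathrm{Hom}(G,\mathbb{Q}/\mathbb{Z})\cong\mathbb{Z}/n$, consisting of those characters of $G_K$ that factor through $G$. A character $\chi$ in this kernel lies in $\Sha^2(K,\mathbb{Z})$ exactly when $\chi|_{D_v}=0$ for every $v\in C^{(1)}$, equivalently when the order of $\chi$ divides $[G:D_v]=\gcd(n,f_v)$ for every $v$, hence when it divides $\gcd_v\gcd(n,f_v)=\gcd(n,\ires(C))=\ires(C)\wedge n$. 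This gives the desired kernel bound.

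For the surjectivity, I plan to use the short exact sequence $0\to\mathbb{Z}\xrightarrow{N_G}\mathbb{Z}[L/K]\to \check T\to 0$ introduced in the proof of Proposition \ref{resid} (applied with $K'=L$). Via Shapiro's Lemma, the associated long exact cohomology sequence
\[ H^2(K,\mathbb{Z})\xrightarrow{\mathrm{Res}} H^2(L,\mathbb{Z})\xrightarrow{\delta} H^2(K,\check T)\to H^3(K,\mathbb{Z}) \]
is compatible with localization at each $v\in C^{(1)}$, so that the image $\delta(\beta)$ of any $\beta\in\Sha^2(L,\mathbb{Z})$ already lies in $\Sha^2(K,\check T)$. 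I then combine this observation with the Hochschild--Serre surjection $H^2(K,\mathbb{Z})\twoheadrightarrow H^2(L,\mathbb{Z})^G$ (valid because $H^3(G,\mathbb{Z})=0$ for cyclic $G$), with the divisibility of $\Sha^2(K,\mathbb{Z})$ and $\Sha^2(L,\mathbb{Z})$ (Beilinson--Lichtenbaum, cf.~Section~\ref{sec prel}), and with the standard identities $\mathrm{Cor}_{L/K}\circ\mathrm{Res}_{L/K}=n$ and $\mathrm{Res}_{L/K}\circ\mathrm{Cor}_{L/K}=N_G$, in order to lift $\beta$ to some $\alpha\in H^2(K,\mathbb{Z})$ and then modify $\alpha$ by an appropriate element of $\mathbb{Z}/n=\ker(\mathrm{Res})$ so as to force every local restriction $\alpha|_{K_v}$ to vanish, thereby producing a lift in $\Sha^2(K,\mathbb{Z})$.

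The main obstacle is this final adjustment: the local obstructions $\alpha|_{K_v}\in\ker(\mathrm{Res}_{L_w/K_v})\cong \mathbb{Z}/(n/\gcd(n,f_v))$, viewed as characters of the cyclic decomposition subgroups $D_v\leq G$, must be shown to come from a single character of $G$, so that one global correction $\chi\in\mathbb{Z}/n$ simultaneously kills all of them. This global compatibility is where the explicit description of the $D_v$ afforded by the unramifiedness of $L/K$ at every $v\in C^{(1)}$, together with the divisibility of $\Sha^2(L,\mathbb{Z})$, both become essential.
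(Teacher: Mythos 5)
Your treatment of the kernel is correct and is essentially the paper's argument recast in the language of characters: after the identification $\Sha^2(\cdot,\mathbb{Z})\cong\Sha^1(\cdot,\mathbb{Q}/\mathbb{Z})$, an element of $\ker(\mathrm{Res}_{L/K})$ corresponds to a character of $\mathrm{Gal}(L/K)\cong\mathbb{Z}/n$, i.e.\ to a cyclic subextension $l'K/K$ with $l'\subset l$, and the local-triviality condition forces $[l':k]$ to divide every $f_v$ and hence $\ires(C)$. This matches the paper.

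The surjectivity part, however, is where the paper and your proposal diverge, and your sketch has genuine, unresolved gaps. The paper's proof does not proceed by a lifting-and-correcting argument at all: it invokes Kato's structure theorem (Corollary~2.9 of the cited Hasse-principle paper) to identify $\Sha^2(K,\mathbb{Z})\cong(\mathbb{Q}/\mathbb{Z})^c$ and $\Sha^2(L,\mathbb{Z})\cong(\mathbb{Q}/\mathbb{Z})^c$ with the \emph{same} corank $c$ (the genus of the reduction graph of a fixed regular model, unchanged by the unramified base change $\mathcal{O}_k\to\mathcal{O}_l$), and then deduces surjectivity from the fact that a divisible homomorphism $(\mathbb{Q}/\mathbb{Z})^c\to(\mathbb{Q}/\mathbb{Z})^c$ with finite kernel is automatically onto. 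This equality of coranks is the decisive quantitative input, and nothing in your sketch replaces it. Concretely, your plan has two holes. First, to produce a preimage $\alpha\in H^2(K,\mathbb{Z})$ of $\beta$ via Hochschild--Serre you need $\beta\in H^2(L,\mathbb{Z})^G$; but the claim that every element of $\Sha^2(L,\mathbb{Z})$ is $G$-invariant is equivalent, a posteriori, to the very surjectivity you are trying to prove (the image of $\mathrm{Res}_{L/K}$ lands in the $G$-invariants), so you cannot assume it. Second, and as you yourself flag, the ``final adjustment'' requires the family of local defects $\alpha|_{K_v}\in\widehat{D_v}$ to be coherent enough to be corrected by a single character of $G$; for a cyclic $G$ this amounts to a Chinese-Remainder compatibility among the residues $a_v\bmod |D_v|$, and you give no mechanism forcing it. Such a coherence would be a reciprocity-type constraint, which in this setting is exactly what Kato's complex encodes; without that input there is no reason for it to hold, and indeed if $\Sha^2(L,\mathbb{Z})$ had strictly larger corank than $\Sha^2(K,\mathbb{Z})$ the map could not be surjective for purely cardinality reasons even though all your formal ingredients (divisibility, Hochschild--Serre, restriction--corestriction) would still be available. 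So the surjectivity half of your proposal is not a proof; you should either supply the global compatibility argument or, more realistically, follow the paper and bring in Kato's computation of $\Sha^3(K,\mathbb{Z}/m\mathbb{Z}(2))$.
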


\begin{proof}
By restriction-corestriction, $\ker(\mathrm{Res}_{L/K})$ is killed by $n$. Moreover, since $\Sha^2(K,\Z)=\Sha^1(K,\Q/\Z)$, an element in $\ker(\mathrm{Res}_{L/K})$ corresponds to a subextension $K\subset L'\subset L$ that is locally trivial at every closed point of the curve $C$. Since $L=lK$, we can find an extension $k\subset l' \subset l$ such that $L'=l'K$. By the local triviality of $L'/K$, the field $l'$ has to be contained in the residue field of $k(v)$ for every $v \in C^{(1)}$. In particular, $[l':k]$ and $[L':K]$ divide $\ires(C)$. This shows that  $\ker(\mathrm{Res}_{L/K})$ is killed by $\ires(C)$, and hence by $\ires(C)\wedge n$.

In order to prove the surjectivity statement, consider an integral, regular, projective model $\mathcal{C}$ of $C$ such that its reduced special fibre $C_0$ is an SNC divisor. Let $c$ be the genus of the reduction graph of $\mathcal{C}$. According to Corollary 2.9 of \cite{Kato}, for each $m \geq 1$, we have an isomorphism:
$$\Sha^3(K,\mathbb{Z}/m\mathbb{Z}(2)) \cong (\mathbb{Z}/m\mathbb{Z})^{c}.$$
Hence, by Poitou-Tate duality, we also have:
$$\Sha^1(K,\mathbb{Z}/m\mathbb{Z}) \cong (\mathbb{Z}/m\mathbb{Z})^{c},$$
so that:
$$\Sha^2(K,\mathbb{Z}) \cong (\mathbb{Q}/\mathbb{Z})^{c}.$$
Since $l/k$ is unramified, the scheme $\mathcal{C} \times_{\mathcal{O}_k} \mathcal{O}_l$ is a suitable regular model of $C \times_k l$ and hence $\Sha^2(L,\mathbb{Z})$ is also isomorphic to $(\mathbb{Q}/\mathbb{Z})^c$. The surjectivity of $\mathrm{Res}_{L/K}$ then follows from the isomorphism $\Sha^2(K,\mathbb{Z}) \cong \Sha^2(L,\mathbb{Z})\cong (\mathbb{Q}/\mathbb{Z})^{c}$ and the finiteness of the exponent of $\ker(\mathrm{Res}_{L/K})$.
\end{proof}

\subsubsection{Step 5: Proof of Theorem \ref{corint} for smooth proper varieties}\label{subsubsec lisse}

In this step, we use Poitou-Tate duality to deduce Theorem \ref{corint} for smooth proper varieties from the previous steps:

\begin{theorem}\label{thint}
Let $l/k$ be a finite unramified extension and set $L:=lK$. Let $Z$ be a smooth proper integral $K$-variety. Then the quotient:
$$\K(K)/\langle N_{L/K}(\K(L)), N_2(Z/K) \rangle$$
is $\chi_K(Z,E)^2$-torsion for every coherent sheaf $E$ on $Z$.
\end{theorem}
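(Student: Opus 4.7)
The plan is to execute the four-step strategy outlined in the introduction, assembling the tools developed in the preceding subsections.

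I would first apply Proposition \ref{resid} to reduce to the case $\ires(C) = 1$: since the norm map $N_{k'K/K}$ is surjective on $\K$ for $k'/k$ the unramified extension of degree $\ires(C)$, it suffices to prove the theorem over $k'K$, where $\ires(C \times_k k') = 1$ and the base change of $L$ to $k'K$ is still of the form $l'(k'K)$ with $l'/k'$ unramified. This reduction matters because Lemma \ref{res1} then makes $\mathrm{Res}_{L/K}\colon \Sha^2(K,\mathbb{Z}) \to \Sha^2(L,\mathbb{Z})$ an isomorphism, which is what we will need to verify the hypotheses of Proposition \ref{shadiv}. Fix $x \in \K(K)$ and set $n := \chi_K(Z,E)$; the aim is to prove $n^2 x \in \langle N_{L/K}(\K(L)),\, N_2(Z/K)\rangle$.

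Next, by Proposition \ref{proploc}, for each $v \in C^{(1)}$ there exist finite extensions $M_1^{(v)},\ldots,M_{r_v}^{(v)}$ of $K_v$ with $Z(M_i^{(v)}) \neq \emptyset$ such that $nx$ lies in the subgroup generated by $N_{L_v/K_v}(\K(L_v))$ and the $N_{M_i^{(v)}/K_v}(\K(M_i^{(v)}))$. I would then globalize these local extensions using Proposition \ref{locglob} (which crucially exploits the smoothness of $Z$): taking $T \subseteq C^{(1)}$ a finite set containing every place where the local decomposition is nontrivial, we obtain global finite extensions $K_1,\ldots,K_r$ of $K$ with $Z(K_i) \neq \emptyset$, embedding into some $M_j^{(v)}$ for each $v \in T$, and satisfying the linear-disjointness conditions \ref{ld1} and \ref{h2} of Proposition \ref{shadiv} against the Galois closure of $L/K$ and any further auxiliary fields required by \ref{h1} and \ref{h5}. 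For $v \notin T$, a direct residue analysis via Lemma \ref{lemloc} together with the fact that $x$ is integral at almost every place should force the class of $nx$ to vanish in the local quotient $\K(K_v) / \langle N_{L_v/K_v}(\K(L_v)),\, \sum_i N_{K_i \otimes_K K_v / K_v}(\K(K_i \otimes_K K_v))\rangle$, possibly after enlarging $T$ to absorb any residual obstruction.

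Third, Lemma \ref{abstractlemma} identifies the global quotient $\K(K)/\langle N_{L/K}(\K(L)),\, \sum_i N_{K_i/K}(\K(K_i))\rangle$ with $H^3(K, \check T \otimes \mathbb{Z}(2))$, where $\check T$ is dual to the module $\hat T$ of Proposition \ref{shadiv} attached to the étale $K$-algebra $L \times K_1 \times \cdots \times K_r$. By the previous step, the class of $nx$ lies in $\Sha^3(K, \check T \otimes \mathbb{Z}(2))$, which by the Poitou-Tate pairing \eqref{PT} is finite and Pontryagin-dual to $\overline{\Sha^2(K,\hat T)}$. Verifying the hypotheses of Proposition \ref{shadiv}---\ref{ld1} and \ref{h2} by construction; \ref{h3} and \ref{h4} via Lemma \ref{res1} applied to the unramified extensions $LK_\mathcal{I}/K_\mathcal{I}$, $LK_i/K_i$ and $LK_{\hat\imath}/K_{\hat\imath}$; \ref{h1} and \ref{h5} by enlarging the family to control the Galois closure of $L/K$---then bounds the exponent of $\overline{\Sha^2(K,\hat T)}$, and hence of $\Sha^3(K, \check T \otimes \mathbb{Z}(2))$, by a divisor of $n$. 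It follows that $n \cdot [nx] = 0$, i.e.\ $n^2 x \in \langle N_{L/K}(\K(L)),\, \sum_i N_{K_i/K}(\K(K_i))\rangle \subseteq \langle N_{L/K}(\K(L)),\, N_2(Z/K)\rangle$, as wanted.

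The main obstacle is the globalization step: one must simultaneously arrange the $K_i$'s so that every hypothesis of Proposition \ref{shadiv} holds---in particular the injectivity condition \ref{h1}, which requires delicate linear-disjointness against auxiliary extensions---while also controlling the local behaviour at places outside $T$ so that the Tate-Shafarevich membership holds at every $v \in C^{(1)}$, not merely at $v \in T$. The intricate interplay of Hilbertian approximation in Proposition \ref{locglob} with the cohomological vanishing machinery of Lemma \ref{res1} and Proposition \ref{shadiv} is what makes this argument technically demanding.
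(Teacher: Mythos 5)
Your high‐level architecture matches the paper's proof: reduce to $\ires(C)=1$ via Proposition~\ref{resid}, solve the problem locally with Proposition~\ref{proploc}, globalize the local extensions with Proposition~\ref{locglob}, translate into motivic cohomology with Lemma~\ref{abstractlemma}, control $\overline{\Sha^2(K,\hat T)}$ with Proposition~\ref{shadiv}, and conclude by Poitou–Tate duality. However, there is a genuine gap at the last-but-one step, precisely where the exponent $\chi_K(Z,E)$ has to appear. Proposition~\ref{shadiv} only yields that $\overline{\Sha^2(K,\hat T)}$ is $\left((m\vee m')\wedge [L:K]\right)$-torsion; to get the exponent to divide $\chi_K(Z,E)$ you must, via Lemma~\ref{res1}, bound the kernel of $\mathrm{Res}_{LK_{\mathcal I}/K_{\mathcal I}}$ by $\ires(C_{\mathcal I})\wedge [L:K]$ and then show this g.c.d.\ divides $\chi_K(Z,E)$. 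Since $[L:K]$ has no a priori relation to $\chi_K(Z,E)$, this is exactly where the hard work lies: for each prime $\ell$ with $v_\ell([L:K])>v_\ell(\chi_K(Z,E))$, one must \emph{force} $v_\ell(\ires(C_{\mathcal I}))\le v_\ell(\chi_K(Z,E))$ by constructing, as in Lemma~\ref{lema feo}, an auxiliary place $w_\ell$ where the residual degree is prime to $\ell$ and where $Z$ acquires a point over an extension with small $\ell$-part of its residual degree — and then threading condition (iii) through the globalization step so that every $K_i^{(v)}$ embeds into the auxiliary local extension $M^{(w_\ell)}$. Your proposal simply asserts that the hypotheses of Proposition~\ref{shadiv} ``bound the exponent by a divisor of $n$'' without providing this mechanism, and without it you only get torsion by $\ires(C_{\mathcal I})\wedge[L:K]$, which can be strictly larger than $\chi_K(Z,E)$.

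A related, smaller omission: to ensure $(m\vee m')\wedge[L:K]=m\wedge[L:K]$, the paper forces $m'\wedge[L:K]=1$ by imposing linear disjointness of $K_{\mathcal I}$ from the field $L_n$ (composite of cyclic, everywhere-locally-trivial extensions of $L$ of degree dividing $[L:K]$). Your mention of ``any further auxiliary fields required by \ref{h1} and \ref{h5}'' gestures vaguely in this direction but does not identify $L_n$ or explain why linear disjointness from it kills the contribution of $m'$. Finally, you skip the preliminary reduction to $Z$ geometrically integral via the algebraic closure $K'$ of $K$ in $K(Z)$ and a restriction–corestriction argument; this is minor but needed before Proposition~\ref{locglob} can be applied.
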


\begin{proof}
Take $x\in \K(K)$. We want to prove that: $$\chi_K(Z,E)^2\cdot x\in \langle N_{L/K}(\K(L)), N_2(Z/K) \rangle.$$ 
First observe that, if $K'$ stands for the algebraic closure of $K$ in the function field of $Z$, then $Z$ has a structure of a smooth proper $K'$-variety and that $\chi_{K'}(Z,E)=[K':K]^{-1}\chi_K(Z,E)$. Therefore, by restriction-corestriction, we can assume that $K=K'$, and hence that $Z$ is geometrically integral. Moreover, by Proposition \ref{resid}, we may and do assume that $C$ has residual index $1$. \\

Let now $S$ be the (finite) set of places $v\in C^{(1)}$ such that $\partial_v x \neq 0$. Given a prime number $\ell$, since the curve $C$ has residual index $1$ and the field $k$ is large, we can find some point $w_\ell\in C^{(1)}\setminus S$ such that the residual degree $[k(w_\ell):k]_{\res}$ of $k(w_\ell)/k$ is prime to $\ell$. Moreover, by Proposition \ref{proploc}, we have 
\begin{equation}\label{inclusionatp}
    \chi_K(Z,E)\cdot \K(K_{w_\ell})\subseteq \left\langle N_{L_{w_\ell}/K_{w_\ell}}(\K(L_{w_\ell})), N_2(Z_{w_\ell}/K_{w_\ell}) \right\rangle.
\end{equation}
Before moving further, we need to prove the following lemma:

\begin{lemma}\label{lema feo}
Let $n=[l:k]$ with $l/k$ as in Theorem \ref{thint}. If $v_\ell(n)>v_\ell(\chi_K(Z,E))$, then there exists a finite extension $M^{(w_\ell)}$ of $K_{w_\ell}$ with residue field $m^{(w_\ell)}$ such that $Z(M^{(w_\ell)}) \neq \emptyset$ and $v_\ell([m^{(w_\ell)}:k(w_\ell)]_{\res})\leq v_\ell(\chi_K(Z,E))$.
\end{lemma}

\begin{proof}
By contradiction, assume that, for each finite extension $M$ of $K_{w_\ell}$ with residue field $m$ such that $Z(M)\neq\emptyset$, we have $v_\ell([m:k(w_\ell)]_{\res})>v_\ell(\chi_K(Z,E))$. By applying the residue map to \eqref{inclusionatp} and by denoting $l(w_\ell)$ the residue field of $L_{w_\ell}$, we get:
$$(K_{w_\ell}^\times)^{\chi_K(Z,E)}\subseteq \left\langle N_{l(w_\ell)/k(w_\ell)}(l(w_\ell)^\times);\, N_{m/k(w_\ell)}(m^\times) \mid v_\ell([m:k(w_\ell)]_{\res})>v_\ell(\chi_K(Z,E)) \right\rangle.$$
By applying the valuation $w_\ell$, we deduce that:
\begin{equation}\label{eqn lema feo}
    \chi_K(Z,E)\in \left\langle [l(w_\ell):k(w_\ell)]_{\res} ;\, [m:k(w_\ell)]_{\res} \mid v_\ell([m:k(w_\ell)]_{\res})>v_\ell(\chi_K(Z,E)) \right\rangle\subseteq\Z.
\end{equation}
Now, since $l/k$ is unramified, we have $[l:k]_{\res}=n$. Moreover, since $[k(w_\ell):k]_{\res} \wedge \ell =1 $, our hypothesis on $v_\ell(n)$ implies that:
\[v_\ell([l(w_\ell):k(w_\ell)]_{\res})\geq v_\ell([l:k]_{\res})>v_\ell(\chi_K(Z,E)).\]
Thus, every integer in:
$$\left\langle [l(w_\ell):k(w_\ell)]_{\res} ;\, [m:k(w_\ell)]_{\res} \mid v_\ell([m:k(w_\ell)]_{\res})>v_\ell(\chi_K(Z,E)) \right\rangle,$$
is divisible by $\ell^{v_\ell(\chi_K(Z,E))+1}$, which contradicts \eqref{eqn lema feo}.
\end{proof}

We keep the notation $n:=[l:k]$ and resume the proof of Theorem \ref{thint}. For $v\in C^{(1)}\setminus S$, we have 
\begin{equation}\label{apploc0}
    x\in N_{L_v/K_v}(\K(L_v))
\end{equation} 
by Lemma \ref{lemloc}. For $v\in S$, Proposition \ref{proploc} shows that we can find $M^{(v)}_1,\ldots,M^{(v)}_{r_v}$ finite extensions of $K_v$ such that $Z(M^{(v)}_i) \neq \emptyset$ for all $i$ and:
\begin{equation}\label{apploc}
\chi_K(Z,E)\cdot x\in \langle N_{L_v/K_v}(\K(L_v)); N_{M^{(v)}_i/K_v}(\K(M^{(v)}_i)), 1\leq i \leq r_v \rangle.
\end{equation}
By applying Proposition \ref{locglob} inductively, we can find, for each $v\in S$ and $1\leq i \leq r_v$, a finite extension $K_i^{(v)}$ of $K$ satisfying the following properties:
\begin{enumerate}
\item[(i)] $Z(K_i^{(v)})\neq \emptyset$;
    \item[(ii)]  there exists a $K$-embedding $K_i^{(v)}\hookrightarrow M_i^{(v)}$;
    \item[(iii)] there also exists a $K$-embedding $K_i^{(v)}\hookrightarrow M^{(w_\ell)}$, where $M^{(w_\ell)}$ is given by Lemma \ref{lema feo}, for each prime $\ell$ such that $v_\ell(n)>v_\ell(\chi_K(Z,E))$;
    \item[(iv)] for each pair $(v_0,i_0)$, the field $K_{i_0}^{(v_0)}$ is linearly disjoint to the composite field
    \[L_n\cdot \prod_{(v,i)\neq (v_0,i_0)} K_i^{(v)},\] over $K$, where $L_n$ stands for the composite of all cyclic extensions of $L$ that are locally trivial everywhere and whose degrees divide $n$. Note that $L_n$ is a finite extension of $L$ since $\Sha^1(L,\mathbb{Z}/n\mathbb{Z})$ is finite.
\end{enumerate}
Consider the Galois module $\hat{T}$ defined by the following exact sequence:
$$0 \rightarrow  \mathbb{Z}\rightarrow \mathbb{Z}[E/K] \rightarrow  \hat{T} \rightarrow 0,$$
where $E:=L\times \prod_{v,i} K_i^{(v)}$. To conclude, we introduce the composite field $K_{\mathcal{I}}=\prod_{v,i} K_i^{(v)}$ and we check the assumptions \ref{ld1}, \ref{h2}, \ref{h1}, \ref{h3}, \ref{h4} and \ref{h5} of Proposition \ref{shadiv} with $m=\chi_K(Z,E)$ and $$m'=\left |\ker(\mathrm{Res}_{LK_{\mathcal{I}}/L}:\Sha^2(L,\mathbb{Z}) \rightarrow \Sha^2(LK_{\mathcal{I}},\mathbb{Z}))\right|.$$
\begin{itemize}
\item[\ref{ld1}] The extension $L/K$ is obviously Galois. The fields $L$ and $K_{\mathcal{I}}$ are linearly disjoint over $K$ by (iv).

\item[\ref{h2}] This immediately follows from (iv).

\item[\ref{h1}] By proceeding exactly in the same way as in Lemma 4 of \cite{DW}, since we already have \ref{ld1}, one gets the injectivity of the restriction map:
$$H^2(K,\hat{T}) \rightarrow H^2(L,\hat{T})\oplus H^2(K_{\mathcal{I}},\hat{T}),$$
 and hence of:
$$\Sha^2(K,\hat{T}) \rightarrow \Sha^2(L,\hat{T})\oplus \Sha^2(K_{\mathcal{I}},\hat{T}).$$

\item[\ref{h3}] Let $C_{\mathcal{I}}$ be the smooth projective $k$-curve with fraction field $K_{\mathcal{I}}$. On the one hand, by (iii), given a prime $\ell$ such that $v_\ell(n)>v_\ell(\chi_K(Z,E))$, the field $K_{\mathcal{I}}$ can be seen as a subfield of $M^{(w_\ell)}$ and the inequality $v_\ell([m^{(w_\ell)}:k]_{\res})\leq v_\ell(\chi_K(Z,E))$ holds by Lemma \ref{lema feo}. We deduce that $v_\ell(\ires(C_{\mathcal{I}})) \leq v_\ell(\chi_K(Z,E))$ for such $\ell$. On the other hand, for any other prime number $\ell$, we have $v_\ell(n)\leq v_\ell(\chi_K(Z,E))$. We deduce that $\ires(C_{\mathcal{I}})\wedge n$ divides $m=\chi_K(Z,E)$, and hence \ref{h3} follows from Lemma \ref{res1}.

\item[\ref{h4}] This immediately follows from Lemma \ref{res1}.

\item[\ref{h5}] Since $L/K$ is Galois, \ref{h5} immediately follows from the choice of $m'$.
\end{itemize}
By Proposition \ref{shadiv}, we deduce that the group $\overline{\Sha^2(K,\hat{T})}$ is $\left((m\vee m')\wedge n\right)$-torsion. But by (iv), the fields $K_{\mathcal I}$ and $L_n$ are linearly disjoint over $K$, and hence, by the definition of $m'$, we have $m' \wedge n=1$, so that $(m\vee m')\wedge n=m \wedge n$. The group $\overline{\Sha^2(K,\hat{T})}$ is therefore $m$-torsion. If we set $\check{T}:=\mathrm{Hom}(\hat{T},\mathbb{Z})$ and $T:=\check{T}\otimes \mathbb{Z}(2)$, that is also the case of $\Sha^3(K,T)$ according to Poitou-Tate duality.

Now, by Lemma \ref{abstractlemma}, we may interpret $x$ as an element of $H^3(K,T)$. Equations \eqref{apploc0} and \eqref{apploc} together with assertion (ii) imply that $mx \in \Sha^3(K,T)$, which is $m$-torsion. Thus $m^2x=0 \in \Sha^3(K,T)$. This amounts to
\begin{align*}
    m^2x &\in \left\langle N_{L/K}(\K(L));\, N_{K_i^{(v)}/K}(\K(K_i^{(v)})),\, v\in S, \, 1\leq i \leq r_v \right\rangle\\
&\subseteq \left\langle N_{L/K}(\K(L)), N_2(Z/K) \right\rangle,
\end{align*}
the last inclusion being a consequence of (i).
\end{proof}

\subsubsection{Step 6: Proof of Theorem \ref{corint}}\label{subsubsec conclusion}

In this final step, we remove the smoothness assumption from the previous step and prove Theorem \ref{corint} for all proper varieties. For that purpose, we use the following variation of the dévissage technique given by Proposition 2.1 of \cite{Wit}:

\begin{proposition}[\cite{Wit}]\label{dev}
Let $K$ be a field and $r$ a positive integer. Let (P) be a property of proper $K$-varieties. Suppose we are given, for each proper $K$-variety $X$, an integer $m_X$. Make the following assumptions:
\begin{itemize}
    \item[(1)] For every morphism of proper $K$-schemes $Y \rightarrow X$, the integer $m_X$ divides $m_Y$.
    \item[(2)] For every proper $K$-scheme $X$ satisfying (P), the integer $m_X$ divides $\chi_K(X,\mathcal{O}_X)^r$.
     \item[(3)] For every proper and integral $K$-scheme $X$, there exists a proper $K$-scheme $Y$ satisfying (P) and a $K$-morphism $f:Y \rightarrow X$ with generic fiber $Y_{\eta}$ such that $m_X$ and $\chi_{K(X)}(Y_{\eta},\mathcal{O}_{Y_{\eta}})$ are coprime.
\end{itemize}
Then for every proper $K$-scheme $X$ and every coherent sheaf $E$ on $X$, the integer $m_X$ divides $\chi_K(X,E)^r$.
\end{proposition}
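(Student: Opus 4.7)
The plan is to follow a Noetherian induction in the spirit of Wittenberg's Proposition 2.1 of \cite{Wit}, preceded by a small trick to absorb the exponent $r$. For a positive integer $n$, set $n' := \prod_p p^{\lceil v_p(n)/r \rceil}$; the divisibility $n \mid a^r$ is then equivalent to $n' \mid a$ for any integer $a$. Replacing each $m_X$ by $m'_X$, hypotheses (1), (2), (3) translate to the same statements with $r=1$: (1) holds because ceilings preserve $v_p$-inequalities, (2) becomes $m'_X \mid \chi_K(X, \mathcal{O}_X)$, and (3) is unchanged because $m_X$ and $m'_X$ share the same prime divisors. It therefore suffices to prove $m'_X \mid \chi_K(X, E)$ for every proper $K$-scheme $X$ and every coherent sheaf $E$ on $X$.

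I would fix $X$ and argue by Noetherian induction on closed subschemes of $X$. Since the Grothendieck group $K_0(X)$ of coherent sheaves on $X$ is generated by classes $[\mathcal{O}_Z]$ with $Z$ a closed integral subscheme of $X$, and $\chi_K(X, \mathcal{O}_Z) = \chi_K(Z, \mathcal{O}_Z)$, it suffices to prove $m'_X \mid \chi_K(Z, \mathcal{O}_Z)$ for each such $Z$. When $Z \subsetneq X$, the inductive hypothesis applied to $Z$ gives $m'_Z \mid \chi_K(Z, \mathcal{O}_Z)$, and hypothesis (1) gives $m'_X \mid m'_Z$. The remaining case is $Z = X$, which forces $X$ to be integral; here one must show $m'_X \mid \chi_K(X, \mathcal{O}_X)$.

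In this crucial case, invoke (3) to obtain a morphism $f : Y \to X$ with $Y$ satisfying (P) and $\gcd(m'_X, \chi_{K(X)}(Y_\eta, \mathcal{O}_{Y_\eta})) = 1$. Hypotheses (1) and (2) yield $m'_X \mid \chi_K(Y, \mathcal{O}_Y)$, while the Leray spectral sequence gives
$$\chi_K(Y, \mathcal{O}_Y) = \sum_i (-1)^i \chi_K(X, R^i f_* \mathcal{O}_Y).$$
The generic rank of $R^i f_* \mathcal{O}_Y$ equals $h^i := \dim_{K(X)} H^i(Y_\eta, \mathcal{O}_{Y_\eta})$. Since the quotient of $K_0(X)$ by the subgroup generated by sheaves supported on proper closed subschemes of $X$ is freely generated by the image of $[\mathcal{O}_X]$ via the generic rank map, the class $[R^i f_* \mathcal{O}_Y] - h^i [\mathcal{O}_X]$ is a $\mathbb{Z}$-linear combination of classes $[\mathcal{O}_W]$ with $W \subsetneq X$ closed integral. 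By the inductive hypothesis and (1), each $\chi_K(W, \mathcal{O}_W)$ is divisible by $m'_X$, so
$$\chi_K(Y, \mathcal{O}_Y) \equiv \chi_{K(X)}(Y_\eta, \mathcal{O}_{Y_\eta}) \cdot \chi_K(X, \mathcal{O}_X) \pmod{m'_X}.$$
Combining this congruence with the divisibility $m'_X \mid \chi_K(Y, \mathcal{O}_Y)$ and the coprimality from (3), we cancel the generic rank factor to obtain $m'_X \mid \chi_K(X, \mathcal{O}_X)$, which completes the induction.

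The main subtlety lies in keeping the Noetherian induction straight through the case split, together with the structural input that $K_0(X)/N \cong \mathbb{Z}\cdot[\mathcal{O}_X]$ via the generic rank (where $N$ denotes the subgroup generated by sheaves on proper closed subschemes of $X$). Everything else is routine bookkeeping with Euler characteristics and the Leray spectral sequence.
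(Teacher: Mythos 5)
Your proof is correct and follows essentially the same route as the paper: both replace $m_X$ by $\prod_p p^{\lceil v_p(m_X)/r\rceil}$ (called $n_X$ in the paper) and observe that this reduces the statement to the $r=1$ case, i.e., Proposition~2.1 of \cite{Wit}, after checking that the three hypotheses transfer. The paper simply cites Wittenberg's proposition at that point, whereas you also sketch its proof via $K_0$-d\'evissage, Leray, and the generic-rank map; your sketch is sound, though the induction is more cleanly phrased as a Noetherian (or dimension) induction over all proper $K$-schemes rather than over closed subschemes of a single fixed $X$, since hypothesis (3) produces an auxiliary $Y$ that is not a subscheme of $X$ and the case $Z\subsetneq X$ invokes $m'_Z$, not $m'_X$.
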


\begin{proof}
One can prove this result by following almost word by word the proof of Proposition 2.1 of \cite{Wit}. Alternatively, for each proper $K$-scheme $X$, write the prime decomposition of $m_X$:
$$m_X=\prod_p p^{\alpha_p},$$
and consider the integer
$$n_X:=\prod_p p^{\lceil \frac{\alpha_p}{r}\rceil}.$$
One can then easily check that the correspondence $X \mapsto n_X$ satisfies assumptions (1), (2) and (3) of Proposition 2.1 of \cite{Wit}. We deduce that $n_X|\chi_K(X,E)$, and hence that $m_X|\chi_K(X,E)^r$, for every proper $K$-scheme $X$ and every coherent sheaf $E$ on $X$. 
\end{proof}

\begin{proof}[Proof of Theorem \ref{corint}]
Given a proper $K
$-variety $Z$, we denote by $m_Z$ the exponent of the quotient $$\K(K)/\langle N_{L/K}(\K(L)), N_2(Z/K) \rangle.$$ We say that $Z$ has property (P) if it is smooth and integral. We have to check the three conditions (1), (2) and (3) of Proposition \ref{dev}. Condition (1) is straightforward. Condition (2) follows from Theorem \ref{thint}. Condition (3) follows from Hironaka's Theorem on resolution of singularities (Section 3.3 of \cite{Kollar}).
\end{proof}

\subsection{Proof of Main Theorem \ref{MTA}}

We can now deduce Main Theorem \ref{MTA} from Theorem \ref{corint}.

\begin{proof}[Proof of Main Theorem \ref{MTA}]
Fix two integers $n, d \geq 1$ such that $d^2 \leq n$ and a hypersurface $Z$ in $\mathbb{P}^n_{K}$ of degree $d$. By Lang's and Tsen's Theorems (Theorem 2a of \cite{Nagata} and Theorem 12 of \cite{Lang}), the field $k^{\mathrm{nr}}(C)$ is $C_2$. Since $d^2\leq n$, we deduce that there exists a finite unramified extension $l$ of $k$ such that $Z(lK) \neq \emptyset$. By Theorem \ref{corint}, the quotient:
$$\K(K)/\langle N_{lK/K}(\K(lK)), N_2(Z/K) \rangle=\K(K)/  N_2(Z/K) $$
is $ \chi_K(Z,\mathcal{O}_Z)^{2}$-torsion. But since $d \leq n$, Theorem III.5.1 of \cite{Hart} implies that $\chi_K(\mathbb{P}^n_{K},\mathcal{O}_{\mathbb{P}^n_{K}}(-d))=0$, and hence the exact sequence
$$0\rightarrow \mathcal{O}_{\mathbb{P}^n_{K}}(-d)\rightarrow  \mathcal{O}_{\mathbb{P}^n_{K}} \rightarrow i_* \mathcal{O}_Z \rightarrow 0,$$
where $i: Z \rightarrow \mathbb{P}^n_{K}$ stands for the closed immersion, gives:
$$\chi_K(Z,\mathcal{O}_Z)=\chi_K(\mathbb{P}^n_{K},\mathcal{O}_{\mathbb{P}^n_{K}})-\chi_K(\mathbb{P}^n_{K},\mathcal{O}_{\mathbb{P}^n_{K}}(-d))= 1.$$ Hence $\K(K)= N_2(Z/K)$. 
\end{proof}

\section{On the $C_1^2$ property for $p$-adic function fields}\label{sec4}

The goal of this section is to prove Main Theorem \ref{MTB}. Contrary to Main Theorem \ref{MTA}, for which we needed to deal with unramified extensions of $k$, here we will have to deal with ramified extensions of $k$. For that purpose, the key statement is given by the following theorem:

\begin{theorem}\label{extell}
Assume that $C$ has a rational point, let $\ell$ be a prime number, and fix a finite Galois totally ramified extension $l/k$ of degree $\ell$. Let $\mathcal{E}^0_{l/k}$ be the set of all finite ramified subextensions of $l^{\mathrm{nr}}/k$ and let $\mathcal{E}_{l/k}$ be the set of finite extensions $K'$ of $K$ of the form $K'=k'K$ for some $k'\in \mathcal{E}^0_{l/k}$. Then:
$$\K(K) = \langle N_{K'/K}(\K(K')) \, |\, K'\in \mathcal{E}_{l/k} \rangle.$$
\end{theorem}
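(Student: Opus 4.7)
The plan is to follow the four-step outline used for Main Theorem~\ref{MTA}: (i) analyze the problem locally at every $v\in C^{(1)}$; (ii) globalize the local extensions; (iii) carry out a Tate-Shafarevich computation for a suitable finite family $L_1,\ldots,L_r\in\mathcal{E}_{l/k}$; and (iv) conclude via the Poitou-Tate duality~\eqref{PT} combined with Lemma~\ref{abstractlemma}. Two substantial simplifications apply. First, $C(k)\neq\emptyset$ yields $\ires(C)=\iram(C)=1$. Second, since $l/k$ is totally ramified cyclic of prime degree $\ell$ with $l^{\mathrm{nr}}=l\cdot k^{\mathrm{nr}}$, any finite ramified subextension of $l^{\mathrm{nr}}/k$ must contain $l$, so $\mathcal{E}^0_{l/k}=\{lk_n:n\geq 1\}$, with $k_n/k$ the unramified extension of degree $n$; consequently every extension in $\mathcal{E}_{l/k}$ is a constant-field extension of $K$.

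\textbf{Local step and globalization.} Fix $v\in C^{(1)}$. The crucial observation is that, although $l/k$ is $p$-adically ramified, the valuation $v$ is trivial on $k$, so for every $k'\in\mathcal{E}^0_{l/k}$ the $K_v$-algebra $k'\otimes_k K_v$ decomposes as a product of \emph{unramified} extensions of $K_v$. Using the residue exact sequence $0\to U_2(K_v)\to\K(K_v)\to k(v)^\times\to 0$ together with the commutative diagrams~\eqref{eqn diagr norm K-th}, the unit part $U_2(K_v)$ is absorbed exactly as in Lemma~\ref{lemloc}. In contrast to Main Theorem~\ref{MTA}, however, the problem is \emph{not} solved strictly locally: by local class field theory on $k(v)$, the combined residue of the norm subgroup from all of $\mathcal{E}_{l/k}$ is precisely $N_{l\cdot k(v)/k(v)}((l\cdot k(v))^\times)$, so the local quotient is a nontrivial cyclic group of order dividing $\ell$. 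Globalization is then trivial, since extensions in $\mathcal{E}_{l/k}$ are already global; one simply picks a finite subfamily $L_i=lk_{n_i}K$, the unramified degrees $n_i$ being at our disposal.

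\textbf{Tate-Shafarevich computation.} For a given $x\in\K(K)$, Lemma~\ref{abstractlemma} identifies the obstruction $\K(K)/\langle N_{L_i/K}(\K(L_i))\rangle$ with $H^3(K,\check T\otimes\Z(2))$, so by the Poitou-Tate duality~\eqref{PT} it suffices to show that $\Sha^2(K,\hat T)$ is divisible, where $\hat T$ is defined by $0\to\Z\to\Z[E/K]\to\hat T\to 0$ with $E=\prod L_i$. The direct application of Proposition~\ref{shadiv} to $\{L_i\}$ fails because every $L_i$ contains $lK$, so condition~\ref{h2} cannot be met. The fix is to adjoin an auxiliary constant-field extension $K'=k_{n_0}K$ with $n_0$ coprime to the $n_i$'s and to the degrees of every everywhere-locally-trivial cyclic extension of $K$, and to use $K'$ as the distinguished $L$ in Proposition~\ref{shadiv}. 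This coprimality secures~\ref{ld1} and~\ref{h2}, while $\ires(C)=\iram(C)=1$ combined with Lemma~\ref{res1} makes the restriction maps on $\Sha^2(-,\Z)$ isomorphisms, yielding $m=m'=1$ in the conclusion of Proposition~\ref{shadiv} and therefore the divisibility of $\Sha^2(K,\hat T)$. One thus obtains $\K(K)=\langle N_{K'/K}(\K(K')),N_{L_i/K}(\K(L_i))\rangle$, and the remaining task is to reabsorb $N_{K'/K}(\K(K'))$ into the $\mathcal{E}_{l/k}$-norm subgroup, which is done by exploiting $lK'=lk_{n_0}K\in\mathcal{E}_{l/k}$ together with a separate reciprocity argument over the base $K'$.

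\textbf{Main obstacle.} The hard part is twofold. On one hand, the rigid structure $L_i\supset lK$ shared by all extensions in $\mathcal{E}_{l/k}$ blocks the naive application of Proposition~\ref{shadiv} (condition~\ref{h2} fails) and forces the introduction of an auxiliary unramified extension whose norms must then be reabsorbed; on the other hand, the local quotients at every $v$ are nontrivial cyclic groups, so the entire argument genuinely depends on the divisibility of $\Sha^2(K,\hat T)$, i.e., on a refined form of global reciprocity. It is precisely here that the hypothesis $C(k)\neq\emptyset$---which gives the optimal $\ires(C)=\iram(C)=1$---is crucial, since it is what allows Lemma~\ref{res1} to deliver $m=m'=1$ and thereby close the argument.
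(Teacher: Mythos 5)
Your proposal rests on a structural misreading of the set $\mathcal{E}^0_{l/k}$, and that error propagates through the whole argument. You claim that because $l/k$ is totally ramified of prime degree and $l^{\mathrm{nr}} = l\cdot k^{\mathrm{nr}}$, every finite ramified subextension of $l^{\mathrm{nr}}/k$ must contain $l$, so that $\mathcal{E}^0_{l/k}=\{lk_n : n\ge 1\}$. This is false. Writing $G = \mathrm{Gal}(l^{\mathrm{nr}}/k) \cong \mathbb{Z}/\ell \times \hat{\mathbb{Z}}$ with inertia $I = \mathbb{Z}/\ell\times\{0\}$, a finite subextension $k'$ is ramified iff the corresponding closed subgroup $H$ does \emph{not} contain $I$, whereas $k'\supset l$ iff $H\subset \{0\}\times\hat{\mathbb{Z}}$. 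These conditions are not equivalent: for example the closed subgroup generated by $(1,1)$ has index $\ell$ and does not contain $I$, yet is not contained in $\{0\}\times\hat{\mathbb{Z}}$; it cuts out a totally ramified degree-$\ell$ extension $k'$ with $k'\ne l$ and $k'\not\supset l$. This abundance of mutually non-containing ramified extensions of degree $\ell$ is exactly what the paper exploits. Indeed the paper's Lemma \ref{ramunram} constructs precisely such a $k'$, and Proposition \ref{shadiv2} requires two fields $K_1,K_2\in\mathcal{E}_{l/k}$ that are linearly disjoint over $K$ -- which would be impossible if every element of $\mathcal{E}_{l/k}$ contained $lK$.

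The misreading then falsifies your local analysis. The paper's Proposition \ref{localextell} proves that at every $v$ the local problem is solved \emph{completely}, i.e.\ $\K(K_v)$ equals the full norm subgroup. Your claim that the local quotient is a nontrivial cyclic group of order dividing $\ell$ is wrong, and it is wrong precisely because you are only taking norms from the extensions $lk_nK$, whereas the paper also uses the ``rotated'' ramified extensions $k'\ne l$ furnished by Lemma \ref{ramunram} (and, in case the extension is still ramified locally, Lubin--Tate theory via \cite{Yoshida}). Finally, the Tate--Shafarevich step of the paper is not an application of Proposition \ref{shadiv} with an auxiliary unramified $K'$ to be ``reabsorbed'' -- instead the paper proves a separate, simpler Proposition \ref{shadiv2} showing directly that $\Sha^2(K,\hat{T})$ is divisible when $C(k)\neq\emptyset$ and two of the $K_i$'s are linearly disjoint over $K$; the rational point hypothesis is used both there (to make restrictions on $\Sha^2(-,\Z)$ isomorphisms via Lemma \ref{res1}) and to identify $\mathrm{Gal}(K_{\mathcal{I}}/K)$ with a decomposition group. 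Your ``separate reciprocity argument over $K'$'' to reabsorb $N_{K'/K}$ is left unexplained and would in any case be unnecessary once the correct family $\mathcal{E}_{l/k}$ is in play.
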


Note that, given any two extensions $k'$ and $k''$ in $\mathcal{E}^0_{l/k}$ with $k'\subset k''$, the extension $k''/k'$ is unramified. This observation will be often used in the sequel.

\begin{remark}
We think that the assumption that $C$ has a rational point in Theorem \ref{extell} cannot be removed. To check that, we invite the reader to assume that $\iram(C)=\ell$. Then, given an integer $n \geq 1$, consider the set $\mathcal{E}_n^0$ whose elements are extensions of $k$ in $\mathcal{E}_{l/k}^0$ that are contained in the composite $l_n:=lk_n$, where $k_n$ is the degree $\ell^n$ unramified extension of $k$. Define the set $\mathcal{E}_n$ of finite extensions $K'$ of $K$ contained in $L_n:=l_nK$ that are of the form $K'=k'K$ for some $k'\in \mathcal{E}_n^0$ and consider the Galois module $\hat{T}_n$ defined by the exact sequence:
$$0 \rightarrow \mathbb{Z} \rightarrow \bigoplus_{K' \in \mathcal{E}_n} \mathbb{Z}[K'/K] \rightarrow \hat{T}_n \rightarrow 0.$$
By following the proof of Proposition \ref{shadiv2}, one can check that, if $K_1$ and $K_2$ are two distinct degree $\ell$ extensions of $K$ in $\mathcal{E}_n$, then the Tate-Shafarevich group $\Sha^2(K,\hat{T}_n)$ is the direct sum of the kernel of the map:
$$(\mathrm{Res}_{K_1/K},\mathrm{Res}_{K_2/K}):\Sha^2(K,\hat{T}_n) \rightarrow \Sha^2(K_1,\hat{T}_n)\oplus \Sha^2(K_2,\hat{T}_n)$$
 and of a divisible group, given by the kernel of the map:
$$\mathrm{Res}_{K_1K_2/K_1}-\mathrm{Res}_{K_1K_2/K_2}: \Sha^2(K_1,\hat{T}_n)\oplus \Sha^2(K_2,\hat{T}_n)\rightarrow \Sha^2(K_1K_2,\hat{T}_n).$$
In particular:
$$\overline{\Sha^2(K,\hat{T}_n)} \cong \ker \left(\Sha^2(K,\hat{T}_n) \rightarrow \Sha^2(K_1,\hat{T}_n)\oplus \Sha^2(K_2,\hat{T}_n)\right).$$
The computation of this kernel is a relatively simple (but a bit technical) exercise in the cohomology of finite groups, since it is contained in the group:
$$\ker \left(H^2(K,\hat{T}_n) \rightarrow H^2(L_n,\hat{T}_n)\right) \cong H^2(\mathrm{Gal}(L_n/K),\hat{T}) \cong H^2(\mathbb{Z}/\ell\mathbb{Z}\times \mathbb{Z}/\ell^n\mathbb{Z},\hat{T}).$$
In that way, one checks that $\overline{\Sha^2(K,\hat{T}_n)}$ is an $\mathbb{F}_\ell$-vector space of dimension at least $n\ell-n-1$. Moreover, the computation being very explicit, one can even check that the morphism $\overline{\Sha^2(K,\hat{T}_{n+1})}\rightarrow \overline{\Sha^2(K,\hat{T}_n)} $ induced by the natural projection $\hat{T}_{n+1}\rightarrow\hat{T}_{n}$ is always surjective. But then, by dualizing thanks to Poitou-Tate duality, this shows that the groups:
\begin{multline*}
    Q_n:=\ker ( \K(K)/\langle N_{K'/K}(\K(K')) \, |\, K'\in \mathcal{E}_{n} \rangle   \\ \rightarrow \prod_{v \in C^{(1)}} \K(K_v)/\langle N_{K'\otimes K_v/K_v}(\K(K'\otimes K_v)) \, |\, K'\in \mathcal{E}_{n} \rangle )
\end{multline*}
are all non-trivial and that the natural maps $Q_n \rightarrow Q_{n+1}$ are all injective. We deduce that the non-trivial elements of $Q_1$ provide non-trivial elements in the quotient:
$$\K(K)/\langle N_{K'/K}(\K(K')) \, |\, K'\in \bigcup_n \mathcal{E}_{n} \rangle = \K(K)/\langle N_{K'/K}(\K(K')) \, |\, K'\in \mathcal{E}_{l/k} \rangle.$$
\end{remark}

\subsection{Proof of Theorem \ref{extell}}

\subsubsection{Step 1: Solving the local problem}

The first step to prove Theorem \ref{extell} consists in settling an analogous statement over the completions of $K$. We start with the following lemma:

\begin{lemma}\label{ramunram}
Let $\ell$ be a prime number and let $l/k$ be a finite Galois totally ramified extension of degree $\ell$. Let $m/k$ be a totally ramified extension such that $ml/m$ is unramified. Then there exists $k' \in \mathcal{E}^0_{l/k}$ such that $k' \subset m$.
\end{lemma}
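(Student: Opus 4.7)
My plan is to separate the case $l \subset m$, where $k' := l$ works trivially (it is a finite ramified subextension of $l^{\mathrm{nr}}/k$ contained in $m$), from the main case $l \not\subset m$. In that latter case, since $[l:k] = \ell$ is prime, $[ml:m] = \ell$, and by hypothesis $ml/m$ is unramified of degree exactly $\ell$. All subsequent work concerns this main case.

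The first key step will be to identify $ml$ concretely as $ml = m k_\ell$, where $k_\ell$ denotes the unramified degree-$\ell$ extension of $k$. This comes from a residue-field analysis: since $m/k$ is totally ramified, $m$ and $k$ share the same residue field $\kappa$, so the unramified extension $ml/m$ of degree $\ell$ forces the residue field of $ml$ to be $\kappa_\ell$, the unique degree-$\ell$ extension of $\kappa$. Hence $k_\ell \subset ml$, and since $m \cap k_\ell \subset m \cap k^{\mathrm{nr}} = k$, a degree count gives $[m k_\ell : m] = \ell = [ml:m]$, so $ml = m k_\ell$.

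Next I will build the ambient Galois extension $L := l k_\ell$. Because $l/k$ is totally ramified of degree $\ell$ while $k_\ell/k$ is unramified of degree $\ell$, $l$ and $k_\ell$ are linearly disjoint over $k$, so $L/k$ is Galois with group $(\Z/\ell)^2$ and ramification index exactly $\ell$. I then propose to take $k' := m \cap L$. As a subextension of the totally ramified $m/k$, $k'/k$ is itself totally ramified, and its degree must divide the ramification index of $L/k$, which is $\ell$. Hence $[k':k] \in \{1,\ell\}$, and since $k' \subset L \subset l k^{\mathrm{nr}} = l^{\mathrm{nr}}$, the case $[k':k] = \ell$ immediately places $k'$ in $\mathcal{E}^0_{l/k}$ and inside $m$, finishing the proof.

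The only nontrivial step is therefore to exclude $[k':k] = 1$. I will argue by contradiction: if $m \cap L = k$, then the Galois extension $L/k$ and $m/k$ are linearly disjoint, giving $[mL:m] = [L:k] = \ell^2$. But the identification from Step 1 forces
\[ mL = m\cdot l\cdot k_\ell = (ml)\cdot k_\ell = m k_\ell, \]
whose degree over $m$ is only $\ell$. This contradiction is the heart of the argument, and it is the identity $ml = m k_\ell$ that makes it possible; everything else is bookkeeping on ramification indices and linear disjointness.
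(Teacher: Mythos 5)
Your proof is correct and follows essentially the same route as the paper's: both introduce the Galois extension $L = l\cdot k_\ell$ (the paper calls it $l_\ell$), observe that $k_\ell\subset ml$, and conclude by a degree argument that $m\cap L$ is a nontrivial totally ramified subextension of $l^{\mathrm{nr}}/k$ lying inside $m$. The only cosmetic difference is that the paper first passes to a finite subextension $m'\subset m$ before making the degree count, whereas you work directly with $m$ via linear disjointness of the finite Galois extension $L/k$, which handles the possibly infinite $m$ a little more cleanly.
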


\begin{proof}
If $ml/m$ is trivial, then $m$ contains $l$ and we are done. Therefore we may and do assume that $ml/m$ has degree $\ell$. Denote by $k_\ell$ the unramified extension of $k$ with degree $\ell$ and set $l_\ell := l \cdot k_\ell$. The extension $l_\ell/k$ is Galois with Galois group $(\mathbb{Z}/\ell\mathbb{Z})^2$, and since $ml$ is unramified of degree $\ell$ over $m$, it contains both $k_\ell$ and $l_\ell$, so that $l_\ell$ is contained in $m'l$ for some finite subsextension $m'$ of $m/k$. But:
$$[m':k]\cdot [l_\ell:k]=\ell^2 [m':k] > \ell [m':k]=[m'l:k]=[m'l_\ell:k].$$ 
Hence the intersection $k':=m' \cap l_\ell$ is a degree $\ell$ totally ramified extension of $k$, and $k' \in \mathcal{E}^0_{l/k}$.
\end{proof}

\begin{proposition}\label{localextell}
Let $\ell$ be a prime number and let $l/k$ be a finite Galois totally ramified extension of degree $\ell$. Fix $v \in C^{(1)}$. Then:
$$\K(K_v) = \langle N_{K'\otimes_{K} K_v/K_v}(\K(K'\otimes_{K} K_v)) \, |\, K'\in \mathcal{E}_{l/k} \rangle.$$
\end{proposition}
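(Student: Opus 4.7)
The plan is to compute the cokernel $\K(K_v)/N$ (where $N$ denotes the norm subgroup on the right-hand side) by exploiting the residue filtration on $\K(K_v)$ and reducing to a local class field theory assertion on the residue field $k(v)$. Writing $K_v \cong k(v)((t))$, one first checks that for every $K' = k'K$ with $k' \in \mathcal{E}^0_{l/k}$, the tensor product $K' \otimes_K K_v \cong (k' \otimes_k k(v))((t)) = \prod_w F_w((t))$, where the $F_w$ run through the field factors of $k' \otimes_k k(v)$; each component $K'_w = F_w((t))$ is then an unramified extension of $K_v$ with residue extension $F_w/k(v)$, so the norm diagrams \eqref{eqn diagr norm K-th} involve no ramification factor.

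I would then dispatch the $U_2(K_v)$ part first: the subgroup $U_2^1(K_v)$ is uniquely divisible since $K_v$ has residue field of characteristic zero, and the quotient $U_2(K_v)/U_2^1(K_v) \cong \K(k(v))$ is hit by $N_{F_w/k(v)}\colon \K(F_w) \to \K(k(v))$, which is surjective by the $C_0^2$-property of the $p$-adic field $k(v)$. Using the residue exact sequence and the compatibility of $\partial$ with norms, the proof reduces to showing that $k(v)^\times = \sum_{K',w} N_{F_w/k(v)}(F_w^\times)$. Since $k'/k$ is abelian (as a subextension of $l^{\mathrm{nr}}/k$, whose Galois group is $\hat{\mathbb{Z}} \times \mathbb{Z}/\ell$), each $F_w = k'k(v)$ is abelian over $k(v)$, and by Artin reciprocity this residual assertion becomes the Galois-theoretic identity $\bigcap_{k' \in \mathcal{E}^0_{l/k}} k'k(v) = k(v)$.

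To establish this last equality, set $k_0 := k(v) \cap l^{\mathrm{nr}}$. If $k_0/k$ is ramified, then $k_0 \in \mathcal{E}^0_{l/k}$ and $k_0 \subset k(v)$ immediately give $k_0 \cdot k(v) = k(v)$ (Lemma \ref{ramunram} provides an alternative way to produce a $k' \subset k(v)$ whenever $k(v)/k$ contains a totally ramified subextension $m$ with $ml/m$ unramified). The delicate case is when $k_0 \subset k^{\mathrm{nr}}$, say of degree $e$: identifying $\mathrm{Gal}(l^{\mathrm{nr}}/k_0) \cong e\hat{\mathbb{Z}} \times \mathbb{Z}/\ell$ inside $\mathrm{Gal}(l^{\mathrm{nr}}/k) \cong \hat{\mathbb{Z}} \times \mathbb{Z}/\ell$, the choice $k' = l$ only contributes the subgroup $e\hat{\mathbb{Z}} \times \{0\}$, missing the inertia direction. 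The key step is then to exhibit a \emph{twisted} ramified degree-$\ell$ extension $k' \in \mathcal{E}^0_{l/k}$ of $k_0$ inside $l^{\mathrm{nr}}$, corresponding to the kernel of a surjection $(a,b) \mapsto \alpha(a/e) + \beta b$ with $\alpha, \beta \in (\mathbb{Z}/\ell)^\times$; the associated graph subgroup combines with $e\hat{\mathbb{Z}} \times \{0\}$ to span all of $\mathrm{Gal}(l^{\mathrm{nr}}/k_0)$, yielding the desired intersection equality. The main obstacle is precisely this case where $\ell \mid [k_0:k]$, in which every ``untwisted'' ramified subextension of $l^{\mathrm{nr}}/k$ meets $\mathrm{Gal}(l^{\mathrm{nr}}/k_0)$ only in its unramified direction, and one must carefully construct a twisted $k'$ to recover the missing inertia contribution.
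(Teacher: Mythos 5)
Your proof is correct, and it takes a genuinely different route from the paper for the key case. Both arguments open the same way: use the residue filtration on $\K(K_v)$, observe that the $U_2$-part is hit by norms from any unramified extension $F_w((t))$ (divisibility of $U_2^1$ plus the $C_0^2$-property for the $p$-adic field $k(v)$), and thereby reduce to showing that $k(v)^\times$ is generated by the norm groups $N_{k'k(v)/k(v)}((k'k(v))^\times)$. The paper then splits into three cases according to whether $lk(v)/k(v)$ is trivial, unramified, or totally ramified; in the crucial third case it fixes a uniformizer $\pi$ of $k(v)$, works with the Lubin--Tate field $k(v)_\pi^{\mathrm{ram}}$, uses Lemma \ref{ramunram} to find $m\in\mathcal{E}^0_{l/k}$ inside $k(v)_\pi^{\mathrm{ram}}$, and then invokes Corollary 5.12 of \cite{Yoshida} to get $\pi$ as a norm, ranging over all uniformizers. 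You instead invoke the lattice (inclusion-reversing) correspondence from the existence theorem of local class field theory to convert the generation statement into the intersection identity $\bigcap_{k'} k'k(v) = k(v)$, and then finish by an explicit computation of open subgroups of $\mathrm{Gal}(l^{\mathrm{nr}}/k)\cong\hat{\mathbb{Z}}\times\mathbb{Z}/\ell$. Your two-case split (on whether $k_0 = k(v)\cap l^{\mathrm{nr}}$ is ramified or unramified over $k$) subsumes the paper's three cases, and you correctly identify the one genuinely delicate configuration, $\ell\mid[k_0:k]$: there every degree-$\ell$ ramified $k'$ satisfies $\mathrm{Gal}(l^{\mathrm{nr}}/k')\cap\mathrm{Gal}(l^{\mathrm{nr}}/k_0)\subset e\hat{\mathbb{Z}}\times\{0\}$, and one really needs a ramified $k'$ with $[k':k]=e\ell$ (the ``graph'' subgroup you describe) to recover the inertia direction. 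In short, the paper's proof is more hands-on (explicit uniformizers, Lubin--Tate generators), while yours trades that for the existence theorem of LCFT and a cleaner group-theoretic endgame; both are valid. Two small remarks: the reduction you attribute to ``Artin reciprocity'' really uses the full existence theorem (Artin reciprocity alone gives the isomorphism $k(v)^\times/N \cong \mathrm{Gal}$, not the lattice correspondence for an infinite family); and ``uniquely divisible'' for $U_2^1(K_v)$ is more than you need and more than the paper asserts --- plain divisibility suffices for the norm surjectivity you use.
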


\begin{proof}
Three different cases arise:
\begin{enumerate}
    \item the field $k(v)$ contains $l$;
        \item the extension $lk(v)/k(v)$ is unramified of degree $\ell$;
            \item the extension $lk(v)/k(v)$ is totally ramified of degree $\ell$.
\end{enumerate}

Case 1 is trivial, since:
$$\K(K_v) =  N_{lK \otimes_{K} K_v/K_v}(\K(lK \otimes_{K} K_v)).$$

Let us now consider case 2, and denote by $k(v)_{\mathrm{nr}}$ the maximal unramified subextension of $k(v)/k$. By Lemma \ref{ramunram}, since $lk(v)_{\mathrm{nr}}/k(v)_{\mathrm{nr}}$ is a Galois totally ramified extension of degree $\ell$ and $k(v)/k(v)_{\mathrm{nr}}$ is a totally ramified extension such that $k(v)l/k(v)$ is unramified, there exists a finite extension $m$ of $k(v)_{\mathrm{nr}}$ such that $m \in \mathcal{E}^0_{lk(v)_{\mathrm{nr}}/k(v)_{\mathrm{nr}}} \subset \mathcal{E}^0_{l/k}$ and $m \subset k(v)$. By setting $M:=mK$, we get that $M \in \mathcal{E}_{l/k}$ and that:
\begin{align*} 
\K(K_v)&=N_{M\otimes_K K_v/K_v}(\K(M\otimes_K K_v)) \\&\subset \langle N_{K'\otimes_{K} K_v/K_v}(\K(K'\otimes_{K} K_v)) \, |\, K'\in \mathcal{E}_{l/k} \rangle,
\end{align*}
as wished. \\

Let us finally consider case 3. To do so, fix a uniformizer $\pi$ of $k(v)$, and as before, let $k(v)_{\mathrm{nr}}$ be the maximal unramified subextension of $k(v)/k$. Denote by $k(v)_{\pi}^{\mathrm{ram}}$ the maximal abelian totally ramified extension of $k(v)$ associated to $\pi$ by Lubin-Tate theory. Since $l/k$ is abelian, the extension $lk(v)_{\pi}^{\mathrm{ram}}/k(v)_{\pi}^{\mathrm{ram}}$ must be unramified. Hence, by Lemma \ref{ramunram}, there exists a finite extension $m$ of $k(v)_{\mathrm{nr}}$ such that $m \in \mathcal{E}^0_{lk(v)_{\mathrm{nr}}/k(v)_{\mathrm{nr}}} \subset \mathcal{E}^0_{l/k}$ and $m \subset k(v)_{\pi}^{\mathrm{ram}}$. We deduce from Corollary 5.12 of \cite{Yoshida} that: 
$$    \pi \in N_{m \otimes_{k(v)_{\mathrm{nr}}} k(v)}((m \otimes_{k(v)_{\mathrm{nr}}} k(v))^\times)\subset \langle N_{k'\otimes_k k(v)/k(v)}((k'\otimes_k k(v))^\times) \, |\, k'\in \mathcal{E}^0_{l/k} \rangle.$$
This being true for every uniformizer $\pi$ of $k(v)$, we deduce that:
$$    k(v)^\times \subset \langle N_{k'\otimes_k k(v)/k(v)}((k'\otimes_k k(v))^\times) \, |\, k'\in \mathcal{E}^0_{l/k} \rangle,$$
and hence, by Lemma \ref{lemloc}:
$$
\K(K_v)= \langle N_{K'\otimes_{K} K_v/K_v}(\K(K'\otimes_{K} K_v)) \, |\, K'\in \mathcal{E}_{l/k} \rangle.$$
\end{proof}

\subsubsection{Step 2: Computation of a Tate-Shafarevich group}

The second step, which is slightly technical, consists in computing the Tate-Shafarevich groups of some finitely generated free Galois modules over $K$ associated to the fields in $\mathcal{E}_l$. Poitou-Tate duality will then allow us to obtain a local-global principle that will let us deduce Theorem \ref{extell} from Proposition \ref{localextell}.

\begin{proposition}\label{shadiv2}
Assume that $C$ has a rational point, and let $\ell$ be a prime number. Fix a finite Galois totally ramified extension $l/k$ of degree $\ell$. Given $K_1,\ldots,K_r$ in $\mathcal{E}_{l/k}$ so that the fields $K_1$ and $K_2$ are linearly disjoint over $K$, consider the Galois module $\hat{T}$ defined by the following exact sequence:
\begin{equation}\label{hat2}
0 \rightarrow  \mathbb{Z}\rightarrow \mathbb{Z}[E/K] \rightarrow  \hat{T} \rightarrow 0,
  \end{equation}
where $E:= K_1 \times \cdots \times K_r$. Then $\Sha^2(K,\hat{T})$ is divisible.
\end{proposition}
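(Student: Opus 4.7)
The plan is to adapt the diagram chase in the proof of Proposition~\ref{shadiv} to the present setting, with the pair $(K_1,K_2)$ and their compositum $K_{12}:=K_1K_2$ playing the roles of $(L,K_{\mathcal I})$ and $LK_{\mathcal I}$, and then to invoke the rational point on $C$ to kill the remaining contribution. As a preliminary, since $l/k$ is Galois totally ramified of prime degree, $\mathrm{Gal}(l^{\mathrm{nr}}/k)\cong \mathrm{Gal}(l/k)\times\mathrm{Gal}(k^{\mathrm{nr}}/k)$ is abelian, so every $k_i\in\mathcal E^0_{l/k}$ gives an abelian Galois extension $K_i/K$. Linear disjointness of $K_1,K_2$ therefore forces $K_{12}/K$ to be abelian Galois with group $\mathrm{Gal}(K_1/K)\times \mathrm{Gal}(K_2/K)$.

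The first step is to show that $\Sha^2(K_1,\hat T)$, $\Sha^2(K_2,\hat T)$ and $\Sha^2(K_{12},\hat T)$ are all divisible. Indeed, the direct summand $\Z[K_i/K]$ of $\Z[E/K]$, restricted to $\mathrm{Gal}(\bar K/K_i)$, contains a canonical trivial $\Z$-summand (the one corresponding to the identity embedding $K_i\hookrightarrow K_i$) onto which the injection $\Z\hookrightarrow \Z[E/K]$ projects isomorphically. Hence \eqref{hat2} splits over $K_i$, exhibiting $\hat T|_{K_i}$ as a direct summand of a permutation module; by Shapiro's lemma and the divisibility of $\Sha^2(-,\Z)$ over finite extensions of $K$ recalled at the end of Section~\ref{sec prel}, the group $\Sha^2(K_i,\hat T)$ is divisible. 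The same argument applies over $K_{12}$.

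I would then carry out the Mayer--Vietoris-style diagram chase from the proof of Proposition~\ref{shadiv}, with $(L,K_{\mathcal I},LK_{\mathcal I})$ replaced by $(K_1,K_2,K_{12})$, to produce an exact sequence
\begin{equation*}
0 \to \ker_1 \to \Sha^2(K,\hat T) \to \ker_{\mathrm{MV}} \to 0,
\end{equation*}
where $\ker_1=\ker\!\bigl(\Sha^2(K,\hat T)\to \Sha^2(K_1,\hat T)\oplus \Sha^2(K_2,\hat T)\bigr)$ and $\ker_{\mathrm{MV}}$ is the kernel of the Mayer--Vietoris difference $\mathrm{Res}_{K_{12}/K_1}-\mathrm{Res}_{K_{12}/K_2}$ on $\Sha^2(K_1,\hat T)\oplus \Sha^2(K_2,\hat T)$. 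A direct analysis using the splittings of \eqref{hat2} over $K_1$, $K_2$, $K_{12}$ shows that $\ker_{\mathrm{MV}}$ is divisible (this is the same structural observation that appears in the remark following the statement), so divisibility of $\Sha^2(K,\hat T)$ reduces to the vanishing of $\ker_1$.

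The heart of the argument, and the main obstacle, is showing $\ker_1=0$; this is where the hypothesis $C(k)\neq\emptyset$ must be used, since the remark following the statement shows that $\ker_1$ is generally non-trivial without it. By inflation--restriction, $\ker_1$ is a subquotient of $H^2(G,\hat T)$ for $G=\mathrm{Gal}(K_{12}/K)=\mathrm{Gal}(K_1/K)\times \mathrm{Gal}(K_2/K)$, and the vanishing of the restrictions $\mathrm{Res}_{K_1/K}(x)$ and $\mathrm{Res}_{K_2/K}(x)$ for $x\in\ker_1$ pins down which K\"unneth components of $H^2(G,\hat T)$ can contribute. The rational point $v_0\in C(k)$ provides a place with $K_{v_0}=k((t))$ at which each $K_i\otimes_KK_{v_0}\cong k_i((t))$ realizes the full extension $k_i/k$, so the decomposition group in $G$ at $v_0$ is all of $G$; the vanishing of $x$ at $v_0$ (since $x\in\Sha^2$) together with the K\"unneth constraints then forces $x=0$. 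Making this last step fully rigorous --- computing $H^2(G,\hat T)$ via K\"unneth in terms of the components for each factor of $G$, and tracking how the combination of the two global $\mathrm{Res}_{K_i/K}$ vanishings and the local triviality at $v_0$ kills every component --- will require a careful case analysis based on the specific abelian structure of $G$, and is the hardest part of the proof.
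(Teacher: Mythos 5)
Your structural setup — the $3\times3$ diagram with the pair $(K_1,K_2)$ and compositum $K_1K_2$, the observation that \eqref{hat2} splits over $K_1$, $K_2$ and $K_1K_2$ so those $\Sha^2(\cdot,\hat T)$ groups are divisible, and the use of the rational point to control restriction along $K_{\mathcal I}/K$ — matches the paper. But your concluding logic is different, and it has a genuine gap. You want to establish the short exact sequence
\[
0\to \ker_1 \to \Sha^2(K,\hat T)\to \ker_{\mathrm{MV}}\to 0,
\]
prove $\ker_1=0$, and deduce divisibility of $\Sha^2(K,\hat T)$ from divisibility of $\ker_{\mathrm{MV}}$. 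The problem is that you never actually argue that $\ker_{\mathrm{MV}}$ is divisible: a kernel of a map between divisible groups need not be divisible (think of $\mathbb{Q}\to\mathbb{Q}/\mathbb{Z}$), so the fact that $\Sha^2(K_1,\hat T)\oplus\Sha^2(K_2,\hat T)$ is divisible does not suffice. Invoking the Remark after Theorem \ref{extell} is circular, since that remark is stated as a consequence of (following the proof of) the very Proposition you are proving. The paper avoids this entirely: it runs the diagram chase to show that $\psi_0:\Sha^2(K,\mathbb{Z}[E/K])\to\Sha^2(K,\hat T)$ is \emph{surjective}, so $\Sha^2(K,\hat T)$ is a quotient of a divisible group and hence divisible, with no need to understand $\ker_{\mathrm{MV}}$ at all.

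Two secondary issues. First, for the chase to go through you need the exactness of the middle row and the surjectivity $\Sha^2(K_1,\mathbb{Z})\oplus\Sha^2(K_2,\mathbb{Z})\twoheadrightarrow\Sha^2(K_1K_2,\mathbb{Z})$ in the top row; both rest on Lemma~\ref{res1} (where the rational point enters a second time, via $\ires(C)=1$), which you do not mention. Second, your route to $\ker_1=0$ via a K\"unneth decomposition of $H^2(G,\hat T)$ is far heavier than needed (and you yourself flag it as incomplete); the paper's argument is a short inflation-restriction diagram: since $K_{\mathcal I}/K$ comes by base change from $k_{\mathcal I}/k$ and $v\in C(k)$, the inflation map $H^2(K_{\mathcal I}/K,\hat T)\to H^2(K_{\mathcal I,v}/K_v,\hat T)$ is an isomorphism, whence $\Sha^2(K,\hat T)\hookrightarrow\Sha^2(K_{\mathcal I},\hat T)$, and this factors through each $\mathrm{Res}_{K_i/K}$, giving injectivity of $f_0$ directly without any K\"unneth analysis.
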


\begin{proof}
Consider the following complex:
\begin{equation}\label{keyseq2}
\xymatrix @R=.3pc{
\Sha^2(K,\hat{T}) \ar[r]^-{f_0}& \Sha^2(K_1,\hat{T})\oplus \Sha^2(K_{2},\hat{T}) \ar[r]^-{g_0}& \Sha^2(K_1K_2,\hat{T})\\
x \ar@{|->}[r]& (\mathrm{Res}_{K_1/K}(x),\mathrm{Res}_{K_{2}/K}(x))&\\
&(x,y) \ar@{|->}[r] & \mathrm{Res}_{K_1K_2/K_1}(x) - \mathrm{Res}_{K_1K_2/K_2}(y).
}
\end{equation}

We start by proving the following lemma:

\begin{lemma}
The morphism $f_0$ is injective.
\end{lemma}

\begin{proof}
Let $K_{\mathcal{I}}$ be the Galois closure of the composite of all the $K_i$'s. By inflation-restriction, there is an exact sequence:
$$0 \rightarrow H^2(K_{\mathcal{I}}/K,\hat{T}) \rightarrow H^2(K,\hat{T}) \rightarrow H^2(K_{\mathcal{I}},\hat{T}).$$
Take $v\in C(k)$ a rational point. Since the extension $K_{\mathcal{I}}/K$ is obtained by base change from an extension $k_{\mathcal{I}}$ of $k$, we have the equalities $\mathrm{Gal}(K_{\mathcal{I}}/K)=\mathrm{Gal}(k_{\mathcal{I}}/k)=\mathrm{Gal}(K_{\mathcal{I},v}/K_v)$. The previous inflation-restriction exact sequence therefore induces a commutative diagram with exact lines:
\[\xymatrix{
    0 \ar[r] & H^2(K_{\mathcal{I}}/K,\hat{T}) \ar[d]^{\cong}\ar[r] & H^2(K,\hat{T}) \ar[d]\ar[r] & H^2(K_{\mathcal{I}},\hat{T})\ar[d]\\
    0 \ar[r] & H^2(K_{\mathcal{I},v}/K_v,\hat{T}) \ar[r] & H^2(K_v,\hat{T}) \ar[r] & H^2(K_{\mathcal{I},v},\hat{T})
    }\]
in which the first vertical map is an isomorphism. We deduce that the restriction map:
$$\ker \left(  H^2(K,\hat{T})\rightarrow H^2(K_v,\hat{T})\right) \rightarrow \ker \left(  H^2(K_{\mathcal{I}},\hat{T})\rightarrow H^2(K_{\mathcal{I},v},\hat{T})\right)$$
is injective. Hence so is the restriction map:
$$\mathrm{Res}_{K_{\mathcal{I}}/K}:\Sha^2(K,\hat{T}) \rightarrow \Sha^2(K_{\mathcal{I}},\hat{T})$$
as well as the restriction maps:
\begin{gather*}
\mathrm{Res}_{K_1/K}:\Sha^2(K,\hat{T}) \rightarrow \Sha^2(K_1,\hat{T}),\\
\mathrm{Res}_{K_2/K}\Sha^2(K,\hat{T}) \rightarrow \Sha^2(K_2,\hat{T}),
\end{gather*}
since the former factors through these.
\end{proof}

Now observe that the complex \eqref{keyseq2} fits in the following commutative diagram, in which the first and second rows are obtained in the same way as the third:
\begin{equation}\label{diagram3}
\xymatrix{
& 0\ar[d] & 0\ar[d]\\
 \Sha^2(K,\mathbb{Z}) \ar[r]\ar[d]& \Sha^2(K_1,\mathbb{Z})\oplus \Sha^2(K_2,\mathbb{Z}) \ar@{->>}[r]\ar[d]& \Sha^2(K_1K_2,\mathbb{Z})\ar[d]\\
 \Sha^2(K, \mathbb{Z}[E/K]) \ar[r]^<<<<<{f} \ar[d]& \Sha^2(K_1, \mathbb{Z}[E/K])\oplus \Sha^2(K_2, \mathbb{Z}[E/K]) \ar[r]^<<<<<{g}\ar[d]& \Sha^2(K_1K_2, \mathbb{Z}[E/K])\ar[d]\\
 \Sha^2(K,\hat{T}) \ar@{^{(}->}[r]^>>>>>>>>>>>>>>>>{f_0}& \Sha^2(K_1,\hat{T})\oplus \Sha^2(K_2,\hat{T}) \ar[r]^>>>>>>>>>>>{g_0}\ar[d]& \Sha^2(K_1K_2,\hat{T})\ar[d]\\
 & 0 & 0.
}
\end{equation}
The second and third columns are exact since the exact sequence (\ref{hat2}) splits over $K_1$, $K_2$ and $K_1K_2$.  The lines are all complexes. In the first one, the second arrow is surjective since the restriction map:
$$ \Sha^2(K_1,\mathbb{Z}) \rightarrow \Sha^2(K_1K_2,\mathbb{Z})$$
is an isomorphism by Lemma \ref{res1} and $C$ has a rational point. As for the second line, we have the following lemma:

\begin{lemma}
The second line of diagram \eqref{diagram3} is exact.
\end{lemma}

\begin{proof}
For $1 \leq \alpha \leq r$, write:
\begin{align*}    
K_1 \otimes_K K_\alpha &= \prod_\beta L_{\alpha\beta}\\
    K_2 \otimes_K K_\alpha &= \prod_\gamma M_{\alpha\gamma}\\
    L_{\alpha\beta}\otimes_{K_\alpha} M_{\alpha\gamma} &= \prod_{\delta} N_{\alpha\beta\gamma\delta}
\end{align*} 
for some fields $L_{\alpha\beta}$, $M_{\alpha\gamma}$ and $N_{\alpha\beta\gamma\delta}$. By Shapiro's Lemma, the second line of \eqref{diagram3} can be identified with the following complex:
\[\xymatrix{
 \bigoplus_\alpha \Sha^2(K_\alpha,\mathbb{Z}) \ar[d]^f \\
\bigoplus_{\alpha,\beta} \Sha^2(L_{\alpha\beta},\mathbb{Z}) \oplus \bigoplus_{\alpha,\gamma} \Sha^2(M_{\alpha\gamma},\mathbb{Z}) \ar[d]^g \\
\bigoplus_{\alpha,\beta,\gamma,\delta} \Sha^2(N_{\alpha\beta\gamma\delta},\mathbb{Z}) 
}\]
where $f$ is given by:
$$(x_\alpha) \mapsto \left( \left(\mathrm{Res}_{L_{\alpha\beta}/K_\alpha}(x_\alpha)\right)_{\alpha\beta},  \left(\mathrm{Res}_{M_{\alpha\gamma}/K_\alpha}(x_\alpha)\right)_{\alpha\gamma}\right),$$
and $g$:
$$\left((y_{\alpha\beta})_{\alpha,\beta},(z_{\alpha\gamma})_{\alpha,\gamma})\right) \mapsto \left(\mathrm{Res}_{N_{\alpha\beta\gamma\delta}/L_{\alpha\beta}}(y_{\alpha\beta})-\mathrm{Res}_{N_{\alpha\beta\gamma\delta}/M_{\alpha\gamma}}(z_{\alpha\gamma})\right)_{\alpha\beta\gamma\delta},$$
Fix $\left((y_{\alpha\beta})_{\alpha,\beta},(z_{\alpha\gamma})_{\alpha,\gamma})\right) \in \ker (g)$. Then:
$$\mathrm{Res}_{N_{\alpha\beta\gamma\delta}/L_{\alpha\beta}}(y_{\alpha\beta})=\mathrm{Res}_{N_{\alpha\beta\gamma\delta}/M_{\alpha\gamma}}(z_{\alpha\gamma})$$
for all $\alpha,\beta,\gamma,\delta$. But the restrictions $\mathrm{Res}_{L_{\alpha\beta}/K_{\alpha}}$, $\mathrm{Res}_{M_{\alpha\gamma}/K_{\alpha}}$, $\mathrm{Res}_{N_{\alpha\beta\gamma\delta}/L_{\alpha\beta}}$ and $\mathrm{Res}_{N_{\alpha\beta\gamma\delta}/M_{\alpha\gamma}}$ are all isomorphisms by Lemma \ref{res1} and they fit into a commutative diagram:
\[\xymatrix{
\Sha^2(K_\alpha,\mathbb{Z})\ar[rr]^-{\mathrm{Res}_{L_{\alpha\beta}/K_{\alpha}}}\ar[d]_-{\mathrm{Res}_{M_{\alpha\gamma}/K_{\alpha}}} && \Sha^2(L_{\alpha\beta},\mathbb{Z})\ar[d]^-{\mathrm{Res}_{N_{\alpha\beta\gamma\delta}/L_{\alpha\beta}}}\\
\Sha^2(M_{\alpha\gamma},\mathbb{Z})\ar[rr]_-{\mathrm{Res}_{N_{\alpha\beta\gamma\delta}/M_{\alpha\gamma}}} && \Sha^2(N_{\alpha\beta\gamma\delta},\mathbb{Z}).
}\]
We deduce that, for each $\alpha$, there exists $x_{\alpha}\in \Sha^2(K_\alpha,\mathbb{Z})$ such that:
\begin{gather*}
    \forall \beta,\;\; \mathrm{Res}_{L_{\alpha\beta}/K_{\alpha}}(x_{\alpha})=y_{\alpha\beta},\\
    \forall \gamma,\;\; \mathrm{Res}_{M_{\alpha\gamma}/K_{\alpha}}(x_{\alpha})=z_{\alpha\gamma}.
\end{gather*}
In other words, $\left((y_{\alpha\beta})_{\alpha,\beta},(z_{\alpha\gamma})_{\alpha,\gamma})\right) \in \im (f)$.
\end{proof}

With all the gathered information, a simple diagram chase in \eqref{diagram3} shows that the morphism $ \Sha^2(K,\mathbb{Z}[E/K]) \rightarrow \Sha^2(K,\hat{T}) $ is surjective. But as recalled at the end of section \ref{sec prel}, the group $\Sha^2(K, \mathbb{Z}[E/K])$ is divisible. Hence so is $\Sha^2(K,\hat{T}) $.
\end{proof}

\subsubsection{Step 3: Proof of Theorem \ref{extell}}

We can finally prove Theorem \ref{extell} by using Poitou-Tate duality.

\begin{proof}[Proof of Th. \ref{extell}]
Take $x\in \K(K)$. By Proposition \ref{localextell}, we have:
$$\K(K_v) = \langle N_{K'\otimes_{K} K_v/K_v}(\K(K'\otimes_{K} K_v)) \, |\, K'\in \mathcal{E}_{l/k} \rangle$$
for all $v \in C^{(1)}$. Hence we can find $K_1,\ldots,K_r \in \mathcal{E}_{l/k}$ such that:
\begin{multline}\label{xappart}
x \in \ker ( \K(K)/\langle N_{K_i/K}(\K(K_i)) \; | \; 1\leq i \leq r \rangle  \\ \rightarrow \prod_{v\in C^{(1)}} \K(K_v)/\langle N_{K_i\otimes_K K_v/K_v}(\K(K_i\otimes_K K_v)) \; | \; 1\leq i \leq r \rangle).
\end{multline}
Moreover, up to enlarging the family $(K_i)_i$, we may and do assume that $K_1$ and $K_2$ are linearly disjoint. Consider the étale $K$-algebra $E:= K_1 \times \cdots \times K_r$ and the Galois module $\hat{T}$ defined by the following exact sequence:
\begin{equation*}
0 \rightarrow  \mathbb{Z}\rightarrow \mathbb{Z}[E/K] \rightarrow  \hat{T} \rightarrow 0.
  \end{equation*}
 Set $\check{T}:=\mathrm{Hom}(\hat{T},\mathbb{Z})$ and $T:= \check{T} \otimes \mathbb{Z}(2)$. By Lemma \ref{abstractlemma}, equation \eqref{xappart} can be rewritten as:
 $$x \in \Sha^3(K, T).$$
 But, by Poitou-Tate duality, $\Sha^3(K,T)$ is dual to $\overline{\Sha^2(K,\hat{T})}$, and by Proposition \ref{shadiv2}, the group $\Sha^2(K,\hat{T})$ is divisible. We deduce that $\Sha^3(K,T)=0$, and hence that: $$x \in \langle N_{K_i/K}(\K(K_i)) \; | \; 1\leq i \leq r \rangle \subset \langle N_{K'/K}(\K(K')) \, |\, K'\in \mathcal{E}_{l/k} \rangle.$$
\end{proof}

\subsection{Proof of Main Theorem \ref{MTB}}

By combining Theorems \ref{corint} and \ref{extell}, we can now settle the following theorem, from which we will deduce Main Theorem \ref{MTB}:

\begin{theorem}\label{endth}
Let $K$ be the function field of a smooth projective curve $C$ defined over a $p$-adic field $k$. Let $l/k$ be a finite Galois extension and set $L:=lK$. Let $Z$ be a proper $K$-variety. If $s_{l/k}$ stands for the number of (not necessarily distinct) prime factors of the ramification degree of $l/k$, then the quotient:
$$\K(K)/\langle N_{L/K}(\K(L)), N_2(Z/K) \rangle$$
is $\iram(C)\cdot \chi_K(Z,E)^{2s_{l/k}+4}$-torsion for every coherent sheaf $E$ on $Z$.
\end{theorem}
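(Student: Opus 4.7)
Our plan is to prove Theorem \ref{endth} by induction on $s := s_{l/k}$, combining Theorem \ref{corint} (unramified case) with Theorem \ref{extell} (totally ramified prime-degree case). First, a variant of the dévissage in Proposition \ref{dev}---modified to accommodate the multiplicative constant $\iram(C)$ by replacing $m_X$ by $m_X/\gcd(m_X,\iram(C))$ throughout---reduces the problem to the case of smooth integral $Z$. Next, a standard restriction-corestriction argument using closed points of $C$ of residue degree $1$ (combined with Proposition \ref{resid} for the residual index) reduces us further to the case where $C$ has a $k$-rational point, at the cost of the multiplicative factor $\iram(C)$. It therefore suffices to prove that, under these additional hypotheses, the quotient in question is $\chi^{2s+4}$-torsion, where $\chi := \chi_K(Z,\mathcal{O}_Z)$.

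The base case $s=0$ is directly handled by Theorem \ref{corint}, which gives that the quotient is $\chi^2$-torsion, hence \emph{a fortiori} $\chi^4$-torsion. For the inductive step $s\rightsquigarrow s+1$, we pick a Galois subextension $l'\subset l$ such that $l/l'$ is totally ramified Galois of prime degree $\ell$---this exists because the inertia subgroup of $\mathrm{Gal}(l/k)$ is solvable and admits a composition series with cyclic prime-order quotients, each corresponding to a totally ramified Galois extension. Setting $L':=l'K$, the inductive hypothesis applied to $l'/k$ gives that $\K(K)/\langle N_{L'/K}(\K(L')), N_2(Z/K)\rangle$ is $\chi^{2s+4}$-torsion. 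The crux of the induction is the following \emph{key lemma}:
\[
\K(L')/\langle N_{L/L'}(\K(L)), N_2(Z_{L'}/L')\rangle\ \ \text{is}\ \ \chi^2\text{-torsion}.
\]
Granted this, for $x\in\K(K)$, writing $\chi^{2s+4}x=N_{L'/K}(y)+z$ and applying the key lemma to $y$ gives $\chi^2 y=N_{L/L'}(y')+z'$; pushing forward by $N_{L'/K}$ and using that $N_{L'/K}(N_2(Z_{L'}/L'))\subseteq N_2(Z/K)$ yields $\chi^{2(s+1)+4}x\in\langle N_{L/K}(\K(L)), N_2(Z/K)\rangle$, completing the induction.

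The main obstacle will be the proof of the key lemma. Since $C$ has a $k$-rational point, so does $C_{l'}$, and Theorem \ref{extell} applies over $L'$ to the totally ramified Galois prime-degree extension $L/L'$, yielding $\K(L')=\langle N_{M/L'}(\K(M)) : M\in\mathcal{E}_{l/l'}\rangle$. Each $M=mL'$ (with $m\in\mathcal{E}^0_{l/l'}$) falls in one of two cases: if $M\supseteq L$, then $N_{M/L'}(\K(M))\subseteq N_{L/L'}(\K(L))$ and no torsion factor is needed; otherwise, a Galois-theoretic analysis of $\mathrm{Gal}(l\cdot l'^{\mathrm{nr}}/l')\cong\mathrm{Gal}(l/l')\times\mathrm{Gal}(l'^{\mathrm{nr}}/l')$ shows that any ramified degree-$\ell$ subextension $m$ not containing $l$ satisfies $ml=l\cdot n_\ell$, where $n_\ell$ denotes the degree $\ell$ unramified extension of $l'$, and consequently $ML/M$ is unramified of degree $\ell$. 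Applying Theorem \ref{corint} over $M$ to this unramified extension gives $\chi^2\K(M)\subseteq N_{ML/M}(\K(ML))+N_2(Z_M/M)$, and pushing forward by $N_{M/L'}$ (using $N_{ML/L'}=N_{L/L'}\circ N_{ML/L}$) yields $\chi^2 N_{M/L'}(\K(M))\subseteq N_{L/L'}(\K(L))+N_2(Z_{L'}/L')$. Summing over all $M$ yields the key lemma. The hard part is thus the precise Galois-theoretic identification of the ``twisted'' subextensions and the verification that $ML/M$ is always unramified, which is exactly what permits Theorem \ref{corint} to contribute the $\chi^2$-factor at each inductive step.
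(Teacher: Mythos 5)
Your proof takes a ``top-down'' inductive approach, while the paper's argument is ``bottom-up,'' and this discrepancy introduces a genuine gap. You begin your inductive step by choosing a Galois subextension $l' \subset l$ with $l/l'$ totally ramified Galois of prime degree $\ell$, and then apply the inductive hypothesis to $l'/k$ --- but the inductive hypothesis (Theorem \ref{endth} with a smaller $s$) requires $l'/k$ to be Galois, i.e.\ $\mathrm{Gal}(l/l')$ must be normal in $\mathrm{Gal}(l/k)$, not merely in the inertia subgroup $I$. Your justification (``the inertia subgroup\ldots admits a composition series with cyclic prime-order quotients'') only produces a subgroup of prime order that is normal in the \emph{next term} of the composition series, not in all of $\mathrm{Gal}(l/k)$. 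There is no reason for $I$ to contain a subgroup of prime order normal in the full group: a minimal $\mathrm{Gal}(l/k)$-normal subgroup inside the wild inertia is elementary abelian, but may have $\mathbb{F}_p$-rank greater than one, on which the Frobenius and tame inertia can act irreducibly. The paper avoids this entirely by going the other way: it first passes to $L_{\mathrm{nr}} = l_{\mathrm{nr}} K$ via Theorem \ref{corint}, then picks $m$ with $l_{\mathrm{nr}} \subset m \subset l$ and $m/l_{\mathrm{nr}}$ Galois totally ramified of prime degree. This only requires a normal subgroup of $I$ of prime \emph{index} --- always available since $I$ is solvable. Theorem \ref{extell} is then applied to $m/l_{\mathrm{nr}}$ over the base $L_{\mathrm{nr}}$, and the inductive hypothesis to each $lk'/k'$, $k' \in \mathcal{E}^0_{m/l_{\mathrm{nr}}}$, which are automatically Galois because $l/k$ is.

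A secondary issue is your reduction to the case where $C$ has a rational point ``at the cost of the multiplicative factor $\iram(C)$,'' which is both underspecified and in tension with your exponent bookkeeping. Closed points of $C$ of residue degree $1$ \emph{are} rational points, so their existence is not implied by $\ires(C)=1$ or by Proposition \ref{resid}. The paper's reduction passes through extensions $K_i = k_iK$ over which $C$ has a rational point, handling the unramified part of $k_i/k$ by Theorem \ref{corint} (costing $\chi^2$) and the ramified part by restriction-corestriction (costing $\iram(C)$); the net cost is $\iram(C)\cdot\chi^2$, not $\iram(C)$. Since the theorem's exponent is $\iram(C)\cdot\chi^{2s+4}$, this means you should prove the sharper bound $\chi^{2s+2}$ in the rational-point case. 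Your induction actually supports that: you only get $\chi^{2s+4}$ because you artificially weaken the base case from $\chi^2$ to $\chi^4$.

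Otherwise your key lemma and its proof are sound and take a genuinely different route from the paper: you apply Theorem \ref{extell} to $l/l'$ over $L'$ and use Theorem \ref{corint} to handle the ``twisted'' fields $M \not\supseteq L$, via the correct observation that $ML/M$ is unramified; the paper applies Theorem \ref{extell} to $m/l_{\mathrm{nr}}$ over $L_{\mathrm{nr}}$ and reduces to the induction via $lk'/k'$. The initial d\'evissage step you add is unnecessary, since Theorem \ref{corint} already covers all proper varieties. The one essential repair is the Galois-ness of $l'/k$, and the natural fix is to adopt the paper's bottom-up structure.
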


\begin{proof}
We first assume that $C$ has a rational point, and we prove that
$$\K(K)/\langle N_{L/K}(\K(L)), N_2(Z/K) \rangle$$
is $\chi_K(Z,E)^{2s_{l/k}+2}$-torsion for every coherent sheaf $E$ on $Z$ by induction on $s_{l/k}$. The case $s_{l/k}=0$ immediately follows from Theorem \ref{corint}. We henceforth assume now that $s_{l/k}>0$. Let $l_{\mathrm{nr}}$ be the maximal unramified subextension of $l/k$ and set $L_{\mathrm{nr}}:=l_{\mathrm{nr}}K$. Theorem \ref{corint} ensures then that the quotient:
$$\K(K)/\langle N_{L_{\mathrm{nr}}/K}(\K(L_{\mathrm{nr}})), N_2(Z/K) \rangle$$
is $\chi_K(Z,E)^2$-torsion. Now, the extension $l/l_{\mathrm{nr}}$ is Galois and totally ramified. Since finite extensions of local fields are solvable, we can find a Galois totally ramified extension $m/l_{\mathrm{nr}}$ contained in $l$ and of prime degree $\ell$. Set $M:=mK$. By Theorem \ref{extell}, we have:
 $$\K(L_{\mathrm{nr}})=\langle N_{K'/L_{\mathrm{nr}}}(\K(K')) \, |\, K'\in \mathcal{E}_{m/l_{\mathrm{nr}}} \rangle.$$
 But for each $k' \in \mathcal{E}^0_{m/l_{\mathrm{nr}}}$, the ramification degree of $lk'/k'$ strictly divides that of $l/k$. Hence, by induction, the group:
 $$\K(K')/\langle N_{LK'/K'}(\K(LK')), N_2(Z/K') \rangle$$
is $\chi_K(Z,E)^{2s_{l/k}}$-torsion for each $K'\in \mathcal{E}_{m/l_{\mathrm{nr}}}$. We deduce that:
$$\K(K)/\langle N_{L/K}(\K(L)), N_2(Z/K) \rangle$$
is $\chi_K(Z,E)^{2s_{l/k}+2}$-torsion, which finishes the induction.\\

We do not assume anymore that $C$ has a rational point. Let $k_1,\ldots,k_r$ be finite extensions of $k$ over which $C$ acquires rational points and such that the g.c.d.'s of their ramification degrees is $\iram(C)$. For each $i$, let $k_{i,\mathrm{nr}}$ be the maximal unramified extension of $k$ contained in $k_i$, and set $K_i:=k_iK$ and $K_{i,\mathrm{nr}}:=k_{i,\mathrm{nr}}K$. Theorem \ref{corint} ensures that the quotient:
$$\K(K)/\langle N_{K_{i,\mathrm{nr}}/K}(\K(K_{i,\mathrm{nr}})), N_2(Z/K) \rangle$$
is $\chi_K(Z,E)^2$-torsion. Moreover, a restriction-corestriction argument shows that the quotient:
$$\K(K_{i,\mathrm{nr}})/N_{K_{i}/K_{i,\mathrm{nr}}}(\K(K_{i})) $$
is $[k_i:k_{i,\mathrm{nr}}]$-torsion. Since $[k_i:k_{i,\mathrm{nr}}]$ is the ramification degree of $k_i/k$, we deduce that:
$$\K(K)/\langle N_{K_{1}/K}(\K(K_{1})), \ldots , N_{K_{r}/K}(\K(K_{r})), N_2(Z/K) \rangle$$
is $\iram(C)\cdot \chi_K(Z,E)^2$-torsion. But $C$ has rational points over all the $k_i$'s. Hence, by the first case, the quotients:
$$\K(K_i)/\langle N_{LK_i/K_i}(\K(LK_i)), N_2(Z/K_i) \rangle$$
are all $\chi_K(Z,E)^{2s_{l/k}+2}$-torsion. We deduce that:
$$\K(K)/\langle N_{L/K}(\K(L)), N_2(Z/K) \rangle$$
is $\iram(C)\cdot \chi_K(Z,E)^{2s_{l/k}+4}$-torsion.
\end{proof}

Applying this to the context of the $C_1^2$-property, we get the following result:

\begin{corollary} \label{MTBbis}
Let $K$ be the function field of a smooth projective curve $C$ defined over a $p$-adic field $k$. Then, for each $n, d \geq 1$ and for each hypersurface $Z$ in $\mathbb{P}^n_{K}$ of degree $d$ with $d \leq n$, the quotient $\K(K)/N_2(Z/K)$ is killed by $\iram(C)$.
\end{corollary}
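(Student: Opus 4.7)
The plan is to deduce Corollary \ref{MTBbis} directly from Theorem \ref{endth} by choosing a finite Galois extension $l/k$ over which $Z$ already has a rational point, so that the norm subgroup $N_{L/K}(\K(L))$ is absorbed into $N_2(Z/K)$ and only the Euler-characteristic factor of Theorem \ref{endth} remains to be controlled.

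To produce such an $l$, I would invoke Tsen's theorem: the function field $\bar k(C)$ is $C_1$, so since $d \leq n$, the hypersurface $Z \subset \mathbb{P}^n_K$ admits a rational point over $\bar k(C) = \bigcup_{l/k \text{ finite}} lK$. By quasi-compactness such a point is already defined over $lK$ for some finite extension $l/k$, and replacing $l$ by its Galois closure over $k$ preserves $Z(lK) \neq \emptyset$; hence I may assume $l/k$ is finite Galois. Setting $L := lK$, the condition $Z(L) \neq \emptyset$ yields $N_{L/K}(\K(L)) \subseteq N_2(Z/K)$, and therefore
$$\langle N_{L/K}(\K(L)),\, N_2(Z/K) \rangle = N_2(Z/K).$$

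With this extension $l/k$ in hand, Theorem \ref{endth} applied to the coherent sheaf $E = \mathcal{O}_Z$ shows that $\K(K)/N_2(Z/K)$ is $\iram(C) \cdot \chi_K(Z, \mathcal{O}_Z)^{2s_{l/k}+4}$-torsion. It remains to compute $\chi_K(Z, \mathcal{O}_Z)$, which I would do exactly as in the proof of Main Theorem \ref{MTA}: the short exact sequence
$$0 \rightarrow \mathcal{O}_{\mathbb{P}^n_K}(-d) \rightarrow \mathcal{O}_{\mathbb{P}^n_K} \rightarrow i_*\mathcal{O}_Z \rightarrow 0,$$
combined with the vanishing $\chi_K(\mathbb{P}^n_K, \mathcal{O}_{\mathbb{P}^n_K}(-d)) = 0$ (Theorem III.5.1 of \cite{Hart}, valid precisely when $1 \leq d \leq n$), gives $\chi_K(Z, \mathcal{O}_Z) = 1 - 0 = 1$. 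Hence $\K(K)/N_2(Z/K)$ is $\iram(C)$-torsion, as claimed.

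No genuinely new obstacle arises here: all the substantive work is encapsulated in Theorem \ref{endth} together with the Euler-characteristic computation already carried out in the proof of Main Theorem \ref{MTA}. The only structural point is the use of Tsen's theorem to produce $l$, which is precisely where the Fano-type hypothesis $d \leq n$ is used.
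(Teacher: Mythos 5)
Your proof is correct and follows essentially the same route as the paper's: invoke Tsen's theorem to get a finite extension $l/k$ with $Z(lK)\neq\emptyset$, absorb $N_{L/K}(\K(L))$ into $N_2(Z/K)$, apply Theorem \ref{endth}, and compute $\chi_K(Z,\mathcal{O}_Z)=1$. You are in fact slightly more careful than the paper in noting that $l$ must be replaced by its Galois closure so that the hypothesis of Theorem \ref{endth} (that $l/k$ be Galois) is satisfied — a point the paper leaves implicit.
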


\begin{proof}
Let $Z$ be a hypersurface in $\mathbb{P}^n_{K}$ of degree $d$ with $d \leq n$. By Tsen's Theorem, the field $\overline{k}(C)$ is $C_1$. Since $d \leq n$, we deduce that there exists a finite extension $l$ of $k$ such that $Z(lK) \neq \emptyset$. By Theorem \ref{endth}, the quotient:
$$\K(K)/\langle N_{lK/K}(\K(lK)), N_2(Z/K) \rangle=\K(K)/N_2(Z/K) $$
is $\iram(C)\cdot \chi_K(Z,\mathcal{O}_Z)^{2s_{l/k}+4}$-torsion. But since $d \leq n$, Theorem III.5.1 of \cite{Hart} and the exact sequence
$$0\rightarrow \mathcal{O}_{\mathbb{P}^n_{K}}(-d)\rightarrow  \mathcal{O}_{\mathbb{P}^n_{K}} \rightarrow i_* \mathcal{O}_Z \rightarrow 0,$$
where $i: Z \rightarrow \mathbb{P}^n_{K}$ stands for the closed immersion, imply that:
$$\chi_K(Z,\mathcal{O}_Z)=\chi_K(\mathbb{P}^n_{K},\mathcal{O}_{\mathbb{P}^n_{K}})-\chi_K(\mathbb{P}^n_{K},\mathcal{O}_{\mathbb{P}^n_{K}}(-d))=  1.$$ Hence the quotient $\K(K)/  N_2(Z/K) $ is $\iram(C)$-torsion.
\end{proof}

Main Theorem \ref{MTB} can now be immediately deduced from the following corollary:

\begin{corollary} \label{MTBbisbis}
Let $K$ be the function field of a smooth projective curve $C$ defined over a $p$-adic field $k$. Assume that $\iram(C)=1$. Then, for each $n, d \geq 1$ and for each hypersurface $Z$ in $\mathbb{P}^n_{K}$ of degree $d$ with $d \leq n$, we have $N_2(Z/K)=\K(K)$.
\end{corollary}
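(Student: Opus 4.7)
The plan is essentially a one-line specialization of Corollary \ref{MTBbis}, which has just been proved. That corollary asserts that, under the very same hypotheses on $Z$, $n$ and $d$ (but without the restriction $\iram(C) = 1$), the quotient $\K(K)/N_2(Z/K)$ is killed by $\iram(C)$. Setting $\iram(C) = 1$ as assumed here, this quotient is killed by $1$ and therefore vanishes, giving the equality $N_2(Z/K) = \K(K)$.

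There is no genuine obstacle to overcome: all the substantive work has already been done in establishing Corollary \ref{MTBbis}, which in turn rests on Theorem \ref{endth}, Theorem \ref{extell} and Theorem \ref{corint}, together with the standard computation $\chi_K(Z,\mathcal{O}_Z) = 1$ (obtained from Theorem III.5.1 of \cite{Hart} and the exact sequence $0 \to \mathcal{O}_{\mathbb{P}^n_K}(-d) \to \mathcal{O}_{\mathbb{P}^n_K} \to i_*\mathcal{O}_Z \to 0$, using $d \leq n$ so that the sheaf $\mathcal{O}_{\mathbb{P}^n_K}(-d)$ has trivial Euler characteristic). The only purpose of the present corollary is to package the conclusion in the clean form needed to deduce Main Theorem \ref{MTB}: a smooth projective curve $C$ possessing a point in the maximal unramified extension $k^{\mathrm{nr}}$ of $k$ automatically satisfies $\iram(C) = 1$, since the residue field at such a point is an unramified extension of $k$ and hence contributes a ramification degree of $1$ to the gcd defining $\iram(C)$.
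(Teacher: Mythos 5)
Your proposal is correct and matches the paper's (implicit) reasoning exactly: Corollary~\ref{MTBbisbis} is stated in the paper without a separate proof, being the immediate specialization of Corollary~\ref{MTBbis} to the case $\iram(C)=1$, which is precisely what you observe. Your supplementary remark explaining how a point in $k^{\mathrm{nr}}$ forces $\iram(C)=1$ (so that Main Theorem~\ref{MTB} follows) is also correct and is the intended deduction.
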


\begin{remark}
By section 9.1 of \cite{BLR}, the assumption that $\iram(C)=1$ automatically holds when $C$ has an irreducible proper flat regular model whose special fibre has multiplicity 1.
\end{remark}

\end{document}